\documentclass[11pt]{article}

\def\UseSection{
        \numberwithin{equation}{section}
        \newtheorem{theorem}    {Theorem}[section]
        \DefineTheorems 
}
\usepackage[textwidth=500pt,textheight=650pt,centering]{geometry} 

\usepackage{amsfonts}
\usepackage{amsmath,amssymb,amsthm}
\usepackage{appendix}
\usepackage{bbm} 
\usepackage{amsbsy}
\usepackage{enumerate}
\usepackage{cite}

\usepackage[bookmarks, colorlinks, breaklinks,
pdftitle={},pdfauthor={}]{hyperref}

\hypersetup{
  linkcolor=black,
  citecolor=black,
  filecolor=black,
  urlcolor=black}

\usepackage{verbatim}
\usepackage{tikz}
\usepackage{color}

\newcommand{\black}{\black}

\usepackage[font=small,labelfont=bf]{caption}

\numberwithin{equation}{section}

\usetikzlibrary{calc,decorations.markings}
\usetikzlibrary{decorations.pathmorphing}
\usetikzlibrary{decorations.pathreplacing}
\usetikzlibrary{patterns}
\usetikzlibrary{arrows}

\newcommand{\bb}[1]{\mathbb{#1}}

\newcommand{\1}{\mathbbm{1}}



\newcommand{\blank}[1]{}


\newcommand{\E}{\bb E}
\newcommand{\R}{\bb R}
\newcommand{\Z}{\bb Z}
\newcommand{\C}{\bb C}

\newcommand{\N}{\bb N}
\newcommand{\T}{\bb T}
\renewcommand{\P}{\bb P}


\newcommand{\Wcal}   {\mathcal{W}}


\newcommand{\nnb}	{\nonumber \\}
\newcommand{\bubble}{{\sf B}}
\newcommand{\degree}{\Omega}

\def\DefineTheorems{
	\newtheorem{lemma}      [theorem] {Lemma}
	\newtheorem{cor}        [theorem] {Corollary}
	
	\newtheorem{prop}        [theorem] {Proposition}
	\theoremstyle{definition}
	
	\newtheorem{rk}       [theorem] {Remark}

}

\UseSection
\setcounter{secnumdepth}{3} 

\newcommand{\lbeq}[1]  {\label{e:#1}}
\newcommand{\refeq}[1] {\eqref{e:#1}}    

\title{The near-critical two-point function and the torus plateau\\
for weakly self-avoiding walk in high dimensions}
\author{Gordon Slade\thanks{Department of Mathematics,
     University of British Columbia,
     Vancouver, BC, Canada V6T 1Z2.
     {\tt slade@math.ubc.ca}.
     {\tt https://orcid.org/0000-0001-9389-9497}.
     }}

\begin{document}

\date{\vspace{-5ex}} 

\maketitle

\begin{abstract}
We use the lace expansion to study the long-distance decay of the
two-point function of weakly self-avoiding walk on the integer lattice $\mathbb{Z}^d$ in
dimensions $d>4$, in the vicinity of the critical point, and
prove an upper bound
$|x|^{-(d-2)}\exp[-c|x|/\xi]$, where the correlation length
$\xi$ has a square root divergence at the critical point.
As an application, we prove that the two-point function for weakly self-avoiding
walk on a discrete torus in dimensions $d>4$ has a ``plateau.''
We also discuss the significance and consequences of the plateau for the analysis
of critical behaviour on the torus.
\end{abstract}

%

\section{Introduction and main result}

\subsection{Introduction}

A guiding but generally unproven
principle in the scaling theory for critical phenomena in statistical mechanical models
on $\Z^d$ is that the two-point function
near a critical point generically  has decay of the form
\begin{equation}
\lbeq{Gscaling}
    G_z(x) \approx \frac{1}{|x|^{d-2+\eta}} g(|x|/\xi(z))
\end{equation}
in some reasonable meaning for ``$\approx$'', when $|x|$ is comparable to
the correlation length $\xi(z)$ and $z$ is close
to its critical value $z_c$.
The parameter $z$ depends on the model
and represents, e.g.,
the fugacity for self-avoiding walk, the bond density for bond percolation, or the
inverse temperature for the Ising model.
The universal critical exponent $\eta$ depends on dimension, the correlation length
$\xi(z) \approx (1-z/z_c)^{-\nu}$
diverges as $z \to z_c$  with
a  dimension-dependent universal critical exponent $\nu$,
and $g$ is a function with rapid decay.
The relation \refeq{Gscaling} is a basis for the derivation of the scaling relations
between critical exponents, such as Fisher's relation $\gamma=(2-\eta)\nu$, which are
of primary importance in critical phenomena.  Such derivations can be found for spin
systems in \cite[Section~4.1]{ID89a} (in a physics style), or (in a more mathematical
style) in \cite[Section~9.2]{Grim99} for percolation and in \cite[Section~2.1]{MS93}
for self-avoiding walk.
The relation \refeq{Gscaling} is fundamental in the physics literature but a mathematical justification
is lacking in most examples.

In our main result, Theorem~\ref{thm:mr},
we prove an upper bound of the form \refeq{Gscaling}
for weakly self-avoiding walk in dimensions $d>4$ (for which $\eta=0$), i.e.,
\begin{equation}
\lbeq{Gbd1}
    G_z(x) \le c_0 \frac{1}{1\vee |x|^{d-2}}e^{-c_1m(z) |x|}
    \qquad
    (x \in \Z^d, \; z \in [0,z_c]),
\end{equation}
with $c_1\in (0,1)$ and
with $m(z) = \xi(z)^{-1}$ asymptotic to a multiple of $(1-z/z_c)^{1/2}$ as $z\to z_c$.
We generally write formulas in
terms of the \emph{mass} $m(z)$ rather than the correlation length $\xi(z)=m(z)^{-1}$.
The norm $|x|$ denotes the Euclidean norm $\|x\|_2$,
and $a\vee b= \max\{a,b\}$ is used in \refeq{Gbd1} to avoid division by zero.

The inequality \refeq{Gbd1} is a one-sided inequality so is not as strong as the
assertion \refeq{Gscaling}.  Nevertheless, the upper bound is arguably more useful
than a lower bound, and it has remarkable new consequences.
One application of \refeq{Gbd1} is given in Theorem~\ref{thm:plateau}, where we prove
that the decay of the two-point function
for weakly self-avoiding walk on a discrete torus in dimensions $d>4$ has a ``plateau,''
in the sense that it decays like the $\Z^d$ two-point function for small $x$ and then
ceases to decay and levels off for larger $x$ in the torus.
As discussed in more detail in Section~\ref{sec:torus},
the effect of periodic vs free boundary conditions on a two-point function above the
upper critical dimension has been
the subject of some debate in the physics literature.
It is therefore useful to have theorems which
provide definitive statements concerning the plateau.
Furthermore, \refeq{Gbd1} and the existence of the plateau
are used in \cite{MS22} to provide a partial answer to the
question: how long does a weakly self-avoiding walk on a discrete torus in dimensions $d>4$ have
to be before its behaviour differs significantly
from that of a weakly self-avoiding walk on $\Z^d$?
(The answer is at least length $r^{d/2}$ on a torus of volume $r^d$.)
See also \cite{Mich22} for related work concerning the Brownian scaling limit on the torus.

For spread-out
percolation in dimensions
$d>6$ an analogue of \refeq{Gbd1} is proved in \cite{HMS22} by methods
very  different from the
ones we use here,
and this allows the existence of the torus plateau to be proved via an adaptation of
the proof of Theorem~\ref{thm:plateau}.  Among other consequences, this leads to a proof
of the percolation triangle condition for a high-dimensional
torus (as defined in \cite{BCHSS05a}) \emph{without} the
need for any torus lace expansion as in \cite{BCHSS05b}---the torus critical behaviour
can be inferred directly from consequences obtained via the percolation lace expansion on $\Z^d$, which provides a new perspective on the much studied subject of torus percolation.
These two applications of the torus plateau, to weakly self-avoiding walk
in \cite{MS22} and to percolation in \cite{HMS22}, indicate that it is
an invaluable tool for the analysis of torus critical phenomena.

As a byproduct and as an instructive comparison,
we also give a short proof of the existence of a plateau for
the torus two-point function for simple random walk in dimensions $d>2$;
this plateau was announced in \cite{ZGFDG18} and proved
in \cite{Grim18,ZGDG20} using methods different from ours.

\subsection{The model}
\label{sec:model}

Our results are for the nearest-neighbour weakly self-avoiding walk in dimensions $d>4$.
Background material can be found in \cite{MS93,Slad06}.  The model is defined as follows.

For $d\in\N$,
let $D:\Z^d \to \R$ be the one-step transition probability for simple random walk on $\Z^d$,
i.e.,
$D(x)=\frac{1}{2d}$ if $|x|=1$ and otherwise $D(x)=0$.
For $n \in \N$, let $D^{*n}$ denote the $n$-fold
convolution of $D$ with itself; this is the $n$-step transition probability.
We adopt the convention that $D^{*0}(x) = \delta_{0,x}$.
Let $\Wcal_n(x)$ denote the set of $n$-step walks from $0$ to $x$, i.e., the
set of $\omega = (\omega(0),\omega(1),\ldots,\omega(n))$ with each $\omega(i)\in \Z^d$,
$\omega(0)=0$, $\omega(n)=x$,
and $|\omega(i)-\omega(i-1)|=1$ for $1 \le i \le n$.
The set $\Wcal_0(x)$ consists of the zero-step walk $\omega(0)=0$ when $x=0$,
and otherwise it is the empty set.  We write $\degree = 2d$
for the degree of the nearest-neighbour graph.
The \emph{simple random walk two-point function} (also called the
\emph{lattice Green function}) is
defined, for $z \in [0,\frac{1}{\degree}]$, by
\begin{equation}
\lbeq{Czdef}
    C_z(x) = \sum_{n=0}^\infty \sum_{\omega\in \Wcal_n(x)} z^n
    =
    \sum_{n=0}^\infty (z\degree)^n D^{*n}(x)
    \qquad
    (x \in \Z^d).
\end{equation}

For an $n$-step walk $\omega$, and for $0 \le s < t \le n$, we define
\begin{equation}
    U_{st}(\omega) =
    \begin{cases}
        -1 & (\omega(s)=\omega(t))
        \\
        0 & (\omega(s)\neq \omega(t)).
    \end{cases}
\end{equation}
Given $\beta \in (0,1)$, $z \ge 0$, and $x \in \Z^d$,
the \emph{weakly self-avoiding walk two-point function} is then
defined by
\begin{equation}
\lbeq{Gzdef}
    G_z(x) = \sum_{n=0}^\infty \sum_{\omega\in \Wcal_n(x)} z^n \prod_{0\le s<t\le n}(1+\beta U_{st}(\omega)).
\end{equation}
Compared to \refeq{Czdef}, the product in \refeq{Gzdef} discounts each $\omega$ by a factor
$1-\beta$ for each pair $s,t$ with an intersection for $\omega$, hence the name
``weakly self-avoiding walk.''  The choice $\beta=1$ defines the \emph{strictly} self-avoiding
walk.
The \emph{susceptibility} is defined by
\begin{equation}
    \chi(z)=\sum_{x\in \Z^d} G_z(x).
\end{equation}
A standard subadditivity argument implies the existence of $z_c=z_c(\beta) \ge
z_c(0)=\frac{1}{\degree}$ such that
$\chi(z)$ is finite if and only if $z \in [0,z_c)$; also $\chi(z) \ge 1/(1-z/z_c)$
so $\chi(z_c)=\infty$ (see, e.g., \cite[Theorem~2.3]{Slad06}).
In particular, this implies that the series \refeq{Gzdef} converges at least for $z \in [0,z_c)$.

\subsection{Main result}

Our main result is the following theorem.  Its proof, which uses the
Brydges--Spencer lace
expansion \cite{BS85}, is inspired by
the methods of \cite{Slad20_lace} and
\cite[Section~6.5.1]{MS93} (the latter is based on \cite{HS92a}) but is more than
the union of these methods.
Its statement involves the mass (inverse correlation length)
$m(z)$; this is the exponential decay rate of the subcritical two-point function
and is defined explicitly in \refeq{mzdef}.

We write $f \sim g$ to mean $\lim f/g =1$, and
$f \asymp g$ to mean that $c_1 f \le g \le c_2 f$
with $c_1,c_2>0$.  In general, constants are permitted to depend on the dimension $d$, which
is a fixed parameter in our analysis.

\begin{theorem}
\label{thm:mr}
Let $d>4$ and let $\beta$ be sufficiently small.
There are constants $c_0>0$ and $c_1\in (0,1)$, which depend on $d$ but not on $\beta$,
such that for all $z\in (0, z_c)$ and $x\in\Z^d$,
\begin{equation}
\lbeq{Gmr}
    G_z(x) \le c_0 \frac{1}{1\vee |x|^{d-2}} e^{-c_1 m(z)|x|}.
\end{equation}
The mass has the asymptotic form $m(z) \sim c(1-z/z_c)^{1/2}$ as $z \to z_c$,
with constant $c=\degree^{1/2} + O(\beta)$.
\end{theorem}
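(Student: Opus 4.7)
The plan is to combine the Brydges--Spencer lace expansion with a complex-momentum (Combes--Thomas) contour shift in order to extract both the polynomial prefactor $(1\vee|x|)^{-(d-2)}$ and the exponential factor $e^{-c_1 m(z)|x|}$ simultaneously. For $\beta$ small and $d>4$, the diagrammatic estimates based on the triangle condition provide the convolution identity
\begin{equation}
G_z(x) = \delta_{0,x} + \sum_{y\in\Z^d}\bigl(z\degree D(y) + \Pi_z(y)\bigr) G_z(x-y),
\end{equation}
with pointwise decay $|\Pi_z(y)| \le c\beta(1\vee|y|)^{-(d-2)}$ (and in fact faster) uniformly in $z\in[0,z_c)$. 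In Fourier space this becomes $\hat G_z(k) = (1 - z\degree\hat D(k) - \hat\Pi_z(k))^{-1}$, and I would define the mass $m(z)$ as the smallest $m\ge 0$ solving $1 - z\degree\hat D(ime_1) - \hat\Pi_z(ime_1) = 0$; this coincides with the exponential decay rate of $G_z(ne_1)$ and, by lattice symmetry, controls $G_z$ in every coordinate direction.

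The next step is the bare exponential bound $G_z(x) \le C e^{-c_1 m(z)|x|}$ via a Combes--Thomas shift. Tilting the lace expansion coefficients by $e^{t\,e\cdot y}$ for any unit vector $e$ and any $|t|$ slightly less than $m(z)$ still gives absolutely convergent sums, since the tilted $\hat\Pi_z$ remains analytic in a sufficiently wide strip when $\beta$ is small; shifting the Fourier inversion contour by $i c_1 m(z)\hat x$ then yields exponential decay with no polynomial prefactor. This is the massive analogue of the subcritical decay argument of \cite[Section~6.5.1]{MS93}.

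To add the polynomial prefactor, I would run a deconvolution bootstrap in the spirit of \cite{Slad20_lace}. Let $C_{\mu^2}$ denote the massive simple random walk Green function solving $(-\Delta+\mu^2)C_{\mu^2}=\delta$, which satisfies the classical bound $C_{\mu^2}(x)\le c(1\vee|x|)^{-(d-2)}e^{-c'\mu|x|}$ for $d>2$. Taylor expanding $1-z\degree\hat D(k)-\hat\Pi_z(k)$ around $k=ime_1$ through second order and matching coefficients produces a rewriting
\begin{equation}
G_z = A(z)^{-1}\,C_{m(z)^2} + C_{m(z)^2} * R_z * G_z,
\end{equation}
where $A(z)$ is a wave-function renormalisation close to $1$ and $\hat R_z$ vanishes to higher order at $k=ime_1$, so $R_z$ decays faster in $x$ than $\Pi_z$. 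Introducing the bootstrap quantity $a_z:=\sup_{x\in\Z^d} G_z(x)(1\vee|x|)^{d-2}e^{c_1 m(z)|x|}$, one then seeks a self-improving inequality $a_z \le K + c\beta\,a_z$, and hence $a_z\le 2K$.

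The main obstacle is the convolution stability that feeds the bootstrap, namely an estimate of the form
\begin{equation}
\bigl((1\vee|x|)^{-(d-2)}e^{-c_1 m|x|}\bigr) * \bigl((1\vee|x|)^{-(d-2)}e^{-c_2 m|x|}\bigr)
\le c\,(1\vee|x|)^{-(d-2)}e^{-c_1 m|x|}
\end{equation}
with $c_2>c_1$ and $c$ uniform in $m\in[0,m_0]$. Splitting the convolution at the scale $|x|/2$ and treating each half separately is what makes the proof \emph{more than the union} of the two prior methods: the short-distance regime $|x|\ll 1/m$ is handled by the critical infrared bound of \cite{Slad20_lace}, the long-distance regime $|x|\gg 1/m$ by Combes--Thomas, and the two must be glued across the crossover $|x|\sim 1/m$ without losing either factor. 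Finally, the mass asymptotic $m(z)\sim \degree^{1/2}(1-z/z_c)^{1/2}$ follows by Taylor expanding the defining identity for $m$: the $\hat D$ contribution gives leading coefficient $\degree\cdot\tfrac{1}{2d}=1$ on the $m^2$ term, the $\hat\Pi_z$ contribution perturbs this by $O(\beta)$, and evaluation at $z=z_c$ forces $m(z)^2 = (\degree+O(\beta))(1-z/z_c)$, yielding the stated constant $c = \degree^{1/2} + O(\beta)$.
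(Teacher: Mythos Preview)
Your high-level architecture --- lace expansion, isolation of a massive free Green function as leading term, and control of the remainder --- matches the paper. But the implementation you propose for the remainder has a genuine gap, and the paper proceeds quite differently at that step.

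\textbf{The gap.} Your convolution stability estimate
\[
\bigl((1\vee|x|)^{-(d-2)}e^{-c_1 m|x|}\bigr) * \bigl((1\vee|x|)^{-(d-2)}e^{-c_2 m|x|}\bigr)
\le c\,(1\vee|x|)^{-(d-2)}e^{-c_1 m|x|}
\]
is false as stated, uniformly in $m$: set $m=0$ and the left side is $\asymp |x|^{-(d-4)}$, not $|x|^{-(d-2)}$. The needed improvement must come from the remainder $R_z$, but the relevant property of $R_z$ (vanishing zeroth and second moments) does \emph{not} translate into faster pointwise decay in $x$-space; it manifests as extra vanishing of $\hat R_z(k)$ near $k=0$. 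An $x$-space bootstrap that feeds only pointwise bounds into convolutions cannot see this cancellation. Your remark that ``$R_z$ decays faster in $x$ than $\Pi_z$'' is therefore not justified, and the self-improving inequality $a_z\le K+c\beta a_z$ does not follow from what you have written. (Contrast with the massless argument of \cite{Slad20_lace}, where the $|x|^{-3(d-2)}$ decay of $\Pi_z$ itself is what is fed into the convolution; here you are convolving $C$ and $G$, both only $|x|^{-(d-2)}$.)

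\textbf{What the paper does instead.} The paper avoids any $x$-space bootstrap for the remainder. It tilts the entire convolution identity by $e^{mx_1}$ with $m\le\tfrac12 m(z)$, writes $G_z^{(m)}=\lambda_z C_{\mu_z}^{(m)}+f_z^{(m)}$ with $f_z^{(m)}=C_{\mu_z}^{(m)}*E_z^{(m)}*G_z^{(m)}$ and $\lambda_z,\mu_z$ chosen so that $\sum_x E_z(x)=\sum_x|x|^2E_z(x)=0$, and then works entirely in Fourier space. The key estimates are \emph{massive infrared bounds}: $|\nabla^\alpha\hat C_{\mu_z}^{(m)}(k)|,\,|\nabla^\alpha\hat G_z^{(m)}(k)|\le C(|k|+m)^{-2-|\alpha|}$, together with $|\nabla^\alpha\hat E_z^{(m)}(k)|=o(\beta(|k|+m)^{3-|\alpha|})$ for $|\alpha|\le 3$ (this is exactly where the vanishing moments enter). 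Multiplying these gives $|\nabla^\alpha\hat f_z^{(m)}(k)|\le C\beta|k|^{-(d-1)}\in L^1(\T^d)$ for all $|\alpha|\le d-2$, and integration by parts then yields $|f_z^{(m)}(x)|\le C\beta(1\vee|x|)^{-(d-2)}$ directly --- no bootstrap, no convolution stability in $x$. The massive infrared bound on $\hat G_z^{(m)}$ is itself obtained from a prior bootstrap \emph{in the mass variable $m$} (not in $z$ or in $x$), which also yields the control of the tilted $\hat\Pi_z^{(m)}$ needed throughout and the asymptotic $m(z)\sim(\degree^{1/2}+O(\beta))(1-z/z_c)^{1/2}$. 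Your Taylor expansion around $k=ime_1$ is replaced in the paper by the moment-matching choice of $\lambda_z,\mu_z$ at $k=0$, combined with the tilt; this keeps the analysis on the real torus $\T^d$.
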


In the proof, the order of operations is:
\begin{enumerate}
\item
Prove that (with constant independent of $\beta$)
\begin{equation}
\lbeq{Gzcub}
    G_{z_c}(x) \le c_0 \frac{1}{1\vee |x|^{d-2}}.
\end{equation}
This has been proved already in \cite{Slad20_lace} and also in \cite{BHK18}, in fact as
an asymptotic relation rather than only as a bound, and the
proof is not repeated here.
Those proofs use the lace expansion with a bootstrap argument varying $z$.
\item
Prove that $m(z)  \sim {\rm const}(1-z/z_c)^{1/2}$ as $z \to z_c$.
An important element of the proof is the control of the lace
expansion ``tilted'' by $e^{m(z)x_1}$, where $x_1$ is the first component of $x \in \Z^d$.
The proof uses a bootstrap argument varying $m$, as in \cite{Hara90,MS93,HS92a}.
\item
Prove the inequality \refeq{Gmr}.   The proof
is based on the method of \cite{Slad20_lace} but now applied to the exponentially
tilted two-point function, which requires a significant extension of \cite{Slad20_lace}.
\end{enumerate}

The proof has the potential to apply
to spread-out strictly self-avoiding walk, to the Ising model, and to the $\varphi^4$ model,
above the upper critical dimension $4$.  However this
would need further development.
It does not apply in its present form to percolation, nor to lattice trees and lattice
animals; see Remark~\ref{rk:perc}.  A different approach is used to prove the
analogue of \refeq{Gmr} for percolation in high dimensions in
\cite{HMS22}, using methods special to percolation which do not apply to weakly self-avoiding
walk.

\subsection{Decay of the two-point function}

To place the estimate \refeq{Gmr} in context, we summarise what was previously known
about the decay of the two-point function.
The theory for the two-point function is more developed for
the more difficult case of strictly self-avoiding
walk ($\beta=1$) than for weakly self-avoiding walk, but typically the adaptation of
proofs from the former to the latter is straightforward since the proofs are based
on subaddivitity arguments that apply equally well to both cases.
\begin{enumerate}
\item
Let $z\in (0,z_c)$ and $d \ge 2$.
There is a $z$-dependent norm $|\cdot|_z$ on $\R^d$, with $\|x\|_\infty \le |x|_z
\le \|x\|_1$ for all $x \in \R^d$, such that the \emph{mass} $m(z)$
defined by the limit
\begin{equation}
\lbeq{mzdef}
    m(z) = \lim_{|x|_z \to\infty} \frac{-\log G_z(x)}{|x|_z}
\end{equation}
exists in $(0,\infty)$.  The function $m$  is continuous
and strictly decreasing in $z$,
$m(z) \to \infty$ as $z \to 0$,
and $m(z) \to 0$ as $z \to z_c$.
These facts are proved in  \cite[Section~4.1]{MS93} for strictly self-avoiding walk;
the proofs directly adapt to the weakly self-avoiding walk.
\item
Let $z\in (0,z_c)$ and $d \ge 2$.
The \emph{bubble diagram} is defined by $\bubble(z) = \sum_{x\in\Z^d}G_z(x)^2$.
The two-point function satisfies the inequality
\begin{equation}
\lbeq{Gbubblebd}
    G_z(x) \le \bubble(z)^{1/2} e^{-m(z)|x|_z} \qquad (x \in \Z^d).
\end{equation}
This is proved in \cite[Theorem~4.1.18]{MS93} for the strictly self-avoiding walk,
and the proof adapts directly to the weakly self-avoiding walk.
\item
Let $z\in (0,z_c)$ and $d \ge 2$.
The two-point function obeys
the Ornstein--Zernike decay
\begin{equation}
\lbeq{OZ}
    G_z((x_1,0,\ldots,0)) \sim c_z \frac{1}{x_1^{(d-1)/2}} e^{-m(z)x_1}
    \qquad (x_1\to \infty),
\end{equation}
with $c_z>0$.  Off-axis behaviour is also known.
This is proved in \cite{CC86b}
and \cite[Theorem~4.4.7]{MS93} for the strictly self-avoiding
walk.
The bound \refeq{OZ} exhibits a power-law correction $x_1^{-(d-1)/2}$ to the exponential decay, which is different from the power $|x|^{-(d-2)}$ in \refeq{Gmr} or \refeq{Gzcub}
(unless $d=3$).
\item
Let $z\in (0,z_c)$ and $d>4$.
The asymptotic behaviour of the mass is
\begin{equation}
\lbeq{masy}
    m(z) \sim {\rm const}\, (1-z/z_c)^{1/2}  \qquad   (z \to z_c).
\end{equation}
This is a statement that the critical exponent $\nu$ is equal to $\frac 12$.
For the strictly self-avoiding walk,
\refeq{masy} is proved in \cite{HS92a} and
\cite[Theorem~6.1.2]{MS93}  using the lace expansion.
In Section~\ref{sec:masy}, we indicate the small changes need to
prove \refeq{masy} for weakly self-avoiding walk.
The method of proof was first developed for percolation \cite{Hara90}, and its elementary version for simple random walk is given in \cite[Theorem~A.2]{MS93}.
\item
Let $d>4$.
The critical two-point function has asymptotic behaviour
\begin{equation}
\lbeq{Gzc}
    G_{z_c}(x)  \sim {\rm const}\, \frac{1}{|x|^{d-2}} \qquad (|x|\to\infty).
\end{equation}
This is proved for strictly self-avoiding walk in \cite{HHS03,Hara08},
for weakly self-avoiding walk in \cite{BHK18,Slad20_lace}, and for the continuous-time
weakly self-avoiding walk (also known as the lattice Edwards model) in \cite{BHH19}.
All these proofs use the lace expansion.
The formula \refeq{Gzc} is a statement that the critical exponent $\eta$ is equal to
zero\footnote{For the more difficult dimension $d=4$, $\eta=0$ is proved  for the continuous-time
weakly self-avoiding walk in
\cite{BBS-saw4}.  For dimensions
$d=2,3$ it is predicted but not proved that the decay is of the form $|x|^{-(d-2+\eta)}$
with $\eta =\frac{5}{24}$ for $d=2$ and $\eta \approx 0.03$ for $d=3$.}
for $d>4$.
\end{enumerate}

A consequence of \refeq{Gzc}
is that the critical bubble diagram $\bubble(z_c)$ is finite, which
in turn implies that the susceptibility obeys $\chi(z) \asymp (1-z/z_c)^{-1}$
(see, e.g., \cite[Theorem~2.3]{Slad06}).
This is a statement that the critical exponent $\gamma$ is equal to $1$.
A stronger asymptotic formula for the susceptibility
has also been proved---we recall in \refeq{chiasy} the proof that for $d>4$ and small $\beta$
\begin{equation}
\lbeq{chiasy0}
    \chi(z) \sim A (1-z/z_c)^{-1}  \qquad   (z \to z_c)
\end{equation}
with $A=1+O(\beta)$.
The same is proved for the strictly self-avoiding walk in \cite{HS92a}.

The importance of the estimate \refeq{Gmr} resides in its uniformity as $z \to z_c$.
Indeed, if we consider only $z \in (0,z_c-\delta]$ with fixed $\delta>0$,
then by \refeq{Gbubblebd} and the fact that $\bubble(z_c-\delta)<\infty$,
for any $c_1\in (0,1)$ we have
\begin{align}
\lbeq{smallz1}
    G_z(x) &
    \le
    {\rm const}\, e^{-(1-c_1)m(z)\|x\|_\infty} e^{-c_1m(z)\|x\|_\infty}.
\end{align}
Since $m(z) \ge m(z_c-\delta)$, and since $|x|\le d^{1/2}\|x\|_\infty$,
\begin{equation}
\lbeq{smallz2}
    e^{-(1-c_1)m(z)\|x\|_\infty} \le
    e^{-(1-c_1)m(z_c-\delta)\|x\|_\infty}
    \le {\rm const} \frac{1}{|x|^{d-2}}.
\end{equation}
Thus the bound \refeq{Gmr} holds for $z\in (0,z_c-\delta]$, so our focus
needs to be on $z$ close to $z_c$.

\subsection{Conjectured decay}

The precise asymptotic behaviour of the lattice Green function $C_z(x)$ for fixed $z$ less
than the critical value $z_0= \frac{1}{\Omega}$ has recently been elucidated
in \cite{MS21}.  It is proved there that there is an explicit norm $|\cdot|_z$ on $\R^d$
(not the same as the norm in \refeq{mzdef})
that interpolates monotonically between the limiting values $\lim_{z\to 0}|x|_z = \|x\|_1$
and $\lim_{z \to z_0}|x|_z = \|x\|_2$, and an explicitly defined function $m_0(z)$
with $m_0(z) \sim {\rm const} \, (1-z/z_0)^{1/2}$ as $z \to z_0$
(see \refeq{m0def}--\refeq{m0asy}), such that, as $n \to \infty$,
\begin{equation}
\lbeq{LGasy}
    C_z(nx)
    \sim
    c_{z,\hat x} \frac{1}{(n|x|_z)^{d-2}} (m_0(z)n|x|_z)^{(d-3)/2} e^{-  m_0(z)n|x|_z}
    \qquad
    (\text{nonzero} \; x \in \Z^d).
\end{equation}
The constant $c_{z,\hat x}$ has a limit as $z \to z_0$ which is positive and independent of
the direction $\hat x = \frac{x}{|x|_z}$.
This is consistent with \refeq{Gscaling} with $g(t) = t^{(d-3)/2} e^{-t}$.
It is natural to conjecture that \refeq{LGasy} will also apply to statistical mechanical
models above their upper critical dimensions, including self-avoiding walk and the Ising model
for $d>4$, and percolation for $d>6$.
A potential factor such as $(m(z)|x|)^{(d-3)/2}$ is
compensated by giving up some exponential decay
in the factor $e^{-c_1m|x|}$ in \refeq{Gmr}, with constant $c_1<1$.

\subsection{The plateau for the torus two-point function}
\label{sec:torus}

Let $\T_r^d = (\Z/r\Z)^d$ denote the discrete $d$-dimensional torus of period $r\ge 3$.
We are interested in large $r$, and in obtaining estimates that remain valid uniformly in
large $r$.
For notational
convenience, we sometimes evaluate a $\Z^d$ two-point function at a point $x\in\T_r^d$,
with the understanding that in this case we identify
$x$ with a point in $[-r/2,r/2)^d \cap \Z^d$.

Let $G^{\T}_z(x)$ denote the analogue of \refeq{Gzdef}
for walks $\omega$
on the torus rather than on $\Z^d$, and let
\begin{equation}
    \chi^\T(z) = \sum_{x\in\T_r^d}G^{\T}_z(x)
\end{equation}
denote the torus susceptibility.
For any $n$-step torus walk $\omega$, by the pigeon-hole principle
there must be a vertex that is visited by $\omega$ at least $N_n=\lceil nr^{-d}\rceil$ times, so
\begin{equation}
    \prod_{0\le s<t\le n}(1+\beta U_{st}(\omega)) \le (1-\beta)^{\binom{N_n}{2}}.
\end{equation}
For fixed $r$, the binomial coefficient
$\binom{N_n}{2}$  grows quadratically in $n$,
so since the number of $n$-step walks is at most $\degree^n$, for any $z \ge 0$ we have
\begin{equation}
    \chi^\T(z) \le \sum_{n=0}^\infty (z\degree)^n (1-\beta)^{\binom{N_n}{2}} < \infty.
\end{equation}
The susceptibility and two-point function on the torus are therefore entire functions of $z$
when $\beta >0$.
This is in contrast to the situation for $\Z^d$, where convergence is limited to the
disk of radius $z_c$.

For $r \ge 3$,
walks on the torus are in one-to-one correspondence with walks on $\Z^d$ via
the canonical projection from $\Z^d$ to $\T_r^d$.  We refer to the $\Z^d$ walk
corresponding to a torus walk as the ``unfolding'' of the torus walk.\footnote{In detail,
if $\omega$ is a torus walk starting at $0$
then its unfolding $\bar\omega$ is the walk on $\Z^d$ with
$\bar\omega(0)=0$ and $\bar\omega(i) = \bar\omega(i-1)+ (\omega(i)-\omega(i-1))_r$ where
for $x \in \T_r^d$ we write $x_r$ for its unique representative in $[-\frac r2,\frac r2)
\cap \Z^d$.}  Since the unfolding of a  walk cannot have more self-intersections than
the walk itself, for all dimensions, for all $\beta\in [0,1]$, and for all $z\in [0,z_c]$ we have
\begin{equation}
\lbeq{chiTlechi}
    \chi^\T(z) \le \chi(z).
\end{equation}

The proofs of the theorems discussed in this section, Theorems~\ref{thm:plateau}--\ref{thm:plateau-srw}, are given
in Section~\ref{sec:plateaupf}.

\subsubsection{The plateau for weakly self-avoiding walk for $d>4$}

The following theorem is one application of Theorem~\ref{thm:mr}.
It shows that the slightly subcritical weakly self-avoiding walk torus two-point function
has asymptotic form given by two terms:  an $x$-dependent term that is simply
the $\Z^d$ two-point function, plus a constant term that can dominate for large $x$.
The constant term is the torus ``plateau.''

\begin{theorem}
\label{thm:plateau}
Let $d>4$ and let $\beta$ be sufficiently small.
There are constants $c_i>0$, depending on $d$ but not on $\beta$,
such that for all $x \in \T_r^d$,
\begin{equation}
\lbeq{plateau}
    G_{z}(x) +c_1 \frac{\chi(z)}{r^d}
    \le
    G^{\T}_{z}(x)
    \le
    G_{z}(x) +c_2 \frac{\chi(z)}{r^d},
\end{equation}
where the upper bound holds for all $r \ge 3$ and all $z \in (0,z_c)$, whereas
the lower bound holds provided that
$z \in [ z_c-c_3r^{-2},   z_c-c_4\beta^{1/2}r^{-d/2}]$.
\end{theorem}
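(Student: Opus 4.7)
\emph{Plan.} I would split the proof into the upper and lower bounds in \refeq{plateau}: the upper bound is a clean consequence of Theorem~\ref{thm:mr}, whereas the lower bound is the more delicate assertion and is the source of the specific $z$-range in the statement.

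\emph{Upper bound.} The starting point is the unfolding map: every torus walk $\omega$ from $0$ to $x\in\T_r^d$ lifts uniquely to a $\Z^d$ walk $\bar\omega$ with $\bar\omega(0)=0$ and endpoint $\bar\omega(n)=x+ru$ for some $u\in\Z^d$. Every $\Z^d$ self-intersection of $\bar\omega$ projects to a torus self-intersection of $\omega$, so the torus weight $\prod_{s<t}(1+\beta U_{st}(\omega))$ is bounded above by the $\Z^d$ weight $\prod_{s<t}(1+\beta U_{st}(\bar\omega))$. Grouping torus walks by the endpoint of their unfolding yields
\[
G^{\T}_z(x) \;\le\; \sum_{u\in\Z^d} G_z(x+ru) \;=\; G_z(x) + \sum_{u\ne 0} G_z(x+ru).
\]
For $u\ne 0$ one has $|x+ru|\ge r/2$, so Theorem~\ref{thm:mr} applies, and the sum is a Riemann sum for the smooth function $y\mapsto |y|^{-(d-2)}e^{-c_1 m(z)|y|}$ on $|y|\ge r/2$. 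Hence
\[
\sum_{u\ne 0} G_z(x+ru) \;\le\; \frac{C}{r^d}\int_{\R^d}\frac{e^{-c_1 m(z)|y|}}{|y|^{d-2}}\,dy \;\asymp\; \frac{1}{r^d\, m(z)^2} \;\asymp\; \frac{\chi(z)}{r^d},
\]
using the mass asymptotic in Theorem~\ref{thm:mr} together with $\chi(z)\asymp(1-z/z_c)^{-1}$ from \refeq{chiasy0}.

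\emph{Lower bound.} The inequality $G_z(x)\le G^{\T}_z(x)$ is \emph{false} in general, so the lower bound is not just the reverse of the unfolding bound. I would produce the two terms on the left of \refeq{plateau} separately. The $G_z(x)$ term comes from restricting $G_z(x)$ to walks of range $<r/4$; for such walks the torus and $\Z^d$ weights agree, and the contribution of walks of range $\ge r/4$ is controlled by Theorem~\ref{thm:mr} and absorbed into the plateau term. For the plateau term I would first prove $\chi^{\T}(z)\ge c\chi(z)$ in the stated range, by noting that the expected number of extra torus self-intersections in a walk of length $n$ is $O(n^2/r^d)$; the upper cut-off $z\le z_c-c_4\beta^{1/2}r^{-d/2}$ keeps typical walk lengths below the threshold $r^{d/2}\beta^{-1/2}$ at which the associated weight loss would become $\Theta(1)$. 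I would then lift the average identity $r^{-d}\sum_x G^{\T}_z(x)=\chi^{\T}(z)/r^d$ to a pointwise lower bound via a convolution decomposition of torus walks through an intermediate vertex, exploiting finiteness of the bubble $\bubble(z_c)<\infty$ (available for $d>4$) to control the cost of the implicit disjoint-walk constraints.

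\emph{Main obstacle.} I expect the principal difficulty to be the pointwise plateau lower bound. The average of $G^{\T}_z(x)$ over $x$ is exactly $\chi^{\T}(z)/r^d$ by definition, but promoting this to a uniform pointwise lower bound requires the correlation length to be at least of order $r$, so that torus walks effectively equidistribute on $\T_r^d$; this is precisely what the lower cut-off $z\ge z_c-c_3 r^{-2}$ (equivalently $m(z)\lesssim r^{-1}$) provides. Identifying which half of the $z$-range enters which step of the lower bound argument is a further piece of the bookkeeping.
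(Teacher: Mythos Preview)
Your upper bound is correct and is exactly the paper's argument.

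For the lower bound, your heuristics for the two $z$-restrictions are sound, but there is a genuine gap in the plan: the ``lifting'' from the averaged statement $\chi^{\T}(z)\ge c\chi(z)$ to the pointwise bound $G^{\T}_z(x)\ge G_z(x)+c\,r^{-d}\chi(z)$. The sketch you give (convolution through an intermediate vertex, bubble control) does not deliver a \emph{lower} bound on $G^{\T}_z(x)$: splitting a self-interacting walk and dropping the mutual-avoidance constraint produces an \emph{upper} bound on the two-point function, not a lower one, and there is no evident mechanism preventing the average $r^{-d}\chi^{\T}$ from being carried by atypical $x$. In fact the paper obtains $\chi^{\T}\ge c\chi$ only as a \emph{corollary} of the pointwise Theorem~\ref{thm:plateau}, not as an input to it.

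The paper instead works pointwise from the outset, using the same sum $\psi_{r,z}(x)=\sum_{u\ne 0}G_z(x+ru)$ that you used for the upper bound, together with $\psi^{\T}_{r,z}(x)=G^{\T}_z(x)-G_z(x)$. A pointwise lower bound $\psi_{r,z}(x)\ge c\,r^{-d}\chi(z)$ follows not from Theorem~\ref{thm:mr} but from the elementary derivative estimate $G_{z_c}(y)-G_z(y)\le c\rho\,|y|^{-(d-4)}$ (a consequence of $z\,\partial_z G_z\le G_z*G_z$ and the critical $|y|^{-(d-2)}$ decay): one sums $G_z(x+ru)\ge a_4|ru|^{-(d-2)}-a_3\rho\,|ru|^{-(d-4)}$ over $0<|u|\le L$ and optimises $L\asymp(\rho r^2)^{-1/2}$, which is large precisely when $\rho\le c_3 r^{-2}$. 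It then remains to show that the nonnegative correction $\psi_{r,z}(x)-\psi^{\T}_{r,z}(x)$ is at most half of this. Writing the correction as a sum over unfolded walks weighted by $K(\omega)[1-K^{+}(\omega)]$ (with $K^{+}$ encoding the extra torus intersections absent in the lift), applying $1-\prod_a(1-u_a)\le\sum_a u_a$, and decomposing each walk at the two times $s<t$ with $\pi_r\omega(s)=\pi_r\omega(t)\ne\omega(s)$ into three independent $\Z^d$ walks yields a bound of order $\beta\,r^{-d}\chi(z)\,(1+r^{-d}\chi(z)^2)$. Requiring $\beta r^{-d}\chi^2$ to be small is exactly $\rho\ge c_4\beta^{1/2}r^{-d/2}$---the same threshold your heuristic identified, but arising here from a concrete three-walk diagram rather than from a global weight-loss estimate. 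Note also that no separate ``range $<r/4$'' argument is needed to recover $G_z(x)$: the decomposition $G^{\T}_z(x)=G_z(x)+\psi^{\T}_{r,z}(x)$ handles this automatically.
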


Note that the interval of $z$ values for the lower bound is nonempty for sufficiently
large $r$ since $d>4$.
Of course the upper bound of \refeq{plateau} also holds for $z=z_c$
since $\chi(z_c)=\infty$.  The lower bound cannot hold when $z=z_c$ for the same reason,
because, as noted above, $G_z^{\T}(x)$ is an entire function
of $z$ and hence is finite for \emph{all} $z \ge 0$.
By universality we expect that \refeq{plateau}
also holds for the strictly self-avoiding walk in dimensions $d>4$; Monte Carlo verification
of this has been carried out in \cite{ZGFDG18,ZGDG20}.

Let $\rho=z_c-z$.
In the proof of Theorem~\ref{thm:plateau} (see Remark~\ref{rk:Gamma}), we show  that
(small) $c_3$ can be chosen such that
\begin{equation}
\lbeq{Gzzc}
    G_z(x) \asymp \frac{1}{1\vee |x|^{d-2}}
    \quad
    \text{when $\rho \in [0,c_3 r^{-2}]$ and $x\in\T_r^d$.}
\end{equation}
It therefore follows from \refeq{plateau} that
\begin{equation}
\lbeq{plateaux}
    G^{\T}_{z}(x)
    \asymp
    \frac{1}{1\vee |x|^{d-2}} +  \frac{\chi(z)}{r^d}
    \quad \text{when $\rho \in [ c_4\beta^{1/2}r^{-d/2},c_3r^{-2}]$ and $x\in\T_r^d$.}
\end{equation}
By \refeq{chiasy0},
the susceptibility diverges linearly at the critical point,
so $\chi(z) \asymp r^{p}$ when $\rho \asymp r^{-p}$, and hence
\begin{equation}
\lbeq{plateaup}
    G^{\T}_{z}(x)
    \asymp
    \frac{1}{1\vee |x|^{d-2}} +  \frac{1}{r^{d-p}}
    \quad\text{when $\rho \asymp \frac{1}{r^p}$  with $p\in [2,\tfrac d2]$
     and $x\in\T_r^d$.}
\end{equation}
For $p>2$ the constant term dominates when $|x|^{d-2} \ge r^{d-p}$,
i.e., when $|x| \ge r^{(d-p)/(d-2)}$.  The latter domain of $x$ is the ``plateau''
where the torus two-point function no longer decays with distance.

\subsubsection{The scaling window}

In models defined on a finite graph such as a torus, rather than on an infinite graph
such as $\Z^d$, there is not a precise notion of a critical point.   Unlike the
susceptibility $\chi$ which diverges at $z_c$,
the torus susceptibility $\chi^\T$ is an entire function with no singularity.
Instead, there is a notion of a \emph{scaling window} (or \emph{critical window}),
which is an interval of parameter values where the model exhibits critical behaviour.

Self-avoiding walk on the complete graph is exactly solvable \cite{Slad20,DGGNZ19}.
Consistent with the discussion in \cite{Slad20} for the complete graph, it is natural
to define the scaling window for self-avoiding walk on a torus
in dimensions $d>4$ to consist
of the values of $z$ with $|z-z_c|\le O(r^{-d/2})$.  Motivated by
the complete graph, the susceptibility is conjectured to obey $\chi^\T(z) \asymp
r^{d/2}$ when $z$ is in the scaling window.\footnote{After this work was completed,
it has been conjectured in \cite{MPS23} that
for $d>4$ and all $\beta\in (0,1]$
 the universal profile for the
susceptibility in the scaling window
is given by $\lim_{r \to \infty} r^{-d/2}\chi^{\T}(z_c(1+\lambda_1 s r^{-d/2}))
= \lambda_2 \int_0^\infty t\, e^{-\frac 14 t^4 - \frac 12 st} dt$ for all $s \in \R$,
for some constants $\lambda_1,\lambda_2$ depending on $d$ and $\beta$ (but not on $s,r$). }
For weakly self-avoiding walk,
constants in these two relations naturally depend on $\beta$, since $\beta=0$ is simple
random walk for which there is no scaling window because even on the torus the susceptibility
diverges at $z=\frac{1}{\degree}$.
The largest value $z=z_c-c_4\beta^{1/2}r^{-d/2}$ that is permitted for the lower bound in
Theorem~\ref{thm:plateau}
lies in this scaling window.  For notational convenience, we write this value
as $z_*$.
Summation of the lower bound of \refeq{plateau} over $x \in \T_r^d$,
together with the general upper bound of \refeq{chiTlechi},
leads to the following immediate corollary
of Theorem~\ref{thm:plateau}.
In particular, with
\refeq{chiasy0}, the corollary shows that $\chi^\T (z_*) \asymp r^{d/2}$, consistent with the above
mentioned conjecture.

\begin{cor}
The two susceptibilities obey the inequality
\begin{equation}
    c_1 \chi(z) \le \chi^\T(z) \le  \chi(z),
\end{equation}
with the upper bound valid for general $d,\beta,z$, and the lower bound valid
for $d>4$, sufficiently small $\beta>0$, and under the same restrictions on $z$
as in Theorem~\ref{thm:plateau}.
\end{cor}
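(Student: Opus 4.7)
The plan is very short: the upper bound is exactly the unfolding inequality \refeq{chiTlechi} that was already established in the text, so no further work is needed there. For the lower bound, the strategy is simply to sum the pointwise lower bound of \refeq{plateau} over $x \in \T_r^d$ and exploit positivity of the $\Z^d$ two-point function.

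In more detail, under the hypotheses of Theorem~\ref{thm:plateau}, the lower bound of \refeq{plateau} gives, for every $x \in \T_r^d$,
\begin{equation}
    G^{\T}_z(x) \ge G_z(x) + c_1 \frac{\chi(z)}{r^d}.
\end{equation}
Summing over $x \in \T_r^d$ (with the convention that $G_z(x)$ denotes evaluation at the representative in $[-r/2,r/2)^d \cap \Z^d$), and using that there are exactly $r^d$ terms in the sum, I get
\begin{equation}
    \chi^\T(z) \ge \sum_{x\in\T_r^d} G_z(x) + c_1 \chi(z).
\end{equation}
Since $G_z(x) \ge 0$ for all $x \in \Z^d$, the first sum is nonnegative, so it may be dropped to obtain $\chi^\T(z) \ge c_1 \chi(z)$, as required. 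The parameter restrictions stated in the corollary are exactly those needed for the lower bound of \refeq{plateau} to be available, so they transfer without modification.

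There is no real obstacle; all substantive work has been carried out in Theorem~\ref{thm:plateau}, and the corollary is obtained from that theorem by a one-line summation argument together with nonnegativity of $G_z$.
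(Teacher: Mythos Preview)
Your proposal is correct and matches the paper's own argument essentially verbatim: the upper bound is \refeq{chiTlechi}, and the lower bound follows by summing the lower bound of \refeq{plateau} over $x \in \T_r^d$ and discarding the nonnegative term $\sum_{x\in\T_r^d} G_z(x)$.
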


As a comparison, we note that
the scaling window for high-dimensional percolation on a torus has been extensively studied;
a summary can be found in \cite[Chapter~13]{HH17book}.
For percolation,
the torus scaling window consists of points $p$ with $|p-p_c| \le O(r^{-d/3})$.
In particular, the $\Z^d$ critical point $p_c$ lies in the scaling window.
In the scaling window, the torus susceptibility is of order $r^{d/3}$.
 Note the different exponent $r^{d/3}$ appearing
here, compared with $r^{d/2}$ for self-avoiding walk.
A new perspective on high-dimensional torus percolation is provided in \cite{HMS22},
based on a percolation version of Theorem~\ref{thm:mr} (proved via different means
than those used here) and of Theorem~\ref{thm:plateau} (proved using similar means
to those used here).  An advantage of this new perspective is that it obviates the need for the
torus lace expansion \cite{BCHSS05b}.  Instead, critical torus percolation can be analysed
using only results derived from the $\Z^d$ lace expansion, particularly the $|x|^{-(d-2)}$
decay of the two-point function \cite{HHS03,Hara08}, combined with percolation analogues
of Theorems~\ref{thm:mr}--\ref{thm:plateau}.

\subsubsection{Boundary conditions and the plateau}

There is a sizeable literature about the effect of boundary conditions
on the decay of the two-point function of
statistical mechanical models on a finite box above the upper critical
dimension, including \cite{ZGFDG18,WY14,LM16}.
This literature has included
some debate about what should be the correct behaviour.
The emerging consensus is that with free boundary conditions there is no
plateau and the susceptibility behaves as $r^2$ at the $\Z^d$ critical point
(for Ising or self-avoiding walk)
whereas with periodic boundary conditions (the torus)
there is a plateau and the susceptibility
behaves instead as $r^{d/2}$.

For the Ising model in dimensions $d>4$, with free boundary conditions
the lack of a plateau and the $r^2$ behaviour
of the susceptibility has been proved
in \cite{CJN21}.  With periodic boundary conditions, a plateau lower bound  is
proved in \cite{Papa06}, as is an $r^{d/2}$ lower bound for the susceptibility.
The upper bounds remain unproved on the torus.

The results for percolation are consistent with this picture, again with
the susceptibility of order $r^2$
for free boundary conditions but now of order $r^{d/3}$ on the torus.
For free boundary conditions,
the lack of a plateau at the $\Z^d$ critical point $p_c$ is proved in \cite{CH20}.
For periodic boundary conditions, a plateau upper bound at $p_c$ is proved in
\cite[Theorem~1.7]{HS14}, and plateau upper and lower bounds are proved throughout
the entire scaling window in \cite{HMS22}.

Theorem~\ref{thm:plateau} provides a contribution to this general discussion by
proving existence of a plateau for weakly self-avoiding walk in dimensions $d>4$,
up to the scaling window.

\subsubsection{The plateau for simple random walk for $d>2$}
\label{sec:plateau-srw}

A more elementary version of Theorem~\ref{thm:plateau}
for simple random walk is given in the following theorem, which is a byproduct of our
proof of Theorem~\ref{thm:plateau}.
Let $C^{\T}_z(x)$ denote the analogue of \refeq{Czdef} for walks $\omega$
on the torus rather than on $\Z^d$.  Let $z_0 = \frac{1}{\degree}=\frac{1}{2d}$.
The susceptibility $\chi_0(z)$ is the same for simple random walk on the torus or on $\Z^d$:
\begin{equation}
\lbeq{chi0def}
    \chi_0(z) = \sum_{x\in \Z^d} C_z(x) = \sum_{x\in \T_r^d} C_z^{\T}(x) = \frac{1}{1-z\degree}
    \qquad
    (z \in [0,z_0)).
\end{equation}
The isolation of $d=4$ in the theorem is an unnatural artifact of our proof.

\begin{theorem}
\label{thm:plateau-srw}
Let $d>2$.
For $d \neq 4$, there are constants $c_i'>0$ such that for all $x \in \T_r^d$,
\begin{equation}
\lbeq{plateau-srw}
    C_{z}(x) +c_1' \frac{\chi_0(z)}{r^d}
    \le
    C^{\T}_{z}(x)
    \le
    C_{z}(x) +c_2' \frac{\chi_0(z)}{r^d},
\end{equation}
where the upper bound holds for all $r \ge 3$ and all $z \in (0,z_0)$, whereas
the lower bound holds provided that
$z\in [z_0- c_3'r^{-2},z_0)$.
For $d=4$ the upper bound also holds as stated, but the constant term in the lower bound is weakened to
$c_3' \frac{\chi_0(z)}{r^d}\frac{1}{\log \chi_0(z)}$ with restriction
that $\rho|\log\rho|r^2$
is sufficiently small, where $\rho = z_0-z$.
\end{theorem}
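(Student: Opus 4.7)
The plan is to start from the wrapping identity. Since simple random walk has no self-avoidance, each $n$-step torus walk from $0$ to $x$ unfolds uniquely to an $n$-step $\Z^d$ walk from $0$ to some representative $x+y$ with $y\in r\Z^d$, and both walks carry the same weight $z^n$. Collecting terms by $y$ yields
\begin{equation}
    C_z^{\T}(x) \;=\; \sum_{y\in r\Z^d} C_z(x+y)
    \;=\; C_z(x) + W_z(x),
    \qquad
    W_z(x) \;\bydef\; \sum_{y\in r\Z^d\setminus\{0\}} C_z(x+y),
\end{equation}
which reduces Theorem~\ref{thm:plateau-srw} to the two-sided bound $W_z(x)\asymp \chi_0(z)/r^d$, uniform in $x\in\T_r^d$ and in the respective regimes stated.

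For the upper bound (valid for all $z\in(0,z_0)$), I would apply the standard simple random walk estimate
\begin{equation}
    C_z(w) \;\le\; \frac{c\,e^{-c'\,m_0(z)|w|}}{1\vee|w|^{d-2}},
    \qquad
    m_0(z)^2 \;\asymp\; 1-z\degree \;\asymp\; 1/\chi_0(z),
\end{equation}
which follows directly from the Fourier representation of $C_z$. Using $|x+y|\asymp|y|$ for $x\in[-r/2,r/2)^d$ and $y\in r\Z^d\setminus\{0\}$, the substitution $y=ru$ and comparison with the spherical integral $\int_1^\infty v\,e^{-c'm_0(z)r\,v}\,dv \asymp (m_0(z)r)^{-2}$ give
\begin{equation}
    W_z(x) \;\le\; \frac{c}{r^{d-2}}\sum_{u\in\Z^d\setminus\{0\}} \frac{e^{-c'm_0(z)r|u|}}{|u|^{d-2}}
    \;\le\; \frac{c''}{r^d\,m_0(z)^2} \;\asymp\; \frac{\chi_0(z)}{r^d},
\end{equation}
which is the required upper bound in every dimension $d>2$.

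For the lower bound with $d\ne 4$ in the window $z\in[z_0-c_3'r^{-2},z_0)$, I would use the complementary pointwise bound
\begin{equation}
    C_z(w) \;\ge\; \frac{c}{|w|^{d-2}}
    \qquad\bigl(1\le|w|\le 1/m_0(z)\bigr),
\end{equation}
obtained by isolating the $|k|\le 1/|w|$ part of the Fourier integral $\int (1-z\degree\hat D(k))^{-1}e^{-ikw}\,dk/(2\pi)^d$. Since $1/m_0(z)\ge c\,r$ throughout the stated window, the set $\{y\in r\Z^d:r\le|y|\le 1/m_0(z)\}$ is nonempty, and one obtains
\begin{equation}
    W_z(x) \;\ge\; \frac{c}{r^{d-2}}\sum_{u\in\Z^d,\,1\le|u|\le 1/(m_0(z)r)}\frac{1}{|u|^{d-2}}
    \;\asymp\; \frac{1}{r^d\,m_0(z)^2}
    \;\asymp\; \frac{\chi_0(z)}{r^d}.
\end{equation}

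The main obstacle is dimension $d=4$. Writing $C_z=C_{z_0}+\Delta$, the Fourier formula gives $|\Delta(w)|\le c\rho\int_0^1 k\,(\rho+k^2)^{-1}\,dk \asymp \rho|\log\rho|$ uniformly in $w$, so the pointwise lower bound $C_z(w)\ge c|w|^{-2}$ is only secured once $C_{z_0}(w)\ge c|w|^{-2}$ dominates $\Delta$, i.e., for $|w|\le 1/\sqrt{\rho|\log\rho|}$ rather than $|w|\le 1/\sqrt{\rho}$. Restricting the lower bound sum to this smaller range of $y\in r\Z^d$ (nonempty precisely when $\rho|\log\rho|r^2$ is small) yields the weakened constant $c_3'\chi_0(z)/(r^d\log\chi_0(z))$, using $\log\chi_0(z)\asymp|\log\rho|$; the upper bound argument is unaffected.
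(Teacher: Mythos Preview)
Your approach matches the paper's: use the wrapping identity $C_z^{\T}(x)=C_z(x)+\sum_{u\neq 0}C_z(x+ru)$, control the sum from above via the massive decay estimate of Proposition~\ref{prop:srw}, and from below by comparing $C_z$ with the critical Green function on a truncated range of $u$.

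There is one step that does not work as written. Your justification of the pointwise lower bound $C_z(w)\ge c|w|^{-(d-2)}$ for $|w|\le 1/m_0(z)$ by ``isolating the $|k|\le 1/|w|$ part of the Fourier integral'' is only a heuristic: the Fourier integral for $C_z(w)$ is oscillatory, and restricting the domain of integration does not produce a lower bound on its value (the complementary region could in principle cancel the contribution you kept). The paper obtains the needed lower bound by writing $C_z(w)=C_{z_0}(w)-(C_{z_0}(w)-C_z(w))$, invoking the standard critical asymptotic $C_{z_0}(w)\asymp |w|^{-(d-2)}$, and bounding the difference---by $a_3\rho|w|^{-(d-4)}$ for $d>4$ (Lemma~\ref{lem:Gamma}, via \refeq{convest0}) and by $\mathrm{const}\,\rho^{1/2}$ for $d=3$ (via the Fourier computation \refeq{Cdif}). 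You already use precisely this $C_{z_0}+\Delta$ decomposition in your $d=4$ paragraph; applying it uniformly for all $d>2$ closes the gap and recovers the paper's argument verbatim. A smaller point: the near-critical decay bound you quote for the upper bound is Proposition~\ref{prop:srw}, which the paper derives from a heat-kernel estimate rather than ``directly from the Fourier representation''; either route is fine, but neither is a one-liner.
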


Note that both inequalities in \refeq{plateau-srw} hold as equalities when $z=z_0$,
since $\chi_0(z_0)=\infty$ by \refeq{chi0def}, and $C^{\T}_{z_0}(x)=\infty$
because simple random walk on the torus is recurrent.
We indicate in Remark~\ref{rk:Gamma} that $C_z(x)$ obeys \refeq{Gzzc}
(now with $\rho=z_0-z$),
and hence \refeq{plateaup}
also holds for $C_z^{\T}(x)$, now for all $p \ge 2$.
Thus Theorem~\ref{thm:plateau-srw} implies
that the  ``plateau'' concept also applies to simple random walk
(with an
unnatural logarithmic caveat
for $d=4$).

A different and earlier proof of Theorem~\ref{thm:plateau-srw} is
based on the local central limit
theorem \cite{Grim18,ZGFDG18,ZGDG20}.
In fact, the results of \cite{Grim18,ZGFDG18,ZGDG20} are more general and for $d=4$ are stronger since they
do not have the logarithmic correction in the lower bound.
Our proof instead uses the heat kernel estimate stated in \refeq{hk}
below (this has analogues for very general random walks), which
affords a relatively simple proof of \refeq{plateau-srw}.
As our principal interest is the weakly self-avoiding walk in dimensions $d>4$, we do not aim
for generality in Theorem~\ref{thm:plateau-srw}, nor attempt to eliminate the logarithm for $d=4$.

Theorem~\ref{thm:plateau-srw} has the following heuristic interpretation which provides
some insight into the origin of the plateau.
Consider the nearest-neighbour
torus random walk $S_n^{\T}$ subjected to $z$-dependent killing, i.e.,
the walk has length $N$
with geometric probability $\P(N=n)=(z\degree)^n(1-z\degree)$ for $n \ge 0$, with the
random variable $N$ independent of the walk's steps.
The two-point function $C^{\T}_{z}(x)$ is the expected number of visits to $x$
by the torus walk subjected to $z$-dependent killing:
\begin{equation}
\lbeq{CN}
    C_z^{\T}(x) = \E \Big(\sum_{n=0}^N \1_{S_n^{\T} =x}\Big)
    \qquad (x\in \T_r^d),
\end{equation}
as can be verified by computing the right-hand side via conditioning on $N$.
The expected length is $\E N = z\degree \chi_0(z)$, so
the susceptibility $\chi_0(z)=(1-z\degree)^{-1}$ is simply $1+\E N$.
As discussed above \refeq{chiTlechi}, for $r \ge 3$
walks on the torus are in one-to-one correspondence with walks on $\Z^d$ via
unfolding.
Thus,
a torus walk to $x$ unfolds to a walk on $\Z^d$ ending at $x$ or at a point
$x+ru$ with $u$ a nonzero point in $\Z^d$.
The term $C_{z}(x)$ in \refeq{plateau-srw} is the
expected number of visits to $x$ by torus walks from $0$ which unfold to walks on $\Z^d$
which end at $x$.
In the proof of Theorem~\ref{thm:plateau-srw},
the term $r^{-d}\chi_0$ arises from walks that wrap around the
torus---these unfold to walks on $\Z^d$ that end at $x+ru$
for some nonzero $u \in \Z^d$.  For $z$
with $z_0-z \le r^{-2}$, the expected length is at least $r^2$,
so the torus walk is well mixed (see \cite[Theorem~5.5]{LP17})
and its location is close to
uniformly random on the torus.
On average, it therefore spends time $r^{-d}\chi_0$ at each torus
point, resulting in the constant term in \refeq{plateau-srw}.

An alternate heuristic interpretation is the following, which has been pointed out in
\cite{WY14}.
The Fourier dual of the torus $\T_{r}^d$ is
$\hat\T_{r}^{d} = \frac{2\pi}{r}\T_{r}^d$.
Let $k \cdot x = \sum_{j=1}^d k_jx_j$ denote the dot product
of $k \in \hat \T_{r}^{d}$ with $x \in \T_{r}^d$.
The Fourier transform of $f: \T_{r}^d \to {\mathbb C}$
is defined by
$\hat{f}(k) = \sum_{x \in \T_{r}^d}f(x) e^{ik \cdot x}$
for $k \in \hat\T^{d}_{r}$, and
the inverse Fourier transform is $f(x)
    = \frac{1}{r^d} \sum_{k \in \hat\T_{r}^{d}} \hat{f}(k) e^{-ik \cdot x}$
for $x\in \T_{r}^d$.
In particular,
\begin{align}
\lbeq{CTFourier}
    C^{\T}_{z}(x) = \frac{1}{r^d} \sum_{k \in \hat\T_{r}^{d}} \hat{C}^{\T}_{z}(k) e^{-ik \cdot x}
    \qquad
    (x \in \T_r^d).
\end{align}
The Fourier transform $\hat C_z(k)$ on $\Z^d$ (see \refeq{Chatk})
and its torus counterpart $\hat C_z^{\T}(k)$ have
the same functional form $(1-z\degree \hat D(k))^{-1}$---only the domains for $k$ differ.
Thus the $k=0$ term in \refeq{CTFourier}
is $r^{-d}\hat{C}^{\T}_{z}(0) = r^{-d}\chi_0(z)$,
so the constant term in \refeq{plateau-srw} arises as the zero mode.
The sum over nonzero $k$ is a Riemann sum approximation to the Fourier integral over
the continuum torus $(\R / 2 \pi \Z )^d$
that equals the two-point function $C_z(x)$ for $\Z^d$ via inverse Fourier
transformation.  If the
Riemann sum has the same large-$x$ behaviour as the integral, then we would find that
$C^{\T}_{z}(x) - r^{-d}\chi_0(z)$ and $C_z(x)$ are comparable,
as in \refeq{plateau-srw}.
Verification of this last step is an oscillatory integral problem, which may be difficult.
Nevertheless this
heuristic discussion does support the idea that the plateau arises from the zero
mode.

\section{The lattice Green function}

Recall the definition of the lattice Green function $C_\mu(x)$ in \refeq{Czdef},
where now we write $\mu$ instead of $z$.
For the proof of Theorem~\ref{thm:mr},
we need a version of \refeq{Gmr} for $C_\mu(x)$, as well as estimates on the Fourier transform
of $C_\mu(x)$.  We develop these here.

\subsection{Massive decay}

We now prove an inequality of the form \refeq{Gmr} for $C_\mu(x)$.
Let $\mu_c = \frac{1}{\degree}= \frac{1}{2d}$.
For $\mu \in (0,\mu_c]$, we define $m_0(\mu)\ge 0$ to be the unique solution to
\begin{equation}
\lbeq{m0def}
    \cosh m_0(\mu) = 1+ \frac{1-\mu\degree}{2\mu}.
\end{equation}
In particular, $m_0(\mu_c)=0$,
$m_0$ is a strictly positive strictly decreasing function of $\mu \in (0,\mu_c)$, and
\begin{equation}
\lbeq{m0asy}
    m_0(\mu)^2 \sim  \frac{1}{\mu} -\degree \sim \frac{1}{\mu_c} (1- \mu/\mu_c)
    \qquad
    (\mu \to \mu_c).
\end{equation}
This $m_0(\mu)$ is the
exponential rate of decay of $C_\mu(x)$ in \refeq{LGasy}, and
$C_\mu(x) \le C_\mu(0)e^{-m_0(\mu)\|x\|_\infty}$
for all $x \in \Z^d$ and $\mu \in (0,\mu_c)$; both of these statements are
proved in \cite[Theorem~A.2]{MS93}.
The following proposition is a variation of the above exponential estimate,
with a power law correction as in \refeq{Gmr}.  By \refeq{LGasy}, it is impossible
for \refeq{srwbd} to hold with $a_1=1$ (at least for $d \neq 3$).

The proof of the proposition uses the fact there exist $a,A>0$ such that
the $n$-step transition probability obeys
\begin{equation}
\lbeq{hk}
    D^{*n}(x)
    \le A\frac{1}{n^{d/2}} e^{-a\|x\|_\infty^2/n}
    \qquad
    (n \ge \|x\|_\infty)
\end{equation}
(of course  $D^{*n}(x)=0$  when
$n < \|x\|_\infty$).  The heat kernel estimate \refeq{hk} is proved in \cite[Theorem~6.28]{Barl17}.
Unlike local central limit theorems
which give precise constants
(e.g., \cite{LL10}), \refeq{hk} gives an
exponential bound
for $x$ well beyond the diffusive scale, including for $|x|$ and $n$ of comparable size.

\begin{prop}
\label{prop:srw}
For $d>2$, there are constants $a_0>0$ and $a_1\in (0,1)$ such that
for all $\mu \in (0,\mu_c]$,
\begin{equation}
\lbeq{srwbd}
    C_\mu(x) \le a_0 \frac{1}{1\vee |x|^{d-2}}e^{-a_1 m_0(\mu)\|x\|_\infty}
    \qquad
    (x\in\Z^d).
\end{equation}
\end{prop}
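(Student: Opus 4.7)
The plan is to substitute the heat kernel bound \refeq{hk} into the series \refeq{Czdef} for $C_\mu(x)$ and balance the geometric decay $(\mu\degree)^n$ against $e^{-a\|x\|_\infty^2/n}$ via an AM-GM trade-off, after first using the explicit relationship \refeq{m0def} to translate $(\mu\degree)^n$ into a useful function of $m_0(\mu)$.

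First I would set $\lambda(\mu) := \log\bigl(1/(\mu\degree)\bigr)$. Solving \refeq{m0def} gives $\mu\degree = \bigl(1 + 2(\cosh m_0 - 1)/\degree\bigr)^{-1}$, so $\lambda(\mu) = \log\bigl(1 + 2(\cosh m_0(\mu) - 1)/\degree\bigr)$. An elementary analysis then produces the unified lower bound
\[
    \lambda(\mu) \ge b\,\min\bigl(m_0(\mu)^2,\, m_0(\mu)\bigr)
\]
with $b = b(d) > 0$, using $\lambda \sim m_0^2/d$ as $m_0 \to 0$ and $\lambda \sim m_0 - \log\degree$ as $m_0 \to \infty$. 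Substituting \refeq{hk} into \refeq{Czdef} and noting $D^{*n}(x)=0$ for $n<\|x\|_\infty$ yields
\[
    C_\mu(x) \le A \sum_{n \ge \|x\|_\infty} \frac{1}{n^{d/2}} e^{-h(n)},
    \qquad h(n) := n\lambda(\mu) + \frac{a\|x\|_\infty^2}{n}.
\]

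For $n \ge \|x\|_\infty$ there are two useful pointwise lower bounds on $h$: the trivial one $h(n) \ge n\lambda \ge \lambda\|x\|_\infty$ and the AM-GM inequality $h(n) \ge 2\sqrt{a\lambda}\,\|x\|_\infty$. Combined with the first step they give $h(n) \ge c\,m_0(\mu)\|x\|_\infty$ uniformly in $\mu \in (0, \mu_c]$ with $c = c(d) > 0$: the AM-GM bound handles the regime $m_0 \le 1$ (where $\sqrt\lambda \ge \sqrt{b}\,m_0$), while the trivial bound handles $m_0 \ge 1$ (where $\lambda \ge b m_0$). Writing $h(n) = \theta h(n) + (1-\theta) h(n)$ for a small $\theta > 0$ to be chosen, applying the uniform lower bound on one copy and keeping only the heat-kernel piece $(1-\theta)a\|x\|_\infty^2/n$ of the other, I obtain
\[
    e^{-h(n)} \le e^{-\theta c\, m_0(\mu)\|x\|_\infty}\, e^{-(1-\theta) a\|x\|_\infty^2/n}.
\]

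Finally, the residual sum $\sum_{n\ge\|x\|_\infty} n^{-d/2} e^{-(1-\theta)a\|x\|_\infty^2/n}$ is estimated by the substitution $n = \|x\|_\infty^2/u$, which converts it into $\|x\|_\infty^{2-d}\int_0^{\|x\|_\infty} u^{d/2-2} e^{-(1-\theta)au}\,du$; for $d > 2$ this is uniformly bounded by $\|x\|_\infty^{2-d}$ times the convergent integral $\int_0^\infty u^{d/2-2} e^{-(1-\theta)au}\,du$. Setting $a_1 := \theta c$ and taking $\theta$ small enough to force $a_1 < 1$ gives \refeq{srwbd} for $x \ne 0$ (using $\|x\|_\infty \asymp |x|$), and the case $x = 0$ is absorbed by the $1\vee|x|^{d-2}$ factor and handled by monotonicity: $C_\mu(0) \le C_{\mu_c}(0) < \infty$ for $d > 2$. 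The main obstacle is arranging the unified lower bound on $\lambda$: the AM-GM estimate alone cannot produce the correct linear-in-$m_0$ rate when $m_0$ is large (since then $2\sqrt{a\lambda} \sim 2\sqrt{am_0}$ is of smaller order than $m_0$), so one must switch to the elementary $h(n) \ge \lambda\|x\|_\infty$ in that regime; the condition $a_1 < 1$ is intrinsic because the splitting must reserve a fraction of the exponential decay to keep the residual sum convergent.
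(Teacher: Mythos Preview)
Your proof is correct and follows essentially the same approach as the paper: substitute the heat kernel bound \refeq{hk} into the series, use AM--GM to extract the factor $e^{-c\,m_0\|x\|_\infty}$, and estimate the residual sum $\sum_{n\ge\ell} n^{-d/2}e^{-c'\ell^2/n}$ by comparison with the Gamma-type integral obtained after the change of variables $u=\ell^2/n$. The only substantive difference is that the paper first reduces to $\mu$ close to $\mu_c$ (via the elementary bound $C_\mu(x)\le C_\mu(0)e^{-m_0\|x\|_\infty}$, as in \refeq{smallz1}--\refeq{smallz2}) so that only $\lambda\ge c\,m_0^2$ is needed, whereas you treat all $\mu\in(0,\mu_c]$ directly through the two-regime bound $\lambda\ge b\min(m_0^2,m_0)$; both routes are equally valid.
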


\begin{proof}
There is nothing to prove for $x=0$ since $C_{\mu_c}(0)<\infty$, so we assume $x\neq 0$.
We write $\ell=\|x\|_\infty$ and $m_0=m_0(\mu)$.
As in the discussion around \refeq{smallz1}--\refeq{smallz2}
(now with $C_\mu(0)$ in place of $\bubble(z)$), it suffices to consider
$\mu \in [\frac{1}{\degree}-\delta,\frac{1}{\degree}]$ for any small $\delta>0$.
For small enough $\delta$, there is a $c>0$ such that
$\mu\degree =1-(1-\mu\degree) \le
e^{-(1-\mu\degree)} \le
e^{-cm_0^2}$ by \refeq{m0asy}, so from \refeq{hk}
we obtain
\begin{align}
\lbeq{Cmuhk}
    C_\mu(x)
    & = \sum_{n=\ell}^\infty (\mu\degree)^n D^{*n}(x)
     \le A\sum_{n=\ell}^\infty e^{-cm_0^2 n}\frac{1}{n^{d/2}}e^{-a \ell^2/n}.
\end{align}
We apply the inequality $u^2+v^2 \ge 2uv$ with $u^2=cm_0^2n$ and $v^2=a \ell^2/(2n)$,
to obtain
(with $a_1=\sqrt{2ca}$)
\begin{align}
    C_\mu(x) & \le
    Ae^{-a_1m_0\ell} \sum_{n=\ell}^\infty  \frac{1}{n^{d/2}}e^{-a\ell^2/(2n)}.
\end{align}
The terms on the right-hand side increase while $n < a\ell^2/d$ and then decrease,
so by crudely bounding as in the proof of the integral test for series convergence,
\begin{align}
    \sum_{n=\ell}^\infty  \frac{1}{n^{d/2}}e^{-a\ell^2/(2n)}
    & \le
    \sum_{\ell \le n< a\ell^2/d}  \frac{1}{n^{d/2}}e^{-a\ell^2/(2n)}
    +
    \sum_{n\ge a\ell^2/d}  \frac{1}{n^{d/2}}
    \nnb & \le
    2\int_\ell^\infty \frac{1}{t^{d/2}}e^{-a\ell^2/(2t)}dt + O(\ell^{-(d-2)}).
    \nnb
    & =
    2 \frac{1}{\ell^{d-2}}
    \int_0^{\ell}   s^{(d-4)/2} e^{-as/2} ds + O(\ell^{-(d-2)})
    \qquad (\text{substitution $s=\ell^2/t$}).
\lbeq{EMbd}
\end{align}
Since the integral over $[0,\infty)$
converges, this gives an $O(\ell^{-(d-2)})$ upper bound and thus
proves \refeq{srwbd}.
\end{proof}

\subsection{Massive infrared bound}

Let $\mathbb{T}^d = (\R / 2 \pi \Z )^d$ denote\footnote{Not to be
confused with the discrete torus $\T_r^d$ in Section~\ref{sec:torus},
which will not reappear until Section~\ref{sec:plateaupf}.}
the continuous torus.
The Fourier transform of an absolutely summable function $f:\Z^d \to \C$ is defined by
\begin{equation}
    \hat f(k) = \sum_{x\in\Z^d}f(x)e^{ik\cdot x}
    \qquad
    (k \in \T^d),
\end{equation}
and the inverse Fourier transform is
\begin{equation}
    f(x) = \int_{\T^d}\hat{f}(k) e^{-i k \cdot x } \frac{dk}{(2\pi)^{d}}
    \qquad
    (x \in \Z^d).
\end{equation}
In particular, the transform of the step distribution $D$ is
$\hat D(k) = d^{-1}\sum_{j=1}^d \cos k_j$.
We define $A_\mu:\Z^d \to \R$ by
$A_\mu = \delta -\mu\degree D$, with $\delta$ the Kronecker delta
$\delta(x) = \delta_{0,x}$.
It follows from the definition of $C_\mu(x)$ in \refeq{Czdef}
that $C_\mu(x)=\delta_{0,x} + \mu\degree (D*C_\mu)(x)$, so
\begin{equation}
\lbeq{CAdelta}
    C_\mu * A_\mu=\delta.
\end{equation}
Since the Fourier transform converts convolutions to products, this implies that
\begin{equation}
\lbeq{Chatk}
    \hat C_\mu(k) = \frac{1}{\hat A_\mu(k)} = \frac{1}{1-\mu\degree\hat D(k)}.
\end{equation}

Throughout the paper, for $m \ge 0$ and $f:\Z^d \to \C$ we write
$f^{(m)}$ for the \emph{exponential tilt} of $f$:
\begin{equation}
    f^{(m)}(x) = f(x) e^{mx_1}
    \qquad (x=(x_1,\ldots,x_d)).
\end{equation}
Also, for a multi-index $\alpha=(\alpha_1,\ldots,\alpha_d)$
with each $\alpha_i \in \{0,1,2,\ldots\}$, we write $|\alpha|
=\sum_{j=1}^d \alpha_j$.
We will use the fact that $|k|^p+m_0^p \asymp (|k|+m_0)^p$  for any fixed $p\in\N$.

The next lemma is a massive infrared bound.
The purpose of its factor $\sigma$ is to keep
$m$ bounded away from $m_0(\mu)$, so that
the tilt in $C_\mu^{(m)}(x)$ does not remove all of the exponential decay from $C_\mu(x)$.
The decay remaining in $C_\mu^{(m)}(x)$ has rate proportional to $m_0(\mu)$
and is responsible for the $m_0$ term on the right-hand side of \refeq{Cirbd}.

\begin{lemma}
\label{lem:Ck}
Let $d>2$.
Fix $\sigma \in (0,1)$.
For any multi-index $\alpha$ with $|\alpha| \ge 0$,
there is a constant (depending on $\alpha,\sigma$) such that
for all $\mu \in [\frac{1}{2\degree},\frac{1}{\degree})$
and $m \in [0,\sigma m_0(\mu)]$,
\begin{equation}
\lbeq{Cirbd}
    |\nabla^\alpha \hat C^{(m)}_\mu(k)|
    \le
    {\rm const}
    \frac{1}{(|k| +m_0(\mu) )^{2+|\alpha|}}
    \qquad
    (k \in \T^d)
    .
\end{equation}
\end{lemma}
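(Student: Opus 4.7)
The plan is to reduce the bound on $\hat C^{(m)}_\mu$ to a bound on the denominator of an explicit rational expression, exactly as for the untilted case in \refeq{Chatk}, and then to estimate this denominator from below by $(|k|+m_0(\mu))^2$ using that $m$ is bounded away from $m_0(\mu)$ by the factor $\sigma<1$.

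First I would rewrite the tilted transform as a complex shift of the ordinary transform: since $C_\mu^{(m)}(x)=C_\mu(x)e^{mx_1}$,
\begin{equation*}
    \hat C_\mu^{(m)}(k)=\hat C_\mu(k-ime_1)=\frac{1}{\hat A_\mu(k-ime_1)},
\end{equation*}
where $\hat A_\mu(k-ime_1)=1-2\mu\bigl[\cos(k_1-im)+\sum_{j=2}^{d}\cos k_j\bigr]$. Expanding $\cos(k_1-im)=\cosh m\cos k_1+i\sinh m\sin k_1$, I would split the denominator into its real and imaginary parts and use the half-angle identity $\cos u=1-2\sin^{2}(u/2)$ to obtain
\begin{equation*}
    \mathrm{Re}\,\hat A_\mu(k-ime_1)
    = 2\mu(\cosh m_0-\cosh m)+4\mu\cosh m\,\sin^{2}(k_1/2)+4\mu\sum_{j=2}^{d}\sin^{2}(k_j/2),
\end{equation*}
using the defining identity $1-2\mu(d-1)=2\mu\cosh m_0$ from \refeq{m0def}.

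The key lower bound then comes from two observations. Since $\mu\ge\frac{1}{2\degree}$, the mass $m_0(\mu)$ is bounded above, so $\cosh m_0-\cosh m=2\sinh(\tfrac{m_0+m}{2})\sinh(\tfrac{m_0-m}{2})\gtrsim(m_0^{2}-m^{2})\ge(1-\sigma^{2})m_0^{2}$ whenever $m\le\sigma m_0$. Combined with $\sin^{2}(k_j/2)\asymp k_j^{2}$ on $\T^d$, this yields $\mathrm{Re}\,\hat A_\mu(k-ime_1)\gtrsim m_0(\mu)^{2}+|k|^{2}\asymp(|k|+m_0(\mu))^{2}$, and hence $|\hat A_\mu(k-ime_1)|\gtrsim(|k|+m_0(\mu))^{2}$, which is the $|\alpha|=0$ case of \refeq{Cirbd}.

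For $|\alpha|\ge1$, I would proceed by induction. Each partial $\partial_{k_j}\hat A_\mu(k-ime_1)$ is a trigonometric polynomial, uniformly bounded by a constant independent of $\mu,m,k$, and higher derivatives are bounded similarly. Writing $\nabla^{\alpha}\hat C_\mu^{(m)}(k)$ via Faà~di~Bruno (or directly by differentiating the identity $\hat A_\mu\hat C_\mu^{(m)}=1$ and solving for $\nabla^{\alpha}\hat C_\mu^{(m)}$), each term is a product of at most $|\alpha|+1$ factors of the form $1/\hat A_\mu$ and at most $|\alpha|$ uniformly bounded derivatives of $\hat A_\mu$, so the inductive lower bound on $|\hat A_\mu(k-ime_1)|$ gives the stated $(|k|+m_0(\mu))^{-(2+|\alpha|)}$ bound.

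The only delicate point I anticipate is the lower bound on the real part: one must be careful that the factor $(1-\sigma^{2})$ does not interact badly with the $|k|^{2}$ term, i.e., that the lower bound on $\mathrm{Re}\,\hat A_\mu$ combines additively into $c(|k|^{2}+m_0^{2})$ with a constant that is uniform on the full parameter range $\mu\in[\tfrac{1}{2\degree},\tfrac{1}{\degree})$ and $m\in[0,\sigma m_0]$. This is handled by treating separately the regions where $|k|$ is small (using $\sin^{2}(k_j/2)\asymp k_j^{2}$) and where $|k|$ is bounded away from zero (where the denominator is already bounded below by a positive constant independent of $m$ and $\mu$, since the Laurent-type expansion around $k=0$ is not needed there).
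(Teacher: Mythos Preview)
Your treatment of the case $|\alpha|=0$ is correct and essentially identical to the paper's: both compute the real part of $\hat A_\mu^{(m)}(k)$, use \refeq{m0def} to identify the constant term as $2\mu(\cosh m_0 - \cosh m)$, and bound this below by a multiple of $(1-\sigma^2)m_0^2$ while the $k$-dependent part contributes $|k|^2$. Your final paragraph about separating small and large $|k|$ is unnecessary---the additive lower bound $c(m_0^2+|k|^2)$ holds directly and uniformly---but it does no harm.

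The gap is in the $|\alpha|\ge 1$ case. If you only use that each $\nabla^{\alpha_i}\hat A_\mu^{(m)}$ is uniformly bounded, then a Fa\`a~di~Bruno term with $j$ numerator factors and denominator $\hat A^{j+1}$ is bounded by $C(|k|+m_0)^{-2(j+1)}$, and since $j$ can be as large as $|\alpha|$ you only get $(|k|+m_0)^{-2(|\alpha|+1)}$, not the claimed $(|k|+m_0)^{-(2+|\alpha|)}$. Already for $|\alpha|=1$ this gives $(|k|+m_0)^{-4}$ instead of $(|k|+m_0)^{-3}$. The paper closes this gap by observing (its \refeq{Aders}) that \emph{odd}-order derivatives of $\hat A_\mu^{(m)}$ are not merely bounded but are $O(|k|+m)$, since they are sines of $k_j$ (or $\sinh m$ times a cosine) which vanish at the origin. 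With this refinement, a term with $p$ odd-order factors among the $j$ numerator factors is bounded by $(|k|+m_0)^{p-2(j+1)}$; since the even-order factors each have $|\alpha_i|\ge 2$, one has $|\alpha|\ge p + 2(j-p)=2j-p$, i.e.\ $p-2(j+1)\ge -(|\alpha|+2)$, which is exactly the required exponent. You need to add this observation and the corresponding counting to make the inductive step work.
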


\begin{proof}
It follows from \refeq{CAdelta} that $C^{(m)}_\mu * A_\mu^{(m)}=\delta$.  Since
$\hat A^{(m)}_\mu(k) = 1-\mu\degree \hat D^{(m)}(k)$, we obtain
\begin{align}
\lbeq{CAhat}
    \hat C^{(m)}_\mu(k) & =
    \frac{1}{\hat A^{(m)}_\mu(k)} =
    \frac{1}{\hat A^{(m)}_\mu(0) + \mu\degree [\hat D^{(m)}(0) - \hat D^{(m)}(k)]}
    .
\end{align}
By definition,
\begin{align}
\lbeq{Dk}
    \hat D^{(m)}(k) & =
    \sum_{x\in \Z^d} D(x) e^{mx_1}e^{ik\cdot x}
    =
    \frac{1}{2d} \sum_{x\in\Z^d: |x|=1} e^{mx_1+ik\cdot x}
    \nnb & =
    i\frac 1d  \sinh m  \sin k_1 + \frac 1d  \cosh m \cos k_1 + \frac 1d  \sum_{j=2}^d \cos k_j
    ,
\end{align}
and hence
\begin{align}
    \hat D^{(m)}(0) - \hat D^{(m)}(k) & =
    -i\frac 1d   \sinh m \sin k_1 + \frac 1d  \cosh m(1-\cos k_1) + \frac 1d  \sum_{j=2}^d (1-\cos k_j )
    .
\end{align}
The origin of the formula for
the mass $m_0=m_0(\mu)$ in \refeq{m0def} is that it satisfies $\hat C^{(m_0)}_\mu(0)=\infty$, i.e.,
\begin{equation}
    0 = \hat A^{(m_0)}_\mu(0)
    = 1-2\mu [\cosh m_0  + d-1] ,
\end{equation}
an identity which can be verified from \refeq{m0def}.
Therefore, since $m \in [0,\sigma m_0(\mu)]$ and $\mu \ge \frac{1}{2\degree}$,
there is a constant depending on $\sigma$ such that
\begin{align}
\lbeq{Ahatlb}
    \hat A^{(m)}_\mu(0) &= \hat A^{(m)}_\mu(0) - \hat A^{(m_0)}_\mu(0)
    \nnb &
    = 2\mu [\cosh m_0 - \cosh m]
    \ge {\rm const}\, m_0^2.
\end{align}
Also,
\begin{align}
\lbeq{DDlb}
    | \hat D^{(m)}(0) - \hat D^{(m)}(k)| & \ge
    {\rm Re}\, [\hat D^{(m)}(0) - \hat D^{(m)}(k)]
    \nnb & \ge
    \frac 1d \sum_{j=1}^d (1-\cos k_j) \ge {\rm const}\, |k|^2.
\end{align}
Therefore, by using \refeq{Ahatlb}--\refeq{DDlb}
together with \refeq{CAhat} we obtain
\begin{equation}
\lbeq{Albd}
    |\hat A_\mu^{(m)}(k)| \ge {\rm const}( m_0^2 + |k|^2 ) \ge {\rm const}( m_0 + |k| )^2 .
\end{equation}
This proves the $\alpha = 0$ case of \refeq{Cirbd}.

For $|\alpha| \ge 1$, explicit differentiation of \refeq{Dk} gives
\begin{equation}
\lbeq{Aders}
    |\nabla^\alpha \hat A_\mu^{(m)}(k)| \le {\rm const} \times
    \begin{cases}
        (|k|+m) & (|\alpha| \; \text{odd})
        \\
        1 & (|\alpha| \; \text{even}).
    \end{cases}
\end{equation}
The significance of \refeq{Aders} is for small $k,m$.
Mixed partial derivatives of $\hat D^{(m)}(k)$ and hence of $\hat A_\mu^{(m)}(k)$
all vanish.
As examples,
we compute the first few derivatives (dropping the superscript $(m)$)
using
\refeq{Albd}--\refeq{Aders}:
\begin{align}
\lbeq{Cder12}
    \left| \nabla_i \frac{1}{ \hat A} \right|
    &
    = \left| \frac{\nabla_i\hat A }{\hat A^2} \right|
    \le  c_1 \frac{1}{ (|k|+m_0)^{3}},
    \qquad
    \left| \nabla_i^2 \frac{1}{\hat A} \right|
    \le \left| \frac{\nabla_i^2\hat A }{\hat A^2} \right|
    +
    \left| \frac{2( \nabla_i\hat A)^2 }{ \hat A^3} \right|
    \le c_2 \frac{1}{ (|k|+m_0)^{4}} ,
\end{align}
\begin{align}
\lbeq{Cder3}
    \left| \nabla_i^3 \frac{1}{\hat A} \right|
    &
    \le \left| \frac{\nabla_i^3\hat A }{\hat A^2} \right|
    +
    \left| \frac{6( \nabla_i\hat A)(\nabla_i^2 \hat A) }{ \hat A^3} \right|
    +
    \left| \frac{6( \nabla_i\hat A)^3 }{ \hat A^4} \right|
    \le c_3 \frac{1}{ (|k|+m_0)^{5}}  .
\end{align}
In higher order derivatives, differentiation of the terms arising from the quotient rule
at the previous order are of two types.  When a denominator is differentiated, an
additional factor $(\nabla_i \hat A)/\hat A$ is produced, and this worsens the upper
bound by a factor $(|k|+m_0)^{-1}$.  When a numerator is differentiated,
it either worsens the upper bound by a factor $(|k|+m_0)^{-1}$ (if an odd order derivative
in the numerator is differentiated) or it improves the upper bound by a factor
$(|k|+m_0)^{+1}$ (if an even order derivative in the numerator
is differentiated).  Thus, as claimed in \refeq{Cirbd},
we obtain
\begin{align}
    \left| \nabla_i^n \frac{1}{ \hat A} \right|
    &
    \le  {\rm const} \frac{1}{ (|k|+m_0)^{n+2}}
    .
\end{align}
This completes the proof.
\end{proof}

\section{Lace expansion}
\label{sec:le}

The lace expansion was introduced by Brydges and Spencer \cite{BS85} to prove
that the weakly self-avoiding walk is diffusive in dimensions $d>4$.
In the decades since 1985, the lace expansion has been adapted and extended to a broad range of
models and results \cite{Slad06,HH17book}.

We restrict attention henceforth to dimensions $d>4$ and sufficiently small $\beta>0$.
For the weakly self-avoiding walk, the lace expansion
\cite{BS85,MS93,Slad06}
produces an explicit formula for the $\Z^d$-symmetric function $\Pi_z: \Z^d \to \R$
which for $z\in [0,z_c)$ satisfies the convolution equation
\begin{equation}
\lbeq{lace}
    G_z(x) = \delta_{0,x} + z\degree (D*G_z)(x) + (\Pi_z* G_z)(x) \qquad (x \in \Z^d),
\end{equation}
or equivalently,
\begin{equation}
\lbeq{lacek}
    \hat G_z(k) = \frac{1}{1-z\degree \hat D(k) - \hat \Pi_z(k)}
    \qquad (k \in \T^d).
\end{equation}
We define
\begin{equation}
\lbeq{FFhat}
    F_z  = \delta - z\degree D  - \Pi_z , \qquad \hat F_z = 1 -z\degree\hat D -\hat\Pi_z.
\end{equation}
Then \refeq{lace}--\refeq{lacek} simplify to
\begin{equation}
\lbeq{GstarF}
     G_z * F_z  = \delta , \qquad \hat G_z(k) = \frac{1}{\hat F_z(k)}.
\end{equation}

In fact, $\Pi_z$ is given by an alternating series $\Pi_z(x)=\sum_{N=1}^\infty (-1)^N
\Pi_z^{(N)}(x)$, with each $\Pi_z^{(N)}(x)$ nonnegative and monotone increasing in $z$.
It is proven, e.g., in \cite{BHK18}
that there is a constant $K$ such that
\begin{equation}
\lbeq{PiNbd}
    \Pi_z^{(N)}(x) \le (K\beta)^N \frac{1}{1+|x|^{3(d-2)}}
    \qquad
    (N \ge 1, \, z \in [0,z_c],\, x\in \Z^d).
\end{equation}
Consequently, for any $s < 2d-6$, there is a constant $K_s$ such that
\begin{equation}
\lbeq{xsPi}
    \sum_{x\in \Z^d} |x|^s |\Pi_z(x) |
    \le
    \sum_{x\in\Z^d}\sum_{N=1}^\infty
    |x|^s \Pi_z^{(N)}(x) \le K_s\beta
    \qquad
    (z \in [0,z_c]).
\end{equation}
The inequality \refeq{xsPi} is often referred to as a \emph{diagrammatic estimate}
since it is motivated by a diagrammatic representation of $\Pi_z$ (see
\cite{BS85} or \cite[Section~5.4]{MS93}).
A similar diagrammatic estimate (as in \cite[Theorem~5.4.4]{MS93}) gives
\begin{equation}
\lbeq{dzPi}
    \sum_{x\in \Z^d} |\partial_z \Pi_z(x) |
    \le\sum_{x\in\Z^d}\sum_{N=1}^\infty
    \partial_z \Pi_z^{(N)}(x) \le K' \beta
    \qquad
    (z \in [0,z_c]).
\end{equation}

Since $\chi(z)=\hat G_z(0)$ and
$\chi(z_c)=\infty$, we have $0=\hat F_{z_c}(0) = 1-z_c\degree -\hat\Pi_{z_c}(0)$.
By \refeq{xsPi} with $s=0$, this implies that
\begin{equation}
\lbeq{zcasy}
    z_c - \frac{1}{\degree} \le K_0 \beta.
\end{equation}
With $s=1$, it also implies that
there is a $z^* \in (z,z_c)$ such that
\begin{align}
    \chi(z)^{-1} &= \hat F_z(0) = \hat F_z(0) - \hat F_{z_c}(0)
    \nnb
    & = \degree(z_c-z) + \hat\Pi_{z_c}(0)-\hat \Pi_z(0)
    \nnb & =
    \degree(z_c-z) + \partial _z\hat\Pi_{z}(0)|_{z=z^*} (z_c-z),
\end{align}
using the mean-value theorem for the last equality.
It follows from \refeq{dzPi} and the dominated convergence theorem
that the coefficient in the last term approaches $\partial _z\hat\Pi_{z}(0)|_{z=z_c}
=O(\beta)$ as $z \to z_c$.  This proves that
\begin{equation}
\lbeq{chiasy}
    \chi(z) \sim A(1-z/z_c)^{-1} \qquad (z \to z_c)
\end{equation}
with $A^{-1}=-z_c\partial_z\hat F_{z_c}(0)=z_c (\degree + \partial_z\hat \Pi_{z_c}(0))
=1 + O(\beta)$, using \refeq{dzPi}--\refeq{zcasy} in the last equality.

\section{Asymptotic formula for the mass}
\label{sec:masy}

In this section, we prove the statement from \refeq{masy} that for
$d>4$ and sufficiently small $\beta>0$,
\begin{equation}
\lbeq{masy2}
    m(z) \sim {\rm const}\,(1-z/z_c)^{1/2}  \qquad   (z \to z_c),
\end{equation}
with constant equal to $\degree^{1/2}+O(\beta)$.
The essence of the proof is as in \cite[Section~6.5]{MS93}
(originally in \cite{HS92a}), which itself
is based on \cite{Hara90}.

Let $z<z_c$, $m<m(z)$, and $\chi^{(m)}(z)=\sum_{x\in\Z^d}G_z^{(m)}(x)$.
The tilted version of \refeq{GstarF} is $G^{(m)}_z*F^{(m)}_z = \delta$
and hence
\begin{equation}
\lbeq{lacemk}
    \hat G_z^{(m)}(k) = \frac{1}{\hat F_z^{(m)}(k)}
    =
    \frac{1}{1-z\degree \hat D^{(m)}(k) - \hat \Pi_z^{(m)}(k)}.
\end{equation}
In particular
(recall \refeq{Dk}),
\begin{equation}
\lbeq{chimz}
    \frac{1}{\chi(z)}-\frac{1}{\chi^{(m)}(z)}
    =
    \hat F_z(0) - \hat F_z^{(m)}(0)
    = 2z [\cosh m -1] + [\hat\Pi_z^{(m)}(0)-\hat\Pi_z(0)].
\end{equation}
An argument\footnote{Briefly,
if $\chi(m(z))$ were finite then $G_z^{(m(z))}$ would decay
exponentially and this contradicts the fact that $m(z)$ is by \refeq{mzdef} the
exponential decay rate of $G_z$.}
based on the Lieb--Simon
inequality gives $\chi^{(m)}(z) \to \infty$ as $m \to m(z)$,
exactly as in the proof of \cite[(6.5.7)]{MS93}.
Therefore, when we take the limit $m \to m(z)$ (from the left) in \refeq{chimz} we get
\begin{align}
\lbeq{chimz1}
    \frac{1}{\chi(z)}
    & = 2z [\cosh m(z) -1] + [\hat\Pi_z^{(m(z))}(0)-\hat\Pi_z(0)]
    ,
\end{align}
provided we can justify that the
limit $\lim_{m \to m(z)}\hat\Pi_z^{(m)}(0)=\hat\Pi_z^{(m(z))}(0)$
exists.
The next proposition takes care of this last point, via dominated convergence.
The proof of Proposition~\ref{prop:Pim} is deferred to later in this section.

\begin{prop}
\label{prop:Pim}
Let $d>4$ and let $\beta$ be sufficiently small.
Let\footnote{We expect that the proposition in fact remains true for all $s \in [0, 2d-6)$ but its
restriction to $s \in [0,d-2]$ is sufficient for our needs.} $s \in [0,d-2]$.
There is a constant $K_s'$ such that, uniformly in $z \in [\frac{1}{\degree},z_c)$,
\begin{equation}
\lbeq{Pims}
    \sum_{x\in\Z^d}  |x|^s  \sum_{N=1}^\infty |\Pi_z^{(N,m(z))} (x)| \le K_s' \beta.
\end{equation}
\end{prop}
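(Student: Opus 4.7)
The plan is to follow the proof of the standard diagrammatic estimate \refeq{xsPi}, but to absorb the tilt factor $e^{m(z)x_1}$ into the diagrams. Recall from \cite[Chapter~5]{MS93} (and as used in \cite{BHK18} to establish \refeq{PiNbd}--\refeq{xsPi}) that $\Pi_z^{(N)}(x)$ admits a diagrammatic representation with a pointwise estimate schematically of the form $\Pi_z^{(N)}(x)\le \beta^N\sum_{\vec y}\prod G_z(\cdot)$, in which one distinguished chain of two-point functions --- a ``backbone'' --- connects $0$ to $x$, and the remaining factors encode lace intersections.

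First I would distribute the tilt along the backbone. Since the backbone edge-differences sum to $x$, the identity $e^{m x_1}=\prod_{\text{bb edges}} e^{m(y_{j+1}-y_j)_1}$ converts every backbone $G_z(\cdot)$ into its tilt $G_z^{(m)}(\cdot)$, while leaving the off-backbone two-point functions untilted. Inserting the moment via $|x|^s\le C_s\sum_j |y_{j+1}-y_j|^s$ and summing over $x$ then reduces \refeq{Pims} to a finite family of convolution/moment diagrams built from the ingredients $G_z^{(m(z))}$, $G_z$, and their $s$-th moments. By $\Z^d$-symmetry of $\Pi_z^{(N)}$, the quantity in \refeq{Pims} equals its $\cosh$-version, so it suffices to bound this reduced sum at $m=m(z)$.

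Second, I would promote the untilted diagrammatic inputs to their tilted analogues. The essential ingredient is a tilted infrared bound of the form $\hat G_z^{(m)}(k)\le \mathrm{const}\,\hat C_{\mu(m)}^{(m)}(k)$, uniformly in $z\in[\tfrac{1}{\degree},z_c)$ and $m\in[0,m(z))$, together with its pointwise consequence $G_z^{(m(z))}(x)\le C/(1\vee |x|^{d-2})$. These are obtained from the tilted convolution identity \refeq{lacemk} by a bootstrap argument varying $m$, exactly in the style of \cite{HS92a} and \cite[Section~6.5.2]{MS93}: one defines bootstrap functions measuring the closeness of $\hat G_z^{(m)}$ to $\hat C_{\mu(m)}^{(m)}$ in the appropriate weighted norm, uses the massive infrared bound of Lemma~\ref{lem:Ck} together with the $m=0$ estimates \refeq{PiNbd}--\refeq{xsPi} as a starting point, and propagates continuously in $m$ up to $m(z)$. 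Continuity is guaranteed because $\chi^{(m)}(z)\to\infty$ only as $m\uparrow m(z)$, by the Lieb--Simon argument cited around \refeq{chimz1}.

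Third, given these tilted two-point function bounds, the convolution/moment diagrams from the first step are finite whenever $d>4$ and $s<2d-6$, by exactly the convolution and moment estimates used to prove \refeq{xsPi}. The combinatorial factor $\beta^N$ per $N$-lace diagram is unchanged by the tilt, so geometric summation over $N$ yields the final bound $K'_s\beta$.

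The main obstacle is the second step. The tilt parameter must be pushed all the way to $m(z)$, so the bootstrap has to close uniformly in $z\in[\tfrac{1}{\degree},z_c)$ even though at $m=m(z)$ one has $\chi^{(m)}(z)=\infty$. The point is that although the zero-Fourier-mode blows up, the weighted quantities controlling $\hat\Pi_z^{(m)}$ must remain bounded. Tracking constants through the bootstrap so that they do not degenerate as $z\uparrow z_c$ (equivalently $m(z)\downarrow 0$) is the technical heart of the argument, and is what forces the restriction $s<2d-6$ inherited from \refeq{xsPi}.
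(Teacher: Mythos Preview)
Your overall plan---a bootstrap in $m$ combined with diagrammatic estimates---is exactly the paper's strategy. The gap lies in how you couple the moment $|x|^s$ with the tilt $e^{mx_1}$, and in what you demand of the bootstrap.

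By placing both the tilt and the moment along the same backbone, you are forced in step~2 to invoke a \emph{pointwise} bound $G_z^{(m(z))}(x)\le C/(1\vee|x|^{d-2})$ on the tilted two-point function, which you assert is a ``consequence'' of the tilted infrared bound. It is not: the bootstrap of \cite{HS92a} and \cite[Section~6.5]{MS93} delivers only a Fourier-space infrared bound, from which Parseval gives control of the tilted bubble $\bubble^{(m)}(z)=\|G_z^{(m)}\|_2^2$, but not pointwise decay. Extracting pointwise decay of $G_z^{(m)}$ is precisely the content of Theorem~\ref{thm:mr}, whose proof in Section~\ref{sec:pfmr1} \emph{uses} Proposition~\ref{prop:Pim}. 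Your argument is therefore circular as stated.

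The paper avoids this by distributing the tilt along \emph{one} side of the $N$-loop diagram and the moment $|x|^s$ along the \emph{other} side (see the proof of Lemma~\ref{lem:diagram-m}). On the tilted side only $\ell_2$ norms of $G_z^{(m)}$ enter, and these are exactly the tilted bubble---the bootstrap variable. On the moment side the factors are \emph{untilted} two-point functions $G_z$, so the needed supremum $\sup_y |y|^s G_z(y)$ is controlled by the a~priori critical decay \refeq{Gzcub}. Thus the only input required from the bootstrap is a bound on $\bubble^{(m)}(z)$; no tilted pointwise estimate is ever needed, and the circularity disappears.
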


By \refeq{chiasy},
the left-hand side of \refeq{chimz1} is asymptotic to $A^{-1}(1-z/z_c)$ as $z\to z_c$.
Since $m(z) \to 0$, the first term on the right-hand side of \refeq{chimz1}
is asymptotic to $z_c m(z)^2$.
Given $\epsilon \in (0,2]$, it is an exercise with the Maclaurin series for $\cosh t$ to prove that
\begin{equation}
\lbeq{coshbd}
    |\cosh t -1 - \frac 12 t^2| \le {\rm const}\, t^{2+\epsilon} \cosh t.
\end{equation}
We apply \refeq{coshbd} with $\epsilon = \min\{2,d-4\}$, so $2+\epsilon \le d-2$,
and  conclude from Proposition~\ref{prop:Pim} and the fact that $\Pi_z(x)=\Pi_z(-x)$
that the last term on the right-hand side
 of \refeq{chimz1} is
\begin{align}
    \hat\Pi_z^{(m(z))}(0)-\hat\Pi_z(0)
    &
    = \sum_{x\in \Z^d} (\cosh(m(z) x_1)-1 ) \Pi_z(x)
    \nnb & =
    \frac 12 m(z)^2  \sum_{x\in\Z^d} x_1^2 \Pi_z(x)
    + O(m(z)^{2+\epsilon})
    .
\end{align}
Since $m(z) \to 0$ as $z \to z_c$, we conclude from the above that
\begin{align}
\lbeq{Am2}
    A^{-1}(1-z/z_c)  & \sim
    m(z)^2\Big[ z_c + \frac 12 \sum_{x\in\Z^d} x_1^2 \Pi_{z_c}(x)  \Big].
\end{align}
This gives,
as desired,
\begin{align}
     m(z)^2 \sim c (1-z/z_c) \qquad (z \to z_c).
\end{align}
From \refeq{Am2}, we observe that the constant
$c$ is given by
 $c =  A^{-1} [z_c + \frac 12 \sum_{x\in\Z^d} x_1^2 \Pi_{z_c}(x)]^{-1} = \degree +O(\beta)$,
 since $A=1+O(\beta)$ (see below \refeq{chiasy}) and $z_c = \degree^{-1}+O(\beta)$ by
 \refeq{zcasy}.

It remains to prove Proposition~\ref{prop:Pim}, which we will do using  the
following lemma.
Its hypothesis involves the tilted bubble diagram
\begin{equation}
\lbeq{Bmdef}
    \bubble^{(m)}(z) = \sum_{x\in \Z^d} G_z^{(m)}(x)^2.
\end{equation}

\begin{lemma}
\label{lem:diagram-m}
Fix $z \in [\frac{1}{\degree},z_c)$ and $m\ge 0$.  Let $s \in [0,d-2]$.
Suppose that there is a constant $\kappa$ such that $\bubble^{(m)}(z) \le \kappa$.
Then there is a constant $K_s'$ depending on $\kappa$ (and not on $m,z$) such that
for sufficiently small $\beta$ (depending on $\kappa$),
\begin{equation}
\lbeq{Pibdsum}
    \sum_{x\in\Z^d} |x|^s \sum_{N=1}^\infty \Pi_z^{(N,m)}(x) \le K_s'\beta.
\end{equation}
\end{lemma}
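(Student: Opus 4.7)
The plan is to extend the proof of the untilted diagrammatic bound \refeq{xsPi} to the exponentially tilted setting. The key structural observation is that exponential tilting commutes with the Feynman-diagram representation underlying the lace expansion: since the lace expansion expresses $\Pi_z^{(N)}(x)$ pointwise as a sum over diagrams whose edges are factors of $G_z$ evaluated at displacements $y_1,\ldots,y_k$ that telescope to $x$, the identity $e^{mx_1} = \prod_i e^{m y_{i,1}}$ distributes the tilt across edges, so the same pointwise bound holds for $\Pi_z^{(N,m)}(x)$ with every occurrence of $G_z$ replaced by $G_z^{(m)}$. Thus the entire diagrammatic machinery of \cite[Section~5.4]{MS93} applies verbatim to the tilted coefficients.

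For the $s=0$ contribution I would carry out the standard diagrammatic reduction: summing each diagram over its internal vertices produces products of tilted bubbles, and the hypothesis $\bubble^{(m)}(z) \le \kappa$ plays exactly the role of the untilted bubble bound. Standard counting then gives $\sum_x \Pi_z^{(N,m)}(x) \le c\beta(c'\beta\kappa)^{N-1}$, and summing over $N$ for sufficiently small $\beta$ (depending on $\kappa$) yields an $O(\beta)$ bound with constant depending only on $\kappa$.

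For $s>0$ I would distribute the moment via the elementary inequality $|x|^s \le C_{N,s}\sum_i |y_i|^s$, which places the weight on a single edge of each diagram while the remainder reduces to bubbles as before. This reduces the problem to bounding moment-weighted edge quantities such as $\sum_y |y|^s G_z^{(m)}(y)^2$ uniformly in $m,z$. These are controlled by passing to Fourier: by Parseval, $\sum_y |y|^{2\alpha} G_z^{(m)}(y)^2 = (2\pi)^{-d}\, \|\nabla^\alpha \hat G_z^{(m)}\|_{L^2(\T^d)}^2$ for integer $\alpha$. Using the tilted lace equation \refeq{lacemk} together with Lemma~\ref{lem:Ck} and the $s=0$ bound on $\hat\Pi_z^{(m)}$ just established, one obtains $|\nabla^\alpha \hat G_z^{(m)}(k)| \le C(|k|+\tilde m)^{-(2+|\alpha|)}$ for some $\tilde m \ge 0$; integration yields convergent moments precisely when $s < 2d-6$, with non-integer $s$ handled by interpolation between consecutive even integers.

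The main obstacle is the potential circularity in controlling the self-energy $\hat\Pi_z^{(m)}$, which must be bounded before the lace equation can be inverted to yield the infrared bound on $\hat G_z^{(m)}$ required for Step~4. I would resolve this by executing the proof strictly in the order above: the $s=0$ estimate uses only the bubble hypothesis and yields bounds of the form $|\hat\Pi_z^{(m)}(0)| = O(\beta)$, which suffice to invert \refeq{lacemk} and produce derivative bounds on $\hat G_z^{(m)}$ with constants depending only on $\kappa$. This in turn supplies the moment estimate needed to close the argument for general $s < 2d-6$, mirroring in structure the analogous bootstrap arrangement in \cite{HS92a,MS93,Slad20_lace}.
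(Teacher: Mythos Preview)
Your structural claim in the first paragraph is not quite right: in the diagrammatic bound on $\Pi_z^{(N)}(x)$ the edge displacements do not all telescope to $x$. The $N$-loop diagram has a ladder structure with two edge-disjoint paths from $0$ to $x$ together with rungs; only the displacements along \emph{one} of those paths telescope to $x$, so the tilt $e^{mx_1}$ can be distributed along one side only, converting those factors to $G_z^{(m)}$ while the opposite side and the rungs remain untilted. This is precisely what happens in \refeq{Pi4m}. The distinction is essential, because the untilted side is where the paper routes the moment weight $|x|^s$.

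Your $s>0$ argument then has a circularity that the $s=0$ step does not resolve. To obtain $|\nabla^\alpha \hat G_z^{(m)}(k)| \le C(|k|+\tilde m)^{-(2+|\alpha|)}$ from \refeq{lacemk} you must control $\nabla^\alpha \hat\Pi_z^{(m)}$, i.e., the $|\alpha|^{\rm th}$ moment of $\Pi_z^{(m)}$---exactly the quantity the lemma is meant to bound. The $s=0$ case gives only $|\hat\Pi_z^{(m)}(k)|=O(\beta)$, not its derivatives, and Lemma~\ref{lem:Ck} concerns $\hat C_\mu^{(m)}$ rather than $\hat G_z^{(m)}$. Indeed, in the paper the derivative bounds on $\hat G_z^{(m)}$ (Lemma~\ref{lem:Girbd}) are established only \emph{after} Proposition~\ref{prop:Pim}, which itself rests on the present lemma. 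The paper's proof sidesteps all of this: it distributes $|x|^s$ along the \emph{untilted} side of the ladder and controls the single weighted line via $\sup_y |y|^s G_z(y) \le \sup_y |y|^s G_{z_c}(y)$, finite by the already-known critical decay \refeq{Gzcub}; the remaining lines are handled by $\ell_2$ norms bounded by $\kappa^{1/2}$. No moment of a tilted two-point function, and no appeal to inverting \refeq{lacemk}, is needed.
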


\begin{proof}
This is a standard diagrammatic estimate, just as in \cite[Corollary~6.5.2]{MS93}.
We therefore only sketch the argument.
The estimate is however better than \cite[Corollary~6.5.2]{MS93}
by allowing a larger range of $s$, as a consequence of \refeq{Gzc} which
was not available when \cite[Corollary~6.5.2]{MS93} was proved.

We illustrate the idea of the proof with the 4-loop term $\Pi^{(4)}$,
which obeys the estimate
\begin{equation}
\lbeq{Pi4}
    \Pi_z^{(4)}(x)
    \le \beta^4 \sum_{u,v\in\Z^d} G_z(u)^2 G_z(v) G_z(u-v) G_z(x-u) G_z(x-v)^2.
\end{equation}
The proof of \refeq{Pi4} is a small modification of the proof of \cite[Theorem~5.4.2]{MS93}
(the small parameter $\beta$ is more explicit here and this is a simplification).
With an exponential tilt, we obtain
\begin{equation}
\lbeq{Pi4m}
    \Pi_z^{(4,m)}(x) \le \beta^4 \sum_{u,v\in\Z^d} G_z(u)G_z^{(m)}(u) G_z(v) G_z(u-v) G_z^{(m)}(x-u) G_z(x-v)^2.
\end{equation}
We multiply by $|x|^s$ and sum over $x\in\Z^d$,
use the inequality $|x|^s \le 2^s(|v|^s + |x-v|^s)$, and
consider the effect of each of the two terms on the right-hand side for the resulting sum over $x$.  For example, one of these two terms is
\begin{equation}
    T =
    \beta^4
    \sum_{x,u,v\in\Z^d} G_z(u)G_z^{(m)}(u) |v|^s G_z(v) G_z(u-v) G_z^{(m)}(x-u) G_z(x-v)^2.
\end{equation}
It follows from \cite[Lemma~5.4.3]{MS93} that the sum can be bounded by the product
of $\sup_v |v|^s G_z(v)$ times the $\ell_2$ norm of each of the other six factors $G_z$
or $G_z^{(m)}$.  By hypothesis, those $\ell_2$ norms are at most $\kappa^{1/2}$, and the supremum is bounded
by a constant due to \refeq{Gzcub} and the restriction $s \le d-2$.
The result is a bound of order  $\beta^4 \kappa^3$.

In more detail, by definition of $T$,
\begin{align}
    T & \le \beta^4
    \left(\sup_v |v|^s G_z(v)\right)
    \sup_u \left(\sum_{x,v} G_z(u-v) G_z^{(m)}(x-u) G_z(x-v)^2 \right)
    \left( \sum_u  G_z(u)G_z^{(m)}(u) \right)
     .
\end{align}
The first factor on the right-hand side is bounded via \refeq{Gzcub}.
The last factor is bounded by $\kappa$, by the Cauchy--Schwarz inequality and by hypothesis.
The middle factor, after replacement of the summation indices by $x\mapsto x+u$ and $v
\mapsto v+u$, followed by replacement of $x$ by $y=x-v$, becomes
\begin{align}
    \sum_{x,v} G_z(v) G_z^{(m)}(x) G_z(x-v)^2
    & =
    \sum_{v,y} G_z(v) G^{(m)}_z(y+v) G_z(y)^2
    \nnb & \le
    \left( \sup_y \sum_v G_z(v) G^{(m)}_z(y+v) \right) \sum_y G_z(y)^2
    \le
    \kappa ^2
     ,
\end{align}
where we used the Cauchy--Schwarz inequality for the supremum.
Overall, this gives a bound on $T$ of order $\beta^4 \kappa^3$.  This procedure can be
mechanised for general $N$ as in \cite[Section~5.4]{MS93}.

As a result, for general $N$, $\Pi^{(N)}$ can be bounded similarly by factoring
the weight $e^{mx_1}$ along one side of the $N$-loop diagram and factoring $|x|^s$
along the other side of the diagram.
The bound on the $N^{\rm th}$ term in \refeq{Pibdsum} is of order $N\beta^N \kappa^{N-1}$, where the factor $N$
accounts for the use of the triangle inequality to distribute $|x|^s$ along fewer than $N$
diagram lines.  Summation over $N$ gives \refeq{Pibdsum},
for $\beta$ small depending on $\kappa$.
\end{proof}

\begin{proof}[Proof of Proposition~\ref{prop:Pim}]
Fix $z \in [\frac{1}{\degree},z_c)$.
We define the critical simple random walk bubble diagram
\begin{equation}
\lbeq{B0def}
    \bubble_0 = \sum_{x\in \Z^d} C_{1/\degree}(x)^2
    = \int_{\T^d} \frac{1}{(1-\hat D(k))^2} \frac{dk}{(2\pi)^d} < \infty,
\end{equation}
where the second inequality holds by the Parseval relation and \refeq{CAdelta}.
By Lemma~\ref{lem:diagram-m}, it suffices to prove that
\begin{equation}
\lbeq{Bm}
    \bubble^{(m(z))}(z)  \le 2\bubble_0.
\end{equation}
We prove \refeq{Bm} with a bootstrap argument.
In many applications of the lace expansion, the bootstrap argument is used to produce a forbidden
interval for the parameter $z$.  Here, we instead produce a forbidden interval
for the mass parameter $m$; this strategy was first used in \cite{Hara90} for
percolation and was subsequently applied also to self-avoiding walk in \cite{HS92a,MS93}.

To prove \refeq{Bm}, we will prove that:
\\
\indent (i) $\bubble^{(0)}(z) \le \frac 32 \bubble_0$,
and
\\
\indent
(ii) if for $m \in(0, m(z))$ we assume $\bubble^{(m)}(z) \le 3 \bubble_0$ then in
fact $\bubble^{(m)}(z) \le 2 \bubble_0$.
\\
By (ii), the interval $(2\bubble_0,3\bubble_0]$ is forbidden for values of $\bubble^{(m)}(z)$
when $m\in(0, m(z))$.
By (i), $\bubble^{(0)}(z)$ lies below this forbidden interval.
Since $\bubble^{(m)}(z)$ is continuous in $m$ by monotone convergence, it follows
from the Intermediate Value Theorem that
$\bubble^{(m)}(z) \le 2\bubble_0$ for all $m<m(z)$.  By monotone convergence, this
implies \refeq{Bm}.  So it remains to prove (i) and (ii).

The main work is to prove (ii); along the way we also prove (i).
By \refeq{Bmdef}, \refeq{lacemk} and the Parseval relation, for $m\in [0,m(z))$ we have
\begin{equation}
\lbeq{Bmk}
    \bubble^{(m)}(z)
    = \int_{\T^d} \frac{1}{|\hat F_z^{(m)}(k)|^2} \frac{dk}{(2\pi)^d}.
\end{equation}
The denominator of the integrand involves
\begin{align}
\lbeq{F0k}
    \hat F^{(m)}_z(k) & = \hat F^{(m)}_z(0) + [\hat F^{(m)}_z(k) - \hat F^{(m)}_z(0)].
\end{align}
Thus, by the formula for $\hat F^{(m)}(k)$ in \refeq{lacemk},
\begin{align}
    |\hat F^{(m)}_z(k)| & \ge {\rm Re}[\hat F^{(m)}_z(k) - \hat F^{(m)}_z(0)]
    \nnb & =
    z\degree  {\rm Re}\, [\hat D^{(m)}(0) - \hat D^{(m)}(k)]
    +
    {\rm Re}\, [\hat \Pi_z^{(m)}(0) - \hat \Pi_z^{(m)}(k)],
\end{align}
where the inequality holds since $\hat F^{(m)}_z(0)$ is real and positive.
Since $z\degree \ge 1$ by assumption and because
${\rm Re}\, [\hat D^{(m)}(0) - \hat D^{(m)}(k)]
\ge 1-\hat D(k)$
by \refeq{DDlb},
\begin{align}
    |\hat F^{(m)}_z(k)| & \ge
    1-\hat D(k)
    +
    \sum_{x\in \Z^d} (1-\cos (k\cdot x)) \Pi^{(m)}_z(x)
    \nnb & \ge
    1-\hat D(k) -
    \sum_{x\in \Z^d}  (1-\cos (k\cdot x)) |\Pi_z^{(m)}(x)|
    .
\lbeq{Fmlb}
\end{align}
To bound the last sum in \refeq{Fmlb}, we use the facts that
$1-\cos t \le \frac 12 t^2$ for $t\in \R$,
and
$2\pi^{-2}t^2 \le 1-\cos t$ for $t\in [-\pi,\pi]$.
Since $|k\cdot x | \le |k|\,|x|$,
\begin{align}
    \sum_{x\in \Z^d} (1-\cos (k\cdot x)) |\Pi_z^{(m)}(x)|
    & \le
    \frac{1}{2} |k|^2 \sum_{x\in \Z^d} |x|^2 |\Pi_z^{(m)}(x)|
    \nnb & \le
    [1-\hat D(k)] \frac{d\pi^2}{4} \sum_{x\in \Z^d} |x|^2 |\Pi_z^{(m)}(x)|
    .
\lbeq{Fmlb1}
\end{align}
By Lemma~\ref{lem:diagram-m},
if we assume that $\bubble^{(m)}(z) \le 3\bubble_0$ (as we do for $m\in(0,m(z))$ in (ii))
then
the right-hand side of \refeq{Fmlb1} is $O(\beta)[1-\hat D(k)]$,
so we obtain
\begin{align}
\lbeq{Fmlb2}
    |\hat F^{(m)}_z(k)| &
    \ge {\rm Re}[\hat F^{(m)}_z(k) - \hat F^{(m)}_z(0)]
    \ge
    [1-O(\beta)](1-\hat D(k)).
\end{align}
But this implies the infrared bound
\begin{equation}
\lbeq{Gmirbd0}
    |\hat G^{(m)}_z(k)| \le (1+O(\beta)) \frac{1}{1-\hat D(k)},
\end{equation}
which by \refeq{Bmk} and \refeq{B0def}
implies that $\bubble^{(m)}(z) \le 2 \bubble_0$, and completes the proof of item (ii).

Finally, we prove (i).
For (i) we do not have an {\it a priori} assumption that
$\bubble^{(0)}(z) \le 3\bubble_0$ as was used the previous paragraph.
Nevertheless, the proof of (ii) more than suffices for the proof of (i).
Indeed, the above argument also implies \refeq{Gmirbd0} when $m=0$, using
\refeq{xsPi} to bound the right-hand side  of \refeq{Fmlb1} instead of making
use of the {\it a priori} assumption, and
this implies that $\bubble^{(0)}(z) \le \frac 32 \bubble_0$.
This completes the proof of both items (i) and (ii) in the
bootstrap argument, and concludes the proof.
\end{proof}

Note that \refeq{Fmlb2}, \refeq{Gmirbd0} and \refeq{Pibdsum}, although initially conditional on the bootstrap hypothesis,
hold unconditionally now that the bootstrap argument has been completed.
In particular, we have now proved that \refeq{Bm} does in fact hold,
and hence for $z \in [\frac{1}{\degree},z_c)$ we have
\begin{equation}
\lbeq{Bm-bis}
    \bubble^{(m(z))}(z)  \le 2\bubble_0,
\end{equation}
which is a statement of a tilted bubble condition.

\section{Proof of Theorem~\ref{thm:mr}}
\label{sec:pfmr1}

We now complete the proof of Theorem~\ref{thm:mr} by proving the inequality \refeq{Gmr}
for $G_z(x)$.  The proof is based on an extension of the method of \cite{Slad20_lace}.

\subsection{Isolation of leading term}

We extend the method of \cite{Slad20_lace} to include positive mass.
Let $\lambda  > 0$, $\mu\in[0,\frac{1}{\degree}]$, and $A_\mu = \delta -\mu\degree D$.
Since $C_\mu * A_\mu=\delta$ and $F_z *G_z = \delta$ by \refeq{CAdelta} and \refeq{GstarF},
we have
\begin{align}
    G_z
    & = \lambda C_\mu + \delta * G_z - \lambda C_\mu * \delta
    \nnb &= \lambda C_\mu + C_\mu * E_{z,\lambda,\mu} * G_z
    \quad \text{with} \quad E_{z,\lambda,\mu}= A_\mu - \lambda F_z.
\lbeq{Edef}
\end{align}
As in \cite{HHS03,Slad20_lace}, we choose $\lambda_{z}$ and $\mu_{z}$ so that
\begin{equation}
\lbeq{Ezero}
    \sum_{x\in \Z^d}E_{z}(x)
    =
    \sum_{x\in \Z^d} |x|^2 E_{z}(x)
    =
    0,
\end{equation}
where we set $E_z = E_{z,\lambda_{z},\mu_{z}}$.
The solution to the two linear equations \refeq{Ezero} in the two unknowns $\lambda,\mu$ is
\begin{align}
    \lambda_{z} & = \frac{1}{1-\hat\Pi_z(0) + \sum_x |x|^2 \Pi_z(x)},
\lbeq{lambdaz}
    \\
    \mu_{z}\degree
    & =
    1-\lambda_{z} \hat F_z(0)
    \nnb & =
    \frac{z\degree  + \sum_x |x|^2 \Pi_z(x)}
    {\hat F_z(0) + z\degree  + \sum_x |x|^2 \Pi_z(x)}.
\lbeq{muzm}
\end{align}
The term $\hat F_z(0)=\chi(z)^{-1}$ is positive for $z<z_c$.
We are interested in the case $z \in [\frac{1}{\degree},z_c)$, so
$z\degree \ge 1$.  By \refeq{xsPi}, the $\Pi$ terms in \refeq{lambdaz}--\refeq{muzm}
are small, in particular $\lambda_z=1+O(\beta)$.
Also, the right-hand side of \refeq{muzm} lies in $(0,1)$ and therefore
$\mu_{z}\in (0,\frac{1}{\degree})$ is
subcritical.
Explicit calculation using the definition of $E_z$ from \refeq{Edef}
and of $\mu_{z}$ from \refeq{muzm} leads to
\begin{align}
    E_z &= (1-\lambda_z)(\delta - D) - \lambda_z \hat\Pi_z(0)D
    + \lambda_z \Pi_z,
    \\
\lbeq{Emformula}
        E_z^{(m)} &= (1-\lambda_z)(\delta - D^{(m)}) - \lambda_z \hat\Pi_z(0)D^{(m)}
    + \lambda_z \Pi_z^{(m)}.
\end{align}
Multiplication of \refeq{Edef} by $e^{mx_1}$ gives
\begin{equation}
    G^{(m)}_z = \lambda_z C^{(m)}_{\mu_{z}} + f^{(m)}_z
    \quad\text{with}\quad
    f^{(m)}_z  = C^{(m)}_{\mu_{z}} * E^{(m)}_{z} * G^{(m)}_z .
\lbeq{fm}
\end{equation}
We will show that $\lambda_z C^{(m)}_{\mu_{z}}$ gives the main contribution
to $G^{(m)}_z$, with $f^{(m)}_z$ smaller by a factor $\beta$.

\subsection{The key ingredient}

The key ingredient in the proof of the main result \refeq{Gmr} is the following proposition.

\begin{prop}
\label{prop:h}
Let $d>4$ and let $\beta$ be sufficiently small.
Let $z \in [\frac{1}{\degree},z_c)$ and $m \in [0, \frac 12 m(z)]$.
There is a constant $A_1>0$ (independent of $m,z,\beta$) such that
\begin{equation}
\lbeq{nablafbd}
    \int_{\T^d} |\nabla^\alpha \hat f_z^{(m)}(k)| \frac{dk}{(2\pi)^d}    \le A_1 \beta
    \qquad
    (|\alpha| \le d-2).
\end{equation}
\end{prop}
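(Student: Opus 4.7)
My plan is to exploit the product structure $\hat f_z^{(m)} = \hat C_{\mu_z}^{(m)}\hat E_z^{(m)}\hat G_z^{(m)}$ and distribute derivatives via the Leibniz rule
\[
\nabla^\alpha \hat f_z^{(m)} = \sum_{\alpha_1+\alpha_2+\alpha_3 = \alpha}\binom{\alpha}{\alpha_1,\alpha_2,\alpha_3}\nabla^{\alpha_1}\hat C_{\mu_z}^{(m)}\,\nabla^{\alpha_2}\hat E_z^{(m)}\,\nabla^{\alpha_3}\hat G_z^{(m)},
\]
and then bound each of the three factors separately. For $\hat C_{\mu_z}^{(m)}$ I apply Lemma~\ref{lem:Ck}, for $\hat E_z^{(m)}$ I exploit the vanishing moments \refeq{Ezero}, and for $\hat G_z^{(m)}$ I prove an analog of Lemma~\ref{lem:Ck} via the lace expansion.

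To invoke Lemma~\ref{lem:Ck} I first check that $m \le \sigma m_0(\mu_z)$ for some $\sigma<1$: combining \refeq{muzm} and \refeq{chiasy} gives $1-\mu_z\degree = \lambda_z\chi(z)^{-1} \sim \lambda_z A^{-1}(1-z/z_c)$, so by \refeq{m0asy} $m_0(\mu_z)^2 \sim \degree\lambda_z A^{-1}(1-z/z_c)$, and combined with \refeq{masy2} this yields $m(z)/m_0(\mu_z) = 1+O(\beta)$ as $z\to z_c$ with uniform boundedness on $[\frac{1}{\degree},z_c)$; thus for small $\beta$ the hypothesis $m\le\frac 12 m(z)$ implies $m\le\frac 34 m_0(\mu_z)$, and Lemma~\ref{lem:Ck} delivers $|\nabla^{\alpha_1}\hat C_{\mu_z}^{(m)}(k)|\le c(|k|+m_0(\mu_z))^{-2-|\alpha_1|}$. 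For $\hat E_z^{(m)}$, Taylor-expanding $e^{mx_1+ik\cdot x}$ order by order in $(k,m)$: orders $n=1,3$ vanish by axis-reflection parity of $E_z$ (surviving monomials would require each $x_j$ power to be even, contradicting odd $n$), and orders $n=0,2$ vanish by the defining identities $\sum E_z=\sum|x|^2 E_z=0$ of \refeq{Ezero} together with the permutation symmetry that reduces each $\sum x_j^2 E_z$ to $\frac{1}{d}\sum|x|^2 E_z$ and kills cross terms. Hence $|\hat E_z^{(m)}(k)|\le c\beta(|k|+m)^4$ near the origin, and one derives
\[
|\nabla^{\alpha_2}\hat E_z^{(m)}(k)|\le c\beta(|k|+m)^{(4-|\alpha_2|)_+}\text{ locally},\quad \le c\beta\text{ globally,}
\]
using the moment bound $\sum_x|x|^{|\alpha_2|}|E_z^{(m)}(x)|\le c\beta$ from Proposition~\ref{prop:Pim}, valid because $|\alpha_2|\le d-2<2d-6$. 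For $\hat G_z^{(m)}=1/\hat F_z^{(m)}$ I repeat the Lemma~\ref{lem:Ck} argument: subtracting $\hat F_z^{(m(z))}(0)=0$ from $\hat F_z^{(m)}(0)$ exactly as in \refeq{chimz1} gives $\hat F_z^{(m)}(0) = 2z[\cosh m(z)-\cosh m] + [\hat\Pi_z^{(m(z))}(0)-\hat\Pi_z^{(m)}(0)]$, which for $m\le\frac 12 m(z)$ is $\ge c\,m(z)^2 \asymp c\,m_0(\mu_z)^2$ using $\sum_x x_1^2|\Pi_z^{(m')}(x)|\le c\beta$ uniformly in $m'\le m(z)$ (a consequence of Proposition~\ref{prop:Pim} and $\cosh$-monotonicity). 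Together with ${\rm Re}[\hat F_z^{(m)}(k)-\hat F_z^{(m)}(0)]\ge (1-O(\beta))(1-\hat D(k))\ge c|k|^2$ from the proof of \refeq{Fmlb2}, this delivers $|\hat F_z^{(m)}(k)|\ge c(|k|+m_0(\mu_z))^2$, and the quotient rule (with $|\nabla^\gamma\hat\Pi_z^{(m)}|\le c\beta$ globally and a local refinement to $c\beta(|k|+m)$ for odd $|\gamma|$ from vanishing of odd moments of $\Pi_z$) yields $|\nabla^{\alpha_3}\hat G_z^{(m)}(k)|\le c(|k|+m_0(\mu_z))^{-2-|\alpha_3|}$.

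Assembling the three estimates in the Leibniz sum, and using $m\le m_0(\mu_z)$, each integrand is pointwise bounded by $c\beta(|k|+m_0(\mu_z))^{-\nu}$ where $\nu=|\alpha|$ when $|\alpha_2|\le 4$ and $\nu = 4+|\alpha|-|\alpha_2|$ when $|\alpha_2|\ge 4$; in both cases $\nu\le d-2 < d$, so the integral over $\T^d$ is uniformly bounded in $z,m$ and of order $\beta$. The main obstacle is the derivative-level massive infrared bound on $\hat G_z^{(m)}$, which decisively goes beyond the Section~\ref{sec:masy} bootstrap that controls only $\chi^{(m)}(z)=\hat G_z^{(m)}(0)$: the essential new input is the improved lower bound $\hat F_z^{(m)}(0)\ge c\,m_0(\mu_z)^2$ rather than merely $>0$, and this leans on the asymptotic matching $m(z)^2\sim m_0(\mu_z)^2$ already established in Section~\ref{sec:masy}, combined with the tilted diagrammatic moment bounds of Proposition~\ref{prop:Pim} propagated through the quotient rule up to order $d-2$.
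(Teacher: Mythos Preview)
Your approach is essentially the paper's: Leibniz-expand $\hat f_z^{(m)}=\hat C_{\mu_z}^{(m)}\hat E_z^{(m)}\hat G_z^{(m)}$, control $\hat C$ via Lemma~\ref{lem:Ck} after checking $m\le\sigma m_0(\mu_z)$ (the content of Lemma~\ref{lem:mm0}), control $\hat E$ by Taylor expansion exploiting \refeq{Ezero} (Lemma~\ref{lem:Ebd}), and control $\hat G$ by a massive infrared bound on $\hat F_z^{(m)}$ propagated through the quotient rule (Lemma~\ref{lem:Girbd}). Your lower bound $\hat F_z^{(m)}(0)\ge c\,m(z)^2\asymp c\,m_0(\mu_z)^2$ is exactly the paper's \refeq{F0m2}, and your final power count matches the paper's.

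There is one overclaim that you should correct. You assert $|\hat E_z^{(m)}(k)|\le c\beta(|k|+m)^4$ (and $(4-|\alpha_2|)_+$ for derivatives), and separately the odd-$|\gamma|$ refinement $|\nabla^\gamma\hat\Pi_z^{(m)}|\le c\beta(|k|+m)$ up to $|\gamma|=d-2$. Both require moments of $\Pi_z^{(m)}$ of order $4$ (for the fourth-order Taylor remainder) and of order $d-1$ (for the refinement at top order), but Proposition~\ref{prop:Pim} only delivers moments of order $s<2d-6$. For $d=5$ this gives $s<4$, so neither the fourth moment nor the refinement at $|\gamma|=3$ is available; your Taylor argument then only yields $|\hat E_z^{(m)}(k)|=o(\beta(|k|+m)^3)$, which is precisely what the paper records in Lemma~\ref{lem:Ebd} (see also its footnote). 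Likewise the paper's proof of Lemma~\ref{lem:Girbd} singles out $d=5$: there $\nabla^3\hat\Pi_z^{(m)}$ gets only the crude bound $O(\beta)$, but since it appears solely in the term $\nabla^3\hat F/\hat F^2$ of the quotient rule, the resulting $(|k|+m_0)^{-4}$ is harmless. With the corrected exponent $3$ in place of $4$, your assembly gives $\nu\le 1+|\alpha|\le d-1<d$, still integrable. So the gap is cosmetic rather than structural, but as written your $\hat E$ bound and your odd-derivative refinement are unjustified at $d=5$.
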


Before proving Proposition~\ref{prop:h}, we show that it leads to a proof
of \refeq{Gmr} and thereby concludes the proof of Theorem~\ref{thm:mr}.
For this we need to anticipate a conclusion of Lemma~\ref{lem:mm0}, where it
is proved that
if $z \in [\frac{1}{\degree},z_c)$
then
\begin{equation}
\lbeq{mmu}
     \frac 12 m(z) \le \frac 23 m_0(\mu_{z}).
\end{equation}
Here $m_0(\mu)$ is the simple random walk mass defined in \refeq{m0def}; according
to \refeq{m0asy} it vanishes at $\frac{1}{\degree}$ as
$m_0(\mu)^2 \sim \degree (1-\degree\mu)$.

\begin{proof}[Proof of Theorem~\ref{thm:mr}]
Let $z \in [\frac{1}{\degree},z_c)$ and
set $\tilde m =  \frac 12 a_1 m(z)$, with $a_1<1$ the constant of Proposition~\ref{prop:srw}.
Then $\tilde m \le \frac 12 m(z)$.
We use the fact that the smoothness of the Fourier transform $\hat f_z^{(m)}$
controls the decay of $f_z^{(m)}$, in the sense that  Proposition~\ref{prop:h}
implies that
$|f_z^{(\tilde m)}(x)| \le  A_1'\beta (1\vee |x|^{d-2})^{-1}$.
The proof of this, which is simply integration by parts, can be found in
\cite[Corollary~3.3.10]{Graf14}.
Therefore, by \refeq{fm},
\begin{align}
\lbeq{GCbd}
    G_z^{(\tilde m)}(x) & \le \lambda_{z} C_{\mu_{z}}^{(\tilde m)}(x)  + A_1'\beta
    \frac{1}{1\vee |x|^{d-2}}.
\end{align}
By Proposition~\ref{prop:srw},
\begin{equation}
\lbeq{Cmubd}
    C_{\mu_{z}}^{(\tilde m)}(x)
    = C_{\mu_z}(x) e^{\tilde mx_1}
    \le a_0 \frac{1}{1\vee |x|^{d-2}} e^{-(a_1m_0(\mu_{z})-\tilde m)\|x\|_\infty}.
\end{equation}
By \refeq{mmu},
$\tilde m = \frac 12 a_1 m(z) \le   \frac 23 a_1 m_0(\mu_{z})$.
As noted below \refeq{muzm},  $\lambda_z=1+O(\beta)$.
With \refeq{GCbd}--\refeq{Cmubd},
this shows that by taking $\beta$ sufficiently small we can obtain
\begin{equation}
\lbeq{Gbd}
    G_z^{(\tilde m)}(x) \le (1+O(\beta))a_0\frac{1}{1\vee |x|^{d-2}} + A_1'\beta \frac{1}{1\vee |x|^{d-2}} \le 2 a_0\frac{1}{1\vee |x|^{d-2}} ,
\end{equation}
and hence
\begin{equation}
    G_z(x) \le 2 a_0\frac{1}{1\vee |x|^{d-2}} e^{-\frac 12 a_1 m(z) x_1}.
\end{equation}
Without loss of generality, by the symmetry of $G_z$ we may assume that $x_1=\|x\|_\infty$.
Since $\|x\|_\infty \ge d^{-1/2}|x|$, this proves that
\refeq{Gmr} holds with $c_0=2a_0$ and $c_1 = \frac 12 a_1 d^{-1/2}$.
This completes the proof.
\end{proof}

\subsection{Proof of Proposition~\ref{prop:h}}

To complete the proof of \refeq{Gmr}, it remains to prove Proposition~\ref{prop:h}
and \refeq{mmu}.
By \refeq{fm},
\begin{equation}
\lbeq{fprod}
    \hat f^{(m)}_z  = \hat C^{(m)}_{\mu_{z}} \hat E^{(m)}_{z} \hat G^{(m)}_z ,
\end{equation}
so to prove Proposition~\ref{prop:h}
we need estimates on derivatives of each of the three factors on the right-hand side of \refeq{fprod}.
Lemmas~\ref{lem:Girbd} and \ref{lem:Ebd} give the estimates we need for $\hat G^{(m)}_z$
and $\hat E^{(m)}_{z}$, respectively.
Lemma~\ref{lem:Ck} will give the required estimate for
$\hat C_{\mu_{z}}^{(m)}$, when combined with the relation between $m_0(\mu_z)$
and $m(z)$ claimed in \refeq{mmu} and established in Lemma~\ref{lem:mm0}.

\begin{lemma}
\label{lem:Girbd}
Let $d>4$ and let $\beta$ be sufficiently small.
Let $z \in [\frac{1}{\degree},z_c)$, $m \in [0, \frac 12 m(z)]$, and $0 \le |\alpha| \le d-2$.
There is a constant (independent of $z,m,k,\beta,\alpha$) such that
\begin{equation}
\lbeq{Girbd}
    |\nabla^\alpha \hat G^{(m)}_z(k)|
    \le
    {\rm const}
    \frac{1}{(|k| +m )^{2+|\alpha|}}
    \qquad (k \in \T^d)
    .
\end{equation}
\end{lemma}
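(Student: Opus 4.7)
The plan is to write $\hat G_z^{(m)} = 1/\hat F_z^{(m)}$ via \refeq{lacemk} and apply the iterated quotient-rule bookkeeping of Lemma~\ref{lem:Ck}. It suffices to establish (i) the lower bound $|\hat F_z^{(m)}(k)| \ge c(|k|+m)^2$; (ii) the plain upper bound $|\nabla^\alpha \hat F_z^{(m)}(k)| \le C$ for all $|\alpha| \le d-2$; and (iii) the refined first-derivative bound $|\nabla_i \hat F_z^{(m)}(k)| \le C(|k|+m)$. With these three ingredients, the same quotient-rule accounting displayed in \refeq{Cder12}--\refeq{Cder3} reproduces \refeq{Girbd}: the refined bound (iii) is needed only for first-order factors, because any quotient-rule term containing a higher-order derivative of $\hat F$ already has enough denominator savings to absorb a bounded multiplicative factor (note that $|k|+m$ is bounded on the admissible domain).

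For (i), I would bound $|\hat F_z^{(m)}(k)| \ge \mathrm{Re}\,\hat F_z^{(m)}(k) = \hat F_z^{(m)}(0) + \mathrm{Re}[\hat F_z^{(m)}(k)-\hat F_z^{(m)}(0)]$ and treat the two summands separately. The mass identity obtained by letting $m'\uparrow m(z)$ in \refeq{chimz} (as justified in Section~\ref{sec:masy}) gives
\[
\hat F_z^{(m)}(0) = 2z[\cosh m(z)-\cosh m] + [\hat\Pi_z^{(m(z))}(0)-\hat\Pi_z^{(m)}(0)].
\]
For $m \le m(z)/2$, the first term is bounded below by a positive absolute constant times $m(z)^2$, while the $\Pi$-difference is $O(\beta)m(z)^2$ upon Taylor-expanding $\cosh$ and applying Proposition~\ref{prop:Pim}, yielding $\hat F_z^{(m)}(0) \ge c m(z)^2 \ge c m^2$. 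For the $k$-dependence I would repeat \refeq{Fmlb}--\refeq{Fmlb2} (now unconditional, since the Section~\ref{sec:masy} bootstrap has closed) to obtain $\mathrm{Re}[\hat F_z^{(m)}(k)-\hat F_z^{(m)}(0)] \ge (1-O(\beta))(1-\hat D(k)) \ge c|k|^2$. Adding the two gives $|\hat F_z^{(m)}(k)| \ge c(|k|^2+m^2) \ge (c/2)(|k|+m)^2$.

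Ingredient (ii) follows from direct differentiation of $z\degree \hat D^{(m)}$ as in \refeq{Aders}, together with $|\nabla^\alpha \hat\Pi_z^{(m)}(k)| \le \sum_x |x|^{|\alpha|}|\Pi_z^{(m)}(x)|$, which is $O(\beta)$ by Proposition~\ref{prop:Pim} since $|\alpha| \le d-2 < 2d-6$ for $d>4$; the restriction $|\alpha| \le d-2$ in the lemma statement traces precisely to this moment condition. For (iii), the $\Z^d$-symmetry of $\Pi_z$ gives $\sum_x x_i \Pi_z(x)=0$, so
\[
\nabla_i \hat\Pi_z^{(m)}(k) = i\sum_x x_i \Pi_z(x)[e^{mx_1+ik\cdot x}-1],
\]
and the elementary estimate $|e^{mx_1+ik\cdot x}-1| \le (|k|+m)|x|\,e^{m|x_1|}$ together with the reflection-symmetry inequality $\sum_x g(x) e^{m|x_1|} \le 2\sum_x g(x)e^{mx_1}$ (valid when $g\ge 0$ is $\Z^d$-symmetric) reduces this to at most a constant multiple of $(|k|+m)\sum_x |x|^2 |\Pi_z^{(m)}(x)| = O(\beta)(|k|+m)$, again via Proposition~\ref{prop:Pim} with $s=2$. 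The hardest step is (i), specifically the bound $\hat F_z^{(m)}(0) \ge cm^2$: it is exactly the critical exponent relation $\nu=\tfrac12$ proved in Section~\ref{sec:masy} that makes this come out proportional to $m(z)^2$ (and hence to $m^2$); without this input, the $m$-dependence of \refeq{Girbd}, which drives the exponential factor in \refeq{Gmr}, would be lost.
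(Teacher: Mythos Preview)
Your proof is correct and follows the paper's strategy: write $\hat G_z^{(m)} = 1/\hat F_z^{(m)}$, establish the lower bound $|\hat F_z^{(m)}(k)| \ge c(|k|+m)^2$ via \refeq{chimz1} and \refeq{Fmlb2} exactly as the paper does, and then control derivatives through the quotient rule. Your bookkeeping is in fact slightly more economical than the paper's. The paper reproduces the full odd/even pattern of \refeq{Aders} for $\hat\Pi_z^{(m)}$, which costs moments of $\Pi_z^{(m)}$ up to order $d-1$; for $d=5$ this equals the threshold $2d-6$ of Proposition~\ref{prop:Pim} and forces a separate argument for the term $(\nabla_i^3\hat F)/\hat F^2$. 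You instead observe that only the \emph{first}-derivative factor needs the refined bound $|\nabla_i\hat F_z^{(m)}|\le C(|k|+m)$, since any Fa\`a di Bruno term with $q-1$ numerator factors of total order $|\alpha|$ and $n_1$ first-order factors satisfies $2q-n_1\le 2+|\alpha|$, so the resulting bound $C(|k|+m)^{-(2q-n_1)}$ can be upgraded to $C'(|k|+m)^{-(2+|\alpha|)}$ using the boundedness of $|k|+m$. This route needs moments only up to order $d-2<2d-6$ and thus handles all $d>4$ uniformly, sidestepping the paper's $d=5$ caveat.
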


\begin{proof}
We have already proved a weaker version
of the case $\alpha=0$ of \refeq{Girbd} in \refeq{Gmirbd0}, with $m=0$ on the right-hand side.
To improve \refeq{Gmirbd0}, we first observe that
the inequality
\begin{equation}
\lbeq{FFk2}
    {\rm Re}\, [\hat F^{(m)}_z(k) - \hat F^{(m)}_z(0)] \ge {\rm const}\,|k|^2
\end{equation}
follows directly from \refeq{Fmlb2}.
To prove \refeq{Girbd} for $\alpha=0$,
we will prove that  $\hat F^{(m)}_z(0) \ge {\rm const}\, m^2$.  This indeed completes the
proof, since with \refeq{FFk2} it gives
\begin{equation}
\lbeq{Fkm}
    |\hat F^{(m)}_z(k)| \ge {\rm Re}\, F^{(m)}_z(k)
    = \hat F^{(m)}_z(0) +{\rm Re}\, [\hat F^{(m)}_z(k) - \hat F^{(m)}_z(0)]
    \ge {\rm const}\, (m^2 + |k|^2),
\end{equation}
which is the $\alpha=0$ case of \refeq{Girbd}.

The proof that  $\hat F^{(m)}_z(0) \ge {\rm const}\, m^2$ is as follows.
By \refeq{zcasy},
$z_c-\frac{1}{\degree} = O(\beta)$, so by \refeq{masy} $m(z) =O(\beta^{1/2})$
is small.
As noted below \refeq{chimz},  $\hat F_z^{(m(z))}(0)=0$, and hence by \refeq{Dk}
\begin{align}
    \hat F_z^{(m)}(0) &= \hat F_z^{(m)}(0) - \hat F_z^{(m(z))}(0)
    \nnb & =
    z\degree [\hat D^{(m(z))}(0) - \hat D^{(m)}(0)]
    + [\hat \Pi_z^{(m(z))}(0) - \hat \Pi_z^{(m)}(0)]
    \nnb & =
    2z [\cosh m(z) - \cosh m] + \sum_{x\in\Z^d} \big(\cosh (m(z)x_1) - \cosh (mx_1)\big) \Pi_z(x).
\end{align}
For the first term, we use $m\le \frac 12 m(z)$ and the fact that $m(z)=O(\beta^{1/2})$
to obtain
\begin{equation}
    2z [\cosh m(z) - \cosh m] = z(m(z)^2-m^2) +O(m(z)^4)
    = z(m(z)^2-m^2) +O(\beta m(z)^2).
\end{equation}
For the $\Pi$ term we use the elementary inequality
\begin{align}
    0 \le \cosh t - \cosh s \le (t-s)\sinh t \le (t-s)t \cosh t
    \qquad
    (0 \le s \le t)
\end{align}
to conclude that
\begin{align}
    \sum_{x\in\Z^d} \big(\cosh (m(z)x_1) - \cosh (mx_1)\big) |\Pi_z(x)|
    & \le m(z)^2 \sum_{x\in\Z^d} x_1^2   \sum_{N=1}^\infty \Pi_z^{(N,m(z))}(x).
\end{align}
The right-hand side is $O(\beta m(z)^2)$ by
Proposition~\ref{prop:Pim}, so the above leads to
\begin{equation}
\lbeq{F0m2}
    \hat F_z^{(m)}(0) = z(m(z)^2-m^2)  +O(\beta m(z)^2).
\end{equation}
Since $m \le \frac 12 m(z)$ by assumption,
this gives $\hat F_z^{(m)}(0) \ge {\rm const}\, m^2$ and the proof for $\alpha=0$
is complete.

Examples of
the first few derivatives of $\hat G_z^{(m)}$ are, with $\hat F = \hat F_z^{(m)}$,
\begin{align}
\lbeq{Gder12}
    \nabla_i \hat G_z^{(m)}
    &
    = -\frac{\nabla_i\hat F }{\hat F^2},
    \qquad
    \nabla_i^2 \hat G_z^{(m)}
    =
    -\frac{\nabla_i^2\hat F }{\hat F^2}
    +
    \frac{2( \nabla_i\hat F)^2 }{ \hat F^3}  ,
\end{align}
\begin{align}
\lbeq{Gder3}
    \nabla_i^3 \hat G_z^{(m)}
    &
    =
    - \frac{\nabla_i^3\hat F }{\hat F^2}
    +
     \frac{6( \nabla_i\hat F)(\nabla_i^2 \hat F) }{ \hat F^3}
    -
    \frac{6( \nabla_i\hat F)^3 }{ \hat F^4} .
\end{align}
The denominators are bounded using the bound
$\hat  F_z^{(m)}(k) \ge {\rm const}\, (|k|+m)^2$ from \refeq{Fkm},
exactly as in the proof of Lemma~\ref{lem:Ck}, and
we again need to show that large powers in denominators are compensated by the numerators.
This works essentially in the same way as in the proof of Lemma~\ref{lem:Ck}.
For the numerators, as in \refeq{Aders} the $\hat D^{(m)}$ terms are bounded using
\begin{equation}
\lbeq{Dders}
    |\nabla^\alpha \hat D_\mu^{(m)}(k)| \le {\rm const} \times
    \begin{cases}
        (|k|+m) & (|\alpha| \; \text{odd})
        \\
        1 & (|\alpha| \; \text{even}),
    \end{cases}
\end{equation}
and it suffices to prove the same estimate for $\nabla^\alpha \hat \Pi^{(m)}_z$.
In fact, it can be extrapolated from \refeq{Gder12}--\refeq{Gder3} that there is just one
term in $\nabla^\alpha\hat{G}_z^{(m)}$ that requires an estimate on
$\nabla^\alpha \hat \Pi^{(m)}_z$ with $|\alpha|=d-2$, namely a term proportional to
$\hat F^{-2}\nabla_i^\alpha\hat F $ with $|\alpha|=d-2$.  Such a term is relatively
small, since the numerator is bounded by a constant due to Proposition~\ref{prop:Pim},
so this term is $O((|k|+m)^{-4})$ which is better than the required
bound $O((|k|+m)^{-2-|\alpha|})$ when $|\alpha|\ge 3$.
It therefore suffices to prove the analogue of \refeq{Dders}
for $\nabla^\alpha \hat \Pi^{(m)}_z$ only for $|\alpha|\le d-3$.

By symmetry, $\hat \Pi^{(m)}_z$ can be written as
\begin{equation}
    \hat \Pi^{(m)}_z(k) =
    \sum_{x\in\Z^d} \cos(k\cdot x) \cosh(mx_1)\Pi_z(x) + i
    \sum_{x\in\Z^d}\sin(k\cdot x) \sinh (mx_1) \Pi_z(x) .
\end{equation}
To estimates its $k$-derivatives, we use the inequalities
\begin{align}
\lbeq{cosder}
    |\nabla^\alpha \cos (k\cdot x)| &
    \le
    \begin{cases}
    |x|^{|\alpha|} & (\text{$|\alpha|$ even})
    \\
    |x|^{|\alpha|+1}|k| & (\text{$|\alpha|$ odd}),
    \end{cases}
    \\
    |\nabla^\alpha \sin (k\cdot x)| &
    \le
    |x|^{|\alpha|} ,
    \\
    \sinh (mx_1)  &
    \le
    m|x_1|\cosh (mx_1).
\end{align}
The factor $|k|$ in \refeq{cosder} is bounded, so this gives
\begin{align}
    |\nabla^\alpha \hat \Pi^{(m)}_z(k)|
    & \le
    {\rm const} \sum_{x\in\Z^d} |x|^{|\alpha|+1} |\Pi_z^{(m)}(x)| \times
    \begin{cases}
        (|k|+m) & (|\alpha| \; \text{odd})
        \\
        1 & (|\alpha| \; \text{even})  .
    \end{cases}
\end{align}
We have already observed that it is now sufficient to consider $|\alpha| \le d-3$,
and by
Proposition~\ref{prop:Pim},
$\sum_{x\in\Z^d}|x|^s|\Pi^{(m)}_z(x)| \le O(\beta)$ for $s \le d-2$.
This yields the required
analogue of \refeq{Dders} for derivatives of $\hat\Pi_z^{(m)}$, and completes the proof.
\end{proof}

\begin{rk}
\label{rk:perc}
In the proof of Lemma~\ref{lem:Girbd}, we require $d-2$ derivatives of
$\hat\Pi_z^{(m)}$, which are controlled by the $(d-2)^{\rm nd}$ moment of $\Pi_z^{(m)}$.
The latter ultimately goes back to
\refeq{xsPi} with $s=d-2$, which itself requires
 $d+(d-2)<3(d-2)$, i.e., $d>4$.
For the Ising and $\varphi^4$ models, $\Pi(x)$  also
obeys an upper bound $|x|^{-3(d-2)}$ \cite{BHH19,Saka07,Saka15}, which raises the
possibility that our results could be extended to these spin models.  However,
for percolation the bound on $\Pi(x)$ is $|x|^{-2(d-2)}$ and for lattice trees and
lattice animals it is $|x|^{-(2d-6)}$
\cite{HHS03}, so for neither does $\Pi$ have finite $(d-2)^{\rm nd}$ moment in any dimension.
Thus our approach cannot apply to percolation, nor to lattice trees and lattice
animals, without a new idea.  A completely different proof of Theorem~\ref{thm:mr} for
high-dimensional percolation is given in \cite{HMS22}; that proof does not apply to
weakly self-avoiding walk.
\end{rk}

The following lemma illustrates the role of a key cancellation due to \refeq{Ezero}.

\begin{lemma}
\label{lem:Ebd}
Let $d>4$ and let $\beta$ be sufficiently small.
Fix $z \in [\frac{1}{\degree},z_c)$ and $m \in [0, \frac 12 m(z)]$.
There is a $c_0>0$ (independent of $z,m,k$)
such that
$|\nabla^\alpha \hat E_z^{(m)}(k)| \le c_0 \beta$
for $|\alpha|  \le d-2$,  and moreover,
\begin{equation}
\lbeq{nabEbd}
    |\nabla^\alpha \hat E_z^{(m)}(k)| \le
    c_0\beta(|k|+m)^{3-|\alpha|}
    \qquad (|\alpha| \le 3).
\end{equation}
\end{lemma}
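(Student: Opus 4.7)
The plan is to decompose $E_z^{(m)}$ via \refeq{Emformula} and combine the uniform moment bound of Proposition~\ref{prop:Pim} with the defining identities \refeq{Ezero} and the lattice symmetry $E_z(x)=E_z(-x)$. Together these force every Taylor coefficient of $\hat E_z^{(m)}(k)$ at $(m,k)=(0,0)$ through order three to vanish, which is the source of the decay factor $(|k|+m)^{3-|\alpha|}$ in \refeq{nabEbd}.

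First I would establish the uniform bound $|\nabla^\alpha \hat E_z^{(m)}(k)| \le c_0\beta$ for $|\alpha|<2d-6$. Since $|\nabla^\alpha \hat E_z^{(m)}(k)|\le \sum_x |x|^{|\alpha|} |E_z^{(m)}(x)|$, it suffices to estimate each of the three summands in \refeq{Emformula}: the $(1-\lambda_z)(\delta-D^{(m)})$ term is supported on $\{0\}\cup\{|x|=1\}$ and its coefficient is $O(\beta)$ by \refeq{lambdaz} together with \refeq{xsPi}; the $\lambda_z\hat\Pi_z(0)D^{(m)}$ term is handled analogously using $\hat\Pi_z(0)=O(\beta)$ from \refeq{xsPi} with $s=0$; and the tilted term $\lambda_z \Pi_z^{(m)}$ is controlled uniformly in $z$ by Proposition~\ref{prop:Pim}, which is exactly where the constraint $|\alpha|<2d-6$ enters.

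For the refined bound \refeq{nabEbd}, I would compute the Taylor coefficient of the monomial $m^a k^\gamma$ in $\hat E_z^{(m)}(k)$, which is a multiple of the moment $\sum_x x_1^a x^\gamma E_z(x)$. The symmetry $E_z(x)=E_z(-x)$ forces every moment of odd total degree to vanish, while for even total degrees $0$ and $2$ the identities \refeq{Ezero} together with the coordinate-permutation/reflection symmetry of $E_z$ (which gives $\sum_x x_i x_j E_z(x)=d^{-1}\delta_{ij}\sum_x|x|^2 E_z(x)$) kill the remaining moments. Hence the Taylor polynomial of $\nabla^\alpha_k \hat E_z^{(m)}(k)$ in $(m,k)$ at the origin of order $3-|\alpha|$ is identically zero. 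Writing
\[
\nabla^\alpha_k \hat E_z^{(m)}(k) = i^{|\alpha|}\sum_x x^\alpha E_z(x)\bigl[e^{mx_1+ik\cdot x}-P(x,m,k)\bigr],
\]
where $P$ is the order-$(3-|\alpha|)$ Taylor polynomial in $(m,k)$ of the exponential, each pointwise summand is $o((|m|+|k|)^{3-|\alpha|})$, and dominated convergence (with envelope furnished by Proposition~\ref{prop:Pim} for some $s$ slightly below $2d-6$) then yields the claim $|\nabla^\alpha_k \hat E_z^{(m)}(k)|=o(\beta(|m|+|k|)^{3-|\alpha|})$ uniformly in $z$.

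The main obstacle is obtaining the $o$-rate rather than a clean $O(\beta(|m|+|k|)^{4-|\alpha|})$ bound: such an $O$-bound would require a finite fourth tilted moment of $\Pi_z^{(m)}$ uniformly in $z$, which Proposition~\ref{prop:Pim} fails to supply in the borderline dimension $d=5$, where $2d-6=4$. Fortunately the weaker $o$-statement is all that is needed downstream, and it follows from the dominated-convergence argument above using only moments strictly below order $2d-6$.
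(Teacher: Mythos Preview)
Your proposal is correct and follows essentially the same route as the paper: decompose via \refeq{Emformula}, use Proposition~\ref{prop:Pim} for the crude $O(\beta)$ bound, and then obtain \refeq{nabEbd} from the vanishing of all moments of $E_z$ through order three (by \refeq{Ezero} and the $\Z^d$ symmetries) together with a Taylor remainder controlled by a moment of order $3+s'<2d-6$. Your explicit discussion of why only an $o$-bound (rather than $O((|k|+m)^{4-|\alpha|})$) is available at $d=5$ matches the paper's footnote on this point.
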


\begin{proof}
We first prove that $|\nabla^\alpha \hat E_z^{(m)}(k)| \le c_0 \beta$
for $|\alpha| \le d-2$.  To begin, we differentiate the formula for $\hat E_z^{(m)}(k)$
which arises from Fourier transformation of \refeq{Emformula}, and obtain
\begin{align}
\lbeq{nablaEmformula}
    \nabla^\alpha \hat E_z^{(m)}
    &= -(1-\lambda_z)\nabla^\alpha (1-\hat D^{(m)})
    - \lambda_z \hat\Pi_z(0)\nabla^\alpha \hat D^{(m)}
    + \lambda_z \nabla^\alpha \hat \Pi_z^{(m)}.
\end{align}
We have seen previously that $\lambda_z=1+O(\beta)$, and it follows from
Proposition~\ref{prop:Pim} that $\hat\Pi_z(0) = O(\beta)$.
Since $z \in [\frac{1}{\degree},z_c)$ and since $z_c-\frac 1\degree \le O(\beta)$,
the mass $m$ is at most $O(\beta^{1/2})$ so certainly $\nabla^\alpha\hat D^{(m)} (k)=O(1)$.
It also follows from Proposition~\ref{prop:Pim} that $\nabla^\alpha \hat \Pi_z^{(m)}(k)=O(\beta)$ for $|\alpha| \le d-2$.  This proves that, as claimed,
$|\nabla^\alpha \hat E_z^{(m)}(k)| \le c_0 \beta$
for $|\alpha| \le d-2$.

It remains to prove \refeq{nabEbd}.
The case $|\alpha|=3$ has already been proved in the previous paragraph
(of course $3 \le d-2$ for $d \ge 5$),
so we consider now $0 \le |\alpha| \le 2$, beginning with $|\alpha|=0$.
We fix $z$, define $h:[0,\frac 12 m(z)] \times \T^d \to \C$ by
\begin{equation}
    h(m,k) = \hat E_z^{(m)}(k) = \sum_{x\in\Z^d} E_z(x)e^{mx_1}e^{ik\cdot x},
\end{equation}
and expand $h$ to second order in $(m,k)$ with third order error term.
The constant term $h(0,0)$ is zero by \refeq{Ezero}.
First derivatives of $h$ also vanish at $(0,0)$ due to the reflection symmetry
of $E_z(x)$ in each component.  In addition, it follows from \refeq{Ezero} and symmetry that
$\sum_{x\in \Z^d} E_z(x)x_j^2 = 0$ for each $j=1,\ldots, d$, and hence all second derivatives
also vanish at $(0,0)$ (second derivatives other than
$\partial_m^2$, $\partial_m\partial_{k_1}$
and $\partial_{k_j}^2$ all vanish by reflection symmetry).
Therefore, the second order Taylor polynomial of $h$ is identically zero and $h$ is
equal to its third order Taylor remainder, which we consider in its Lagrange form.
All third derivatives of $\hat E_z^{(m)}(k)$, with respect to $(m,k)$, are $O(\beta)$ due
to Proposition~\ref{prop:Pim} (it is only derivatives of $\hat\Pi^{(m)}_z(k)$ that require
careful attention), and thus we see that by increasing the value of $c_0$ if necessary
we obtain
\begin{equation}
    h(m,k) \le c_0\beta(|k|+m)^3 ,
\end{equation}
which is the $|\alpha|=0$ case of \refeq{nabEbd}.

The proof of \refeq{nabEbd} for $|\alpha|=1,2$ follows in exactly the same way, with
the following minor change.  Each $k$-derivative of $h$ brings a factor $x_j$ down
from the exponent.  This reduces the number of vanishing derivatives of $\nabla^\alpha h$ by
$|\alpha|$, with a corresponding reduction in the order of the error in Taylor expansion.
This completes the proof.
\end{proof}

The next lemma provides the relation between $m_0(\mu_z)$ and $m(z)$ claimed in \refeq{mmu}.

\begin{lemma}
\label{lem:mm0}
Let $d>4$ and let $\beta$ be sufficiently small.
If $z \in [\frac{1}{\degree},z_c)$
then $\mu_{z} \in [\frac{1}{2\degree},\frac{1}{\degree})$
and $\frac 12 m(z) \le \frac 23 m_0(\mu_{z})$.
\end{lemma}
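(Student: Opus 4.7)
The plan is to derive an explicit identity relating $m_0(\mu_z)$ to $m(z)$ via the lace expansion, from which the desired inequality will fall out by elementary estimates. First I would dispose of the bound on $\mu_z$. From \refeq{muzm}, $\mu_z\degree = (z\degree + \sigma_z)/(\hat F_z(0) + z\degree + \sigma_z)$ with $\sigma_z = \sum_x |x|^2 \Pi_z(x)$. The upper bound $\mu_z < \frac{1}{\degree}$ is immediate because $\hat F_z(0) = 1/\chi(z) > 0$ for $z < z_c$. For the lower bound I would use $z\degree \ge 1$, $z_c\degree \le 1 + O(\beta)$ from \refeq{zcasy}, $|\sigma_z| = O(\beta)$ from \refeq{xsPi}, and $\hat F_z(0) \le 1$: these give numerator $\ge 1 - O(\beta)$ and denominator $\le 2 + O(\beta)$, so $\mu_z\degree \ge \tfrac{1}{2}$ for $\beta$ sufficiently small.

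For the mass comparison I would combine three ingredients: (i) the identity $1 - \mu_z\degree = \lambda_z/\chi(z)$ from \refeq{muzm}, (ii) the defining relation $2\mu_z[\cosh m_0(\mu_z) - 1] = 1 - \mu_z\degree$ from \refeq{m0def}, and (iii) the tilted-susceptibility identity \refeq{chimz1}. These yield
\begin{equation}
    2\mu_z[\cosh m_0(\mu_z) - 1]
    = \lambda_z\bigl\{2z[\cosh m(z) - 1] + [\hat\Pi_z^{(m(z))}(0) - \hat\Pi_z(0)]\bigr\}.
\end{equation}
The $\Pi$-correction I would bound using the elementary inequality $\cosh t - 1 \le \tfrac{t^2}{2}\cosh t$ (itself a consequence of $\sinh s \le s\cosh s$), $\Z^d$-symmetry of $\Pi_z$, and Proposition~\ref{prop:Pim} with $s=2$:
\begin{equation}
    |\hat\Pi_z^{(m(z))}(0) - \hat\Pi_z(0)|
    \le \frac{m(z)^2}{2}\sum_{x\in\Z^d} x_1^2 \sum_{N=1}^\infty \Pi_z^{(N,m(z))}(x)
    \le \frac{K_2'\beta}{2}\, m(z)^2.
\end{equation}

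To conclude, I would insert this and $\cosh m(z) - 1 \ge \tfrac{m(z)^2}{2}$ into the identity, and use $z/\mu_z \ge 1$ (since $\mu_z \le \tfrac{1}{\degree} \le z$), $\lambda_z = 1 + O(\beta)$, and $1/\mu_z \le 2\degree$ to obtain $\cosh m_0(\mu_z) - 1 \ge \tfrac{1}{2}(1 - O(\beta))\,m(z)^2$. Since $z_c - \tfrac{1}{\degree} = O(\beta)$ by \refeq{zcasy}, we have $1 - z/z_c = O(\beta)$ uniformly on $[\tfrac{1}{\degree}, z_c)$, and then \refeq{masy2} together with monotonicity of $m$ yields $m(z) = O(\beta^{1/2})$ uniformly, so $\cosh(\tfrac{3}{4} m(z)) - 1 = \tfrac{9}{32}m(z)^2(1 + O(\beta))$. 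Because $\tfrac{1}{2} > \tfrac{9}{32}$ with room to spare, the lower bound on $\cosh m_0(\mu_z) - 1$ dominates $\cosh(\tfrac{3}{4}m(z)) - 1$ for $\beta$ small, and strict monotonicity of $\cosh$ on $[0,\infty)$ gives $m_0(\mu_z) \ge \tfrac{3}{4} m(z)$, i.e., $\tfrac{1}{2} m(z) \le \tfrac{2}{3} m_0(\mu_z)$. The main delicate point will be ensuring the coefficient of $m(z)^2$ in the lower bound is truly $\tfrac{1}{2}(1 - O(\beta))$, which relies on the sharp inequality $z \ge \tfrac{1}{\degree} \ge \mu_z$ rather than merely $z, \mu_z \sim \tfrac{1}{\degree}$, together with Proposition~\ref{prop:Pim} to absorb the $\Pi$-correction into $O(\beta)\,m(z)^2$.
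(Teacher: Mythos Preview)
Your approach is essentially the same as the paper's: both relate $m_0(\mu_z)$ to $m(z)$ through the chain $1-\mu_z\Omega = \lambda_z/\chi(z)$ and the tilted-susceptibility identity \refeq{chimz1}, control the $\Pi$-correction via Proposition~\ref{prop:Pim}, and close using $m(z)=O(\beta^{1/2})$. The paper routes this through \refeq{F0m2} (proved inside Lemma~\ref{lem:Girbd}) and then Taylor-expands $m_0(\mu_z)^2$ via \refeq{m0t}, whereas you work directly with $\cosh-1$; the two are equivalent, and your version is arguably a bit more self-contained.

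There is one small arithmetic slip in your lower bound for $\mu_z$. Using only $\hat F_z(0)\le 1$ gives denominator $\le 2+O(\beta)$, and then the ratio $(1-O(\beta))/(2+O(\beta))$ is slightly \emph{less} than $\tfrac12$, not at least $\tfrac12$. The fix is immediate with ingredients you already cite: since $z\Omega\ge 1$ and $|\hat\Pi_z(0)|=O(\beta)$ by \refeq{xsPi}, in fact
\[
\hat F_z(0)=1-z\Omega-\hat\Pi_z(0)\le O(\beta),
\]
so the denominator is at most $1+O(\beta)$ and hence $\mu_z\Omega\ge (1-O(\beta))/(1+O(\beta))=1-O(\beta)\ge\tfrac12$ for small $\beta$. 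With that correction the argument goes through.
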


\begin{proof}
Let $z \in [\frac{1}{\degree},z_c)$.
As in the proof of Lemma~\ref{lem:Girbd}, $m(z) =O(\beta^{1/2})$.
By \refeq{muzm} and \refeq{F0m2} (with $m=0$),
\begin{align}
\lbeq{mm0a}
    \mu_z \degree & = 1-\lambda_z \hat F_z(0)
     =
    1- zm(z)^2 + O(\beta m(z)^2).
\end{align}
Therefore, $\mu_z=1-O(\beta)$ and hence
$\mu_{z} \in [\frac{1}{2\degree},\frac{1}{\degree})$ for small $\beta$.

Since $\cosh m_0(t)  =  1 + \frac{1-t\degree}{2t}$ by
\refeq{m0def}, we have
\begin{align}
\lbeq{m0t}
    m_0(t)^2 & = \frac{1-t\degree}{t} + O\left(\frac{1-t\degree}{t} \right)^{2}
\end{align}
and hence, by \refeq{mm0a} and the fact that
$\frac{1}{2\degree} \le\mu_z < \frac{1}{\degree} \le z$,
\begin{align}
    m_0(\mu_z)^2 &= \frac{z}{\mu_z} m(z)^2(1+O(\beta))
    \ge    m(z)^2(1+O(\beta)).
\end{align}
For small $\beta$, this gives $\frac 23 m_0(\mu_z) \ge \frac 12 m(z)$, and the proof is complete.
\end{proof}

\begin{proof}[Proof of Proposition~\ref{prop:h}]
Let $|\alpha| \le d-2$, $z \in [\frac{1}{\degree},z_c)$, and $m \le \frac 12 m(z)$.
It follows from the formula for $\hat f^{(m)}_z$ in \refeq{fprod}, together with
the product rule for differentiation, that
$\nabla^\alpha \hat f_z^{(m)}$ involves terms
\begin{equation}
\lbeq{fder}
    (\nabla^{\alpha_1}\hat C_{\mu_{z}}^{(m)})
    (\nabla^{\alpha_2}\hat E_{z}^{(m)}) (\nabla^{\alpha_3}\hat G_{z}^{(m)})
    \quad \text{with} \quad
    |\alpha_1|+|\alpha_2|+|\alpha_3| =|\alpha|.
\end{equation}
The derivatives of $\hat E^{(m)}_z$ are bounded using Lemma~\ref{lem:Ebd}, and
the derivatives of $\hat G^{(m)}_z$ are bounded using Lemma~\ref{lem:Girbd}.
For the derivatives of $\hat C_{\mu_{z}}^{(m)}$, we know from Lemma~\ref{lem:mm0}
that
$\mu_{z} \in [\frac{1}{2\degree},\frac{1}{\degree})$ and
$\frac 23 m_0(\mu_z) \ge \frac 12 m(z) \ge m$,
so we can apply Lemma~\ref{lem:Ck} (with $\sigma = \frac 23$) to obtain
\begin{equation}
    |\nabla^{\alpha_1}\hat C_{\mu_{z}}^{(m)}(k)| \le
    {\rm const}\, \frac{1}{(|k|+m_0(\mu_z))^{2+|\alpha_1|}}
    \le
    {\rm const}\, \frac{1}{(|k|+m)^{2+|\alpha_1|}}.
\end{equation}
Altogether, these facts lead to an upper bound for \refeq{fder} of order
\begin{equation}
    \frac{\beta(|k|+m)^{3-\min\{|\alpha_2|,3\}}}{(|k|+m)^{2+ |\alpha_1|} (|k|+m)^{2+|\alpha_3|}}
    =
    \frac{\beta}{(|k|+m)^{1+ |\alpha_1|+\min\{|\alpha_2|,3\} +|\alpha_3|}}
    \le
    \frac{\beta}{|k|^{d-1}}.
\end{equation}
The integral of the right-hand side over $\T^d$ is $O(\beta)$, so the proof is complete.
\end{proof}

\section{The plateau for the torus two-point functions}
\label{sec:plateaupf}

The proofs of Theorems~\ref{thm:plateau} and \ref{thm:plateau-srw} are largely
the same so we present them
together.  We separate the proofs of the upper and lower bounds, beginning with
the upper bound.
We write $\Gamma_z(x)$ to denote either of $C_z(x)$ or $G_z(x)$ in discussions
that apply to both.

As a preliminary, we observe that
if $x \in \T_r^d$ is regarded as a point in $[-\frac r2, \frac r2)^d \cap \Z^d$
then $\|x+ru\|_\infty \asymp r\|u\|_\infty$ uniformly in nonzero $u\in \Z^d$, since
\begin{equation}
\lbeq{xulb}
    \|x+ru\|_\infty \ge \|ru\|_\infty - \frac r2 \ge  \|ru\|_\infty - \frac 12\|ru\|_\infty
    = \frac 12 \|ru\|_\infty
    ,
\end{equation}
\begin{equation}
\lbeq{xuub}
    \|x+ru\|_\infty \le  \frac r2 + \|ru\|_\infty  \le  \frac 12\|ru\|_\infty + \|ru\|_\infty
    = \frac 32 \|ru\|_\infty.
\end{equation}

\subsection{Upper bound for the torus two-point functions}
\label{sec:torusub}

As discussed below \refeq{CN},
walks on the torus are in a one-to-one correspondence with
walks on $\Z^d$ via unfolding,
and the simple random walk torus two-point function is given by
\begin{equation}
\lbeq{CTZ}
    C^{\T}_z(x) = C_z(x) + \sum_{u\in \Z^d: u \neq 0} C_z(x+ru).
\end{equation}
 For weakly self-avoiding walk the equality does not hold, because the unfolding of a
torus walk can have fewer intersections than the folded walk.  Thus the above is
replaced by an inequality
\begin{equation}
\lbeq{GGG}
    G^{\T}_z(x) \le G_z(x) + \sum_{u\in \Z^d: u \neq 0} G_z(x+ru).
\end{equation}
By Theorem~\ref{thm:mr} and Proposition~\ref{prop:srw}
(recall that $\Gamma_z$ denotes either $C_z$ or $G_z$),
\begin{align}
\lbeq{Gamma2}
    \Gamma_z(x) & \le c \frac{1}{1\vee |x|^{d-2}} e^{-c'\nu(z) |x|} \qquad (z \in (0, z_*)),
\end{align}
with $\nu(z)$ equal to $m_0(z)$ or $m(z)$, and with $z_*$ equal to $\frac{1}{\degree}$
or $z_c$, for $C_z$ and $G_z$ respectively.
The plateau upper bounds then follow immediately, as follows.

\begin{proof}[Proof of upper bounds in Theorems~\ref{thm:plateau} and \ref{thm:plateau-srw}]
Let $z\in (0,z_*)$,
let $d>2$ for simple random walk, and let $d>4$ and $\beta$ be small
for weakly self-avoiding walk.
By \refeq{Gamma2} and \refeq{xulb},
\begin{align}
\lbeq{plateau-ub1}
    \sum_{u\in \Z^d: u \neq 0} \Gamma_z(x+ru)
    & \le c \sum_{u\neq 0}    \frac{1}{|x+ru|^{d-2}} e^{-c' \nu(z) |x+ru|}
    \le c_0 \sum_{u\neq 0}    \frac{1}{|ru|^{d-2}} e^{-c_0' \nu(z) |ru|}.
\end{align}
We bound the sum on the right-hand side by an integral and make the change of
variables $y=\nu ru$ to obtain
\begin{align}
\lbeq{plateau-ub2}
    \sum_{u\in \Z^d: u \neq 0} \Gamma_z(x+ru)
    & \le c_1 \frac{1}{r^d\nu(z)^2} \int_{\R^d} du  \frac{1}{|y|^{d-2}} e^{-c_0' |y|}
    \le
    c_1' \frac{1}{r^d\nu(z)^2} .
\end{align}
It remains to show that $\nu(z)^{-2} \le {\rm const}\,\chi(z)$.
Fix any $z_1 \in (0,z_*)$.  For $z \le z_1$, since $\nu$ is decreasing and
since $1=\chi(0) \le \chi(z)$, we have $\nu(z)^{-2} \le \nu(z_1)^{-2} \le
\nu(z_1)^{-2}\chi(z)$ and the desired upper bound ${\rm const}\,r^{-d}\chi(z)$
follows for $z \in (0,z_1]$.
We can choose $z_1$ close enough to $z_c$ that $\nu(z)^{-2}$ and $\chi(z)$ are
comparable for $z\in (z_1,z_*)$, since both are asymptotic to $(1-z/z_*)^{-1}$,
and this gives the desired estimate for $z\in (z_1,z_*)$ and thus completes the proof.\footnote{The mechanism in this proof applies more generally---it is not necessary
that the decay be exponential as in \refeq{Gamma2}.
For example, for a random walk on $\Z^d$ whose step distribution is given by a fractional power
$-(-\Delta)^{\alpha/2}$
of the discrete Laplacian (a step from $0$ to $x$ has probability
$-(-\Delta)^{\alpha/2}_{0x} \asymp |x|^{-(d+\alpha)}$),
the bound \refeq{Gamma2} is replaced, for $d \ge 1$, $\alpha \in (0,\min\{2, d\})$,
 $m^2 \in [0,1]$, and $x \neq 0$, by
(see \cite[Section~2.1]{Slad18})
\[
        ((-\Delta)^{\alpha/2}+m^2)^{-1}_{0x}
        \le
        c \frac{1}{|x|^{d-\alpha}} \frac{1}{1+m^4|x|^{2\alpha}}.
\]
The steps in \refeq{plateau-ub1}--\refeq{plateau-ub2} now
give an upper bound $r^{-d}m^{-2}$, which equals $r^{-d}\chi$ since
$\sum_{x\in\Z^d} ((-\Delta)^{\alpha/2}+m^2)^{-1}_{0x}=m^{-2}$.  }
\end{proof}

\subsection{Lower bound for the torus two-point functions}

We first consider dimensions $d>4$, which is the case relevant for the weakly self-avoiding walk.
Let
\begin{equation}
    \rho=z_*-z.
\end{equation}

\begin{lemma}
\label{lem:Gamma}
Let $d>4$, and for weakly self-avoiding walk let $\beta$ be sufficiently small.
There are $a_i>0$ such that, for all $x \in \Z^d$ and all $z \in [\frac 12 z_*,z_*]$,
\begin{align}
\lbeq{Gamma0}
    a_4 \frac{1}{1\vee |x|^{d-2}} \le \Gamma_{z_*}(x) & \le a_5 \frac{1}{1\vee |x|^{d-2}},
    \\
\lbeq{Gamma4}
    \Gamma_{z_*}(x)-\Gamma_z(x) & \le a_3 \rho \frac{1}{1\vee |x|^{d-4}}.
\end{align}
\end{lemma}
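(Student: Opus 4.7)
The plan is to prove the two displays in Lemma~\ref{lem:Gamma} separately, treating $C_z$ and $G_z$ simultaneously since the structure is essentially identical.

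For \refeq{Gamma0}, the upper bound is already available: for $C_z$ I would apply Proposition~\ref{prop:srw} at $z = z_* = 1/\Omega$, where $m_0(z_*) = 0$, and for $G_z$ I would cite \refeq{Gzcub}. The lower bound is the matching asymptotic: for simple random walk the classical lattice Green function satisfies $C_{1/\Omega}(x) \sim c_d|x|^{-(d-2)}$ for $d>2$ (via local central limit theorem), and for weakly self-avoiding walk \refeq{Gzc} gives $G_{z_c}(x) \sim \mathrm{const}\,|x|^{-(d-2)}$. The point $x=0$ is handled by the $1\vee$: in all cases $\Gamma_{z_*}(0) \ge 1$ from the zero-step contribution, and is finite by the upper bound.

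The bulk of the work is \refeq{Gamma4}. I would write
\begin{equation}
    \Gamma_{z_*}(x)-\Gamma_z(x) = \int_z^{z_*}\!\! \partial_\zeta \Gamma_\zeta(x)\,d\zeta
\end{equation}
and derive a pointwise bound on $\partial_\zeta\Gamma_\zeta(x)$ of order $(1\vee|x|)^{-(d-4)}$ uniformly in $\zeta\in[0,z_*]$, so that integration produces the factor $\rho=z_*-z$. Differentiating $C_\zeta * A_\zeta = \delta$ yields $\partial_\zeta C_\zeta = \Omega\, C_\zeta * D * C_\zeta$. Differentiating $G_\zeta * F_\zeta = \delta$ from \refeq{GstarF} gives
\begin{equation}
    \partial_\zeta G_\zeta = G_\zeta * K_\zeta * G_\zeta, \qquad K_\zeta = \Omega D + \partial_\zeta \Pi_\zeta,
\end{equation}
where by \refeq{dzPi} we have $\|K_\zeta\|_1 = \Omega + O(\beta)$, and by \refeq{PiNbd} the kernel $K_\zeta(y)$ decays at least as fast as $(1\vee|y|)^{-3(d-2)}$.

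The key convolution estimate is the standard fact that for $d>4$,
\begin{equation}
    \sum_{y\in\Z^d}\frac{1}{(1\vee|y|)^{d-2}}\frac{1}{(1\vee|x-y|)^{d-2}} \le \frac{c}{(1\vee|x|)^{d-4}}.
\end{equation}
Using $\Gamma_\zeta(x)\le \Gamma_{z_*}(x) \le a_5(1\vee|x|)^{-(d-2)}$ by monotonicity in $\zeta$ and the already-proven upper half of \refeq{Gamma0}, two applications of this convolution estimate (combined with splitting $|y|\le|x|/2$ versus $|y|>|x|/2$ to absorb the $K_\zeta$ factor using its summability and rapid decay) yield $\partial_\zeta\Gamma_\zeta(x)\le c(1\vee|x|)^{-(d-4)}$. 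Integrating in $\zeta$ produces \refeq{Gamma4}.

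The main obstacle is verifying the convolution estimate in the presence of the middle kernel $K_\zeta$. The cleanest route is to observe that convolution with an $\ell^1$ function that also decays faster than $(1\vee|y|)^{-d}$ preserves polynomial upper bounds of the form $(1\vee|x|)^{-q}$ for $q<d$: one splits the sum at $|y|=|x|/2$, bounds the small-$y$ part by $\|K_\zeta\|_1$ times $\sup_{|w|\ge|x|/2}(1\vee|w|)^{-q}$, and bounds the large-$y$ part by the tail of $K_\zeta$ times the trivial bound $\Gamma_\zeta(0)^2$ (which is finite since the critical bubble diagram is finite). Since the rate $3(d-2)$ in \refeq{PiNbd} is much larger than $d$ for $d>4$, this mechanism applies comfortably and produces the required $(1\vee|x|)^{-(d-4)}$ bound with a constant independent of $\zeta\in[0,z_*]$ and $\beta$ small.
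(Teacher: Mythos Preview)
Your treatment of \refeq{Gamma0} is the same as the paper's: cite the known critical asymptotics for $C_{1/\Omega}$ and $G_{z_c}$.

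For \refeq{Gamma4} your route is genuinely different from the paper's. The paper does \emph{not} differentiate the lace expansion identity; instead it proves the combinatorial inequality
\[
    z\,\partial_z \Gamma_z(x) \le (\Gamma_z * \Gamma_z)(x)
\]
directly by writing $n = \sum_{m=0}^{n-1}1$ in the series, splitting each $n$-step walk into two subwalks at step $m$, and (for $G_z$) using that the interaction factor over the full walk is bounded by the product of the two subwalk interactions. Then monotonicity and the convolution estimate $(\Gamma_{z_*}*\Gamma_{z_*})(x)\le c(1\vee|x|)^{-(d-4)}$ finish the job. No $\Pi$ enters at all.

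Your approach instead uses the analytic identity $\partial_\zeta G_\zeta = G_\zeta * K_\zeta * G_\zeta$ with $K_\zeta=\Omega D + \partial_\zeta\Pi_\zeta$. This is correct in Fourier space, but to carry out your splitting argument for the triple convolution you need \emph{pointwise} decay of $|\partial_\zeta\Pi_\zeta(y)|$ faster than $|y|^{-d}$ (indeed you need its tail sum over $|y|>|x|/2$ to be $O(|x|^{-(d-4)})$). Neither of the inputs you cite provides this: \refeq{PiNbd} bounds $\Pi_z^{(N)}(x)$, not its $z$-derivative, and \refeq{dzPi} gives only an $\ell^1$ bound on $\partial_z\Pi_z$. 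The needed pointwise decay is very likely true and provable by the same diagrammatic method (differentiating a diagram inserts a marked vertex and costs one extra two-point function), but it is not in the paper, so this is a gap in your argument as written. A smaller slip: your ``trivial bound $\Gamma_\zeta(0)^2$'' for $(G_\zeta*G_\zeta)(x-y)$ should be the bubble $\mathsf{B}(\zeta)=\|G_\zeta\|_2^2$, obtained via Cauchy--Schwarz; $G_\zeta(0)^2$ is a different quantity and does not bound the convolution.

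The payoff of the paper's route is that it is entirely elementary and model-robust (walk-splitting works identically for $C_z$ and $G_z$ with no reference to $\Pi$), whereas your route imports the lace expansion machinery and an extra diagrammatic estimate just to prove what turns out to be a soft inequality.
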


\begin{proof}
The fact that \refeq{Gamma0} holds for simple random walk is just the standard decay
for the critical two-point function \cite{LL10}.
For weakly self-avoiding walk \refeq{Gamma0} follows from
\refeq{Gzc}.

For \refeq{Gamma4}, we first claim that
\begin{equation}
\lbeq{convest0}
    z\frac{d}{dz}\Gamma_{z}(x) \le  (\Gamma_{z}*\Gamma_{z})(x) .
\end{equation}
To prove this for simple random walk, we use the definition of $C_z(x)$ in \refeq{Czdef}
to see that
\begin{equation}
        z\frac{d}{dz}C_{z}(x) = \sum_{n=1}^\infty \sum_{\omega\in \Wcal_n(x)} nz^n
        \le
         \sum_{n=0}^\infty \sum_{\omega\in \Wcal_n(x)} \sum_{m=0}^n z^{n-m}z^m,
\end{equation}
and then divide the walk $\omega$ into subwalks of lengths $m$ and $n-m$ to factor
the right-hand side as a convolution.  The same technique applies to weakly self-avoiding
walk, with the additional step of using the inequality
\begin{equation}
    \prod_{0 \le s<t \le n}(1+\beta U_{st})
    \le
    \prod_{0 \le s_1<t_1 \le m}(1+\beta U_{s_1t_1})
    \prod_{m \le s_2<t_2 \le n}(1+\beta U_{s_2t_2})
\end{equation}
to bound the interaction for $\omega$ by
a product of interactions for the two subwalks.
This proves \refeq{convest0}, and then monotonicity in $z$,
the upper bound of \refeq{Gamma0},
and the elementary convolution estimate\footnote{The estimate implies that the
convolution with itself of a function bounded by $|x|^{-(d-2)}$ is bounded by
$|x|^{-(d-4)}$ if $d>4$.}
\cite[Proposition~1.7(i)]{HHS03} give
\begin{equation}
\lbeq{convest}
    z\frac{d}{dz}\Gamma_{z}(x)
    \le  (\Gamma_{z_*}*\Gamma_{z_*})(x)
    \le   c_0\frac{1}{1\vee |x|^{d-4}}.
\end{equation}
Integration of \refeq{convest} (using the assumed lower bound on the factor $z$)
gives \refeq{Gamma4}, and the proof is complete.
\end{proof}

\begin{rk}
\label{rk:Gamma}
We can now prove the claim in
\refeq{Gzzc}. The upper bound follows by bounding $\Gamma_z(x)$ by
$\Gamma_{z_*}(x)$ and using the upper bound of \refeq{Gamma0}.
For the lower bound of \refeq{Gzzc}, we take $\rho \le \epsilon r^{-2}$ with
$\epsilon=\frac 2d a_3^{-1}a_4$
and apply Lemma~\ref{lem:Gamma} to see that
\begin{align}
    \Gamma_z(x) & = \Gamma_{z_*}(x) - (\Gamma_{z_*}(x)-\Gamma_z(x))
    \nnb & \ge
    a_4\frac{1}{1\vee |x|^{d-2}} - a_3\epsilon \frac{1}{r^2} \frac{1}{1\vee |x|^{d-4}}
    \ge \frac 12 a_4 \frac{1}{1\vee |x|^{d-2}},
\end{align}
where we used $|x|^2 \le dr^{2}/4$ in the last step.
The constant $c_3$ in \refeq{plateau}--\refeq{Gzzc} can be taken to be at most $\epsilon$.
\end{rk}

For the lower bound in dimensions $d>4$, we first consider the easier case of
simple random walk.  Its proof is used also for weakly self-avoiding walk.

\begin{proof}[Proof of lower bound in Theorem~\ref{thm:plateau-srw} for $d>4$]
Let $d>4$ and $x\in \T_r^d$,
and suppose that $\rho\le c_3' r^{-2}$, with the constant $c_3'$ to be
chosen in the proof.
Since $\chi_0(z) = (1-z\degree)^{-1} = (\degree \rho)^{-1}$,
by \refeq{CTZ} it suffices to prove
for some $c>0$ that
\begin{equation}
\lbeq{Csum}
    \sum_{u\in \Z^d: u \neq 0} C_z(x+ru) \ge
    \frac{c}{r^d \rho}.
\end{equation}
For a lower bound,
we only sum over $|u|\le L$ with (large) $L$ depending on $r,\rho$
to be chosen later in the proof.
By \refeq{Gamma0}--\refeq{Gamma4}, for $y \neq 0$,
\begin{align}
    C_{z }(y) & = C_{z_*}(y) - \big(C_{z_*}(y) - C_{z }(y)\big)
    \ge
    a_4\frac{1}{|y|^{d-2}} - a_3\rho \frac{1}{|y|^{d-4}}  .
\end{align}
Therefore, by \refeq{xulb}--\refeq{xuub},
\begin{align}
    \sum_{u\in \Z^d: u \neq 0} C_z(x+ru) &\ge
    \sum_{0<|u| \le L} C_{z}(x+ru)
    \nnb & \ge
    c_0\sum_{0<|u|\le L} \frac{1}{|ru|^{d-2}} - c_0' \rho\sum_{0<|u|\le L}\frac{1}{|ru|^{d-4}}
    \nnb &\ge
    c_1\frac{1}{r^{d-2}}L^2 - c_1' \rho  \frac{1}{r^{d-4}} L^4
    \nnb & =
    c_1\frac{1}{r^{d-2}}L^2 \left(1 - c_2 \rho  r^2 L^2 \right).
\end{align}
We choose $L^2=(2c_2 \rho  r^{2})^{-1}$, and then $L^2  \ge  (2c_2c_3')^{-1}$
is large if we take $c_3'$ to be small.  This choice gives
\begin{align}
    \sum_{u\in \Z^d: u \neq 0} C_z(x+ru)
    & \ge
    \frac 12 c_1 \frac{L^2}{r^{d-2}} =
    \frac{c_1}{4c_2}
    \frac{1}{r^d \rho}  ,
\end{align}
which proves \refeq{Csum} and completes the proof.
\end{proof}

\begin{proof}[Proof of lower bound of Theorem~\ref{thm:plateau}]
We consider dimensions $d>4$ and small $\beta$, and $z$ such
that $\rho=z_c-z$ obeys $c_4\beta^{1/2}r^{-d/2} \le \rho \le c_3 r^{-2}$ with
$c_3$ equal to $c_3'$ from the previous proof and with $c_4$ to be
chosen at the end of the proof.
We seek a lower bound of the form $r^{-d}\chi$ for the difference
\begin{equation}
    \psi_{r,z}^{\T}(x) =
    G^{\T}_{z}(x) - G_{z}(x)
    \qquad (x \in \T^d)
    .
\end{equation}

A torus walk to $x$ unfolds to a walk on $\Z^d$ ending at $x$ or at a point $x+ru$
with $u$ a nonzero point in $\Z^d$.  The weight associated to the unfolded
walk, as a weakly self-avoiding walk on $\Z^d$, can be larger than the weight of
the original torus walk which is penalised by visits of its unfolding to distinct points
in $\Z^d$ with the same projection to the torus.  Without this penalty, the unfolded
walks would have weight
\begin{equation}
    \psi_{r,z}(x) = \sum_{u \in \Z^d: u \neq 0}G_z(x+ru)
    \qquad (x \in \T_r^d).
\end{equation}
Exactly as in the proof of the lower bound of Theorem~\ref{thm:plateau-srw},
$\psi_{r,z}(x)$  is bounded below by a multiple of $r^{-d}\chi$
if $\rho \ge c_3  r^{-2}$, since that proof only
used \refeq{Gamma0}--\refeq{Gamma4}.
For later reference, note that in the proof of the upper bound of Theorem~\ref{thm:plateau}  we in fact obtained an upper bound on $\psi_{r,z}(x)$ without using this particular
notation, and hence
(provided $\rho \le c_3 r^{-2}$ for the lower bound),
\begin{equation}
\lbeq{psiub}
    c r^{-d}\chi(z) \le \psi_{r,z}(x) \le c' r^{-d}\chi(z)
    \qquad (x \in \T_r^d).
\end{equation}
We make the decomposition
\begin{equation}
    \psi_{r,z}^{\T}(x) =  \psi_{r,z}(x) - (\psi_{r,z}(x) -\psi^{\T}_{r,z}(x) ).
\end{equation}
By the lower bound of \refeq{psiub},
it suffices to show that the subtracted term, which by
\refeq{GGG} is nonnegative, obeys
\begin{equation}
\lbeq{psigoal}
    \psi_{r,z}(x) -\psi^{\T}_{r,z}(x)
    \le
    \frac 12 c r^{-d}\chi(z)
    \qquad (x \in \T_r^d),
\end{equation}
with $c$ the constant of \refeq{psiub}.

Let $\pi_r : \Z^d \to \T_r^d$ be the canonical projection map onto the torus
$\T_r^d = (\Z/r\Z)^d$.
In the following, all walks are on $\Z^d$.
Given an $n$-step walk $\omega$ and $0 \le s < t \le n$, we define
\begin{align}
    U^{\T}_{st}(\omega) & =
    \begin{cases}
        -1 & (\pi_r \omega(s) = \pi_r \omega(t))
        \\
        0 & (\text{otherwise}),
    \end{cases}
    \\
    U_{st}(\omega) & =
    \begin{cases}
        -1 & (\omega(s) = \omega(t))
        \\
        0 & (\text{otherwise}),
    \end{cases}
    \\
    U^+_{st}(\omega) & =
    \begin{cases}
        -1 & (\pi_r\omega(s) = \pi_r\omega(t) \; \text{and}\; \omega(s)\neq \omega(t))
        \\
        0 & (\text{otherwise}),
    \end{cases}
\end{align}
as well as
\begin{align}
    K^{\T}(\omega) & = \prod_{0 \le s<t \le n} (1+\beta  U^{\T}_{st}(\omega)),
    \\
    K(\omega) & = \prod_{0 \le s<t \le n} (1+\beta  U_{st}(\omega)),
    \\
    K^{+}(\omega) & = \prod_{0 \le s<t \le n} (1+\beta  U^{+}_{st}(\omega)).
\end{align}
Note that $K^{\T}(\omega)=K(\omega)K^{+}(\omega)$.
By definition,
\begin{align}
    \psi_{r,z}(x) & =
    \sum_{u \neq 0} \sum_{n=0}^\infty \sum_{\omega \in \Wcal_n(x+ru)} z^{n} K(\omega)
    \qquad (x \in \T_r^d),
    \\
    \psi^{\T}_{r,z}(x) & =
    \sum_{u \neq 0} \sum_{n=0}^\infty \sum_{\omega \in \Wcal_n(x+ru)} z^{n} K^{\T}(\omega)
    \qquad (x \in \T_r^d),
\end{align}
and hence
\begin{equation}
\lbeq{psidif}
    \psi_{r,z}(x) -\psi^{\T}_{r,z}(x)  =
    \sum_{u \neq 0} \sum_{n=0}^\infty \sum_{\omega \in \Wcal_n(x+ru)} z^{n}
    K (\omega) [1-K^+(\omega)].
\end{equation}

The inequality
\begin{equation}
    1- \prod_{a\in A}(1-u_a)
    \le \sum_{a \in A}u_a
    \qquad (u_a \in [0,1])
\end{equation}
follows by induction on the
cardinality of the set $A$.  We apply it to $1-K^+$ with $u_a=\beta|U_{st}^+(\omega)|$.
The result is
\begin{align}
    \psi_{r,z}(x) -\psi^{\T}_{r,z}(x)  &\le
    \beta
    \sum_{u \neq 0} \sum_{n=0}^\infty \sum_{0 \le s<t \le n} z^{n}
    \sum_{\omega \in \Wcal_n(x+ru)}
    K(\omega)  |U^+_{st}(\omega)|
    .
\lbeq{inex}
\end{align}
The factor $U^+_{st}(\omega)$ is zero unless $\omega$ visits distinct points at times $s,t$
that
project to the same torus point.  We call two such points $y$ and $y+rv$ with $v\neq 0$, which entails
that $\omega$ travel from $0$ to $y$, from $y$ to $y+rv$, and from $y+rv$ to $x+ru$.
We can therefore decompose $\omega$ as the concatenation of
three subwalks, respectively $\omega_1 \in \Wcal_{n_1}(y)$,
$\omega_2\in \Wcal_{n_2}(rv)$, and $\omega_3 \in \Wcal_{n_3}(x-y + r(u-v))$,
of lengths $n_1=s$, $n_2=t-s$, and $n_3=n-t$.
Also $z^n=z^{n_1}z^{n_2}z^{n_3}$.
The sums over $n,s,t$ in \refeq{inex} are
equivalent to summing over $n_1 \ge 0$, $n_2 \ge 1$, $n_3 \ge 0$, and $K(\omega)$
is bounded above by the product $K(\omega_1)K(\omega_2)K(\omega_3)$.  After using this
last bound (and relaxing the condition that $u$ be nonzero),
\refeq{inex} is bounded above by three independent sums over walks, which
leads to an upper bound
\begin{align}
\lbeq{3walks}
     &\beta \sum_{y\in\Z^d}G_z(y)
     \sum_{v\in \Z^d, v\neq 0}  G_z(rv)
     \sum_{u \in \Z^d}G_z(x-y +r(u-v))
     \nnb & \quad =
     \beta \psi_{r,z}(0)
     \sum_{y,w\in\Z^d}G_z(y) G_z(x-y +rw)
     .
\end{align}

To estimate the right-hand side of \refeq{3walks} for $x \in \T_r^d$, we proceed as follows.
The constant $C$ in this paragraph can change value from one occurrence to the next.
For fixed $w$, by \refeq{Gmr} and by $|y-(x+rw)| \ge |x+rw|-|y|$ we have
\begin{align}
    \sum_{y\in\Z^d} G_z(y) G_z(x-y +rw)
    & \le C \sum_{y\in\Z^d}
    \frac{1}{1\vee |y|^{d-2}} e^{-c_1 m(z) |y|}
    \frac{1}{1\vee |y-(x+rw)|^{d-2}} e^{-c_1 m(z) |y-(x+rw)|}
    \nnb & \quad
    \le C e^{-c_1 m(z) |x+rw|} \sum_{y\in\Z^d}
    \frac{1}{1\vee |y|^{d-2}}
    \frac{1}{1\vee |y-(x+rw)|^{d-2}}
    \nnb & \quad
    \le C e^{-c_1 m(z) |x+rw|}
    \frac{1}{1\vee |x+rw|^{d-4}},
\end{align}
with the last inequality a consequence of the convolution estimate \cite[Proposition~1.7(i)]{HHS03}.
The $w=0$ term is bounded by a constant.  For the rest of the sum over $w$,
as in the proof of the upper bound in Section~\ref{sec:torusub}, we use
\begin{align}
    \sum_{w \neq 0} \frac{1}{1\vee |x+rw|^{d-4}} e^{-c_1 m(z) |x+rw|}
    & \le C
    \sum_{w \neq 0} \frac{1}{|rw|^{d-4}} e^{-c_1' m(z) |rw|}
    \nnb & \le C
    \int_{\R^d} \frac{1}{|ra|^{d-4}} e^{-c_1' m(z) |ra|}da
    \nnb & = C
    \frac{1}{m^4 r^d}
    \le C
    \frac{\chi(z)^2}{r^d}.
\end{align}
With the $w=0$ term and the upper bound of \refeq{psiub}, this leads to
\begin{align}
     \psi_{r,z}(x) -\psi^{\T}_{r,z}(x) &  \le
     {\rm const}\, \frac{\chi(z)}{r^d}
     \left(\beta
     +
     \beta\frac{\chi(z)^2}{r^d} \right)
     .
\end{align}
This proves \refeq{psigoal} provided that $\beta$ and
$\beta r^{-d}\chi(z)^2$ are sufficiently small. Since $\chi(z) \asymp \rho^{-1}$,
it is sufficient if
$\beta r^{-d} \rho^{-2}$ is sufficiently small,
i.e., if $\rho^2 \ge c_4^2 \beta r^{-d}$ for some large $c_4$.
We have assumed this last inequality as a hypothesis for this reason.
This concludes the proof of \refeq{psigoal} and therefore completes
the proof.
\end{proof}

Finally, we prove the lower bound in the remaining dimensions $d=3,4$ for simple random walk.

\begin{proof}[Proof of lower bound of Theorem~\ref{thm:plateau-srw} for $d=3,4$]
As in the proof for $d>4$, we seek a lower bound on
$\sum_{u \neq 0} C_z(x+ru)$.  We again write $z_*=\frac{1}{\degree}$ and $\rho=z_*-z$.
The lower bound is claimed to hold for $\rho \le O(r^{-2})$, so we are considering
small $\rho$ here.

We first prove that
\begin{align}
\lbeq{Gamma1}
    C_{z_*}(x)-C_z(x) & \le {\rm const}  \times
    \begin{cases}
        \rho^{1/2} & (d=3)
        \\
        \rho |\log \rho| & (d=4).
    \end{cases}
\end{align}
By \refeq{CAdelta},
the left-hand side of \refeq{Gamma1} can be written as the Fourier integral
\begin{align}
    C_{z_*}(x)-C_z(x)
    & =
    \int_{\T^d} e^{-ik\cdot x} \left( \frac{1}{1-\hat D(k)} -\frac{1}{1-z\degree\hat D(k)}\right) \frac{dk}{(2\pi)^d}
    \nnb & =
    \degree\rho
    \int_{\T^d} e^{-ik\cdot x} \left( \frac{\hat D(k)}{[1-\hat D(k)] [1-z\degree\hat D(k)]}\right) \frac{dk}{(2\pi)^d}
    .
\lbeq{Cdif}
\end{align}
In the denominator, $1-\hat D(k)$ is bounded below by a multiple of $|k|^2$, while
$1-z\degree \hat D(k) = \degree \rho + z\degree[1-\hat D(k)]$ is bounded below
by a multiple of $\rho + |k|^2$ (since $z\degree$ is bounded below because we are
considering small $\rho$).
The right-hand side of \refeq{Cdif} is therefore bounded above by a multiple of
\begin{align}
    &  \rho \int_{\T^d} \frac{1}{|k|^2 (\rho + |k|^2)} dk
    =
     \rho^{(d-2)/2} \int_{\rho^{-1/2}\T^d} \frac{1}{|l|^2 (1 + |l|^2)} dl.
\end{align}
For $d=3$ this last integral is bounded as $\rho\to 0$,
while for $d=4$ it is $O(|\log \rho|)$.
This proves \refeq{Gamma1}.

For simple random walk, \refeq{Gamma0} continues to hold in dimensions $d=3,4$ \cite{LL10}.
For $d=3$ and $y \neq 0$, by \refeq{Gamma1} we therefore have
\begin{align}
    C_z(y) & = C_{z_*}(y) - (C_{z_*}(y) - C_{z}(y))
    \ge
    a_4\frac{1}{|y|} - c_1 \rho^{1/2},
\end{align}
so, for large $L$, by \refeq{xuub},
\begin{align}
    \sum_{u\in \Z^d: u \neq 0} C_z(x+ru) & \ge
    \sum_{0<|u| \le L} C_{z}(x+ru)
    \ge
    c_0'\sum_{0<|u|\le L} \frac{1}{|ru|} - c_1' \rho^{1/2}L^3
    \nnb & \ge
    c_0'' \frac{1}{r} L^2 - c_1' \rho^{1/2}L^3
    =
    \frac 12 c_0'' \frac{1}{r^3\rho},
\end{align}
where we took $L= c_0''(2c_1' \rho^{1/2} r)^{-1}$ in the last step.
This $L$ is large provided that our assumption $\rho \le c_3'r^{-2}$ holds with
$c_3'$ chosen sufficiently small.  By \refeq{CTZ}, this proves the desired lower
bound \refeq{plateau-srw} for $d=3$.

Finally, for $d=4$, a similar computation gives
\begin{align}
    C_z(y) & \ge
    a_4 \frac{1}{|y|^2} - c_1\rho   | \log\rho  |,
\end{align}
so
\begin{align}
    \sum_{u\in \Z^d: u \neq 0} C_z(x+ru) & \ge
    c_0' \frac{1}{r^2} L^2 - c_1' \rho |\log\rho| L^4
    = c_0' \frac{1}{r^2} L^2 (1- c_1'' \rho |\log \rho| r^2 L^2).
\end{align}
Now we choose
$L^2 = (2c_1''\rho|\log \rho| r^2)^{-1}$
to obtain
\begin{equation}
    \sum_{u\in \Z^d: u \neq 0} C_z(x+ru) \ge c_0'' \frac{1}{r^4 \rho |\log\rho|}.
\end{equation}
The above assumes that $L$ is large, which is true by our assumption that
$\rho|\log \rho| r^2$ is sufficiently small.
This completes the proof.
\end{proof}

\section*{Acknowledgements}

This work was supported in part by NSERC of Canada.
I am grateful to
Emmanuel Michta for many discussions and suggestions, to an anonymous referee
for identifying an error in a previous version of Lemma~\ref{lem:diagram-m}, to
Tyler Helmuth for helpful discussions, and to Timothy Garoni for
providing a preliminary version of \cite{ZGDG20}.


\end{document}